\theoremstyle{plain}
\newtheorem{theorem}{Theorem}
\newtheorem{lemma}{Lemma}
\newtheorem{proposition}{Proposition}
\newtheorem{corollary}{Corollary}
\theoremstyle{definition}
\newtheorem{definition}{Definition}
\newtheorem{example}{Example}
\theoremstyle{remark}
\newtheorem{remark}{Remark}
\DeclareRobustCommand{\gobblefive}[5]{}
\newcommand*{\SkipTocEntry}{\addtocontents{toc}{\gobblefive}}
\definecolor{darktaupe}{rgb}{0.24, 0.08, 0.08}
\definecolor{blueN}{rgb}{0.0, 0.58, 0.79}
\newcommand{\N}{\mathbb N}
\newcommand{\R}{\mathbb R}
\newcommand{\cA}{\mathcal A}
\newcommand{\cF}{\mathcal F}
\newcommand{\cH}{\mathcal H}
\newcommand{\cL}{\mathcal L}
\newcommand{\eps}{\varepsilon}
\title[Gripenberg-like algorithm for the lower spectral radius]{Gripenberg-like algorithm for the lower spectral radius}
\author[N.~Guglielmi, F.~P.~Maiale]{Nicola Guglielmi, Francesco Paolo Maiale}
\address {Nicola Guglielmi\newline \indent
	Gran Sasso Science Institute \newline \indent
	Viale F. Crispi, 7, 67100 L'Aquila, Italy}
\email{nicola.guglielmi@gssi.it}
\address {Francesco Paolo Maiale \newline \indent
	Gran Sasso Science Institute \newline \indent
	Viale F. Crispi, 7, 67100 L'Aquila, Italy}
\email{francescopaolo.maiale@gssi.it}
\begin{document}

\subjclass[2020]{68Q25, 68R10, 68U05}
\keywords{Lower spectral radius, polytope antinorms, Gripenberg's algorithm, invariant cone, polytope algorithm}

\begin{abstract}
This article presents an extended algorithm for computing the lower spectral radius of finite, non-negative matrix sets. Given a set of matrices $\mathcal{F} = \{A_1, \ldots, A_m\}$, the lower spectral radius represents the minimal growth rate of sequences in the product semigroup generated by $\mathcal{F}$. This quantity is crucial for characterizing optimal stable trajectories in discrete dynamical systems of the form $x_{k+1} = A_{i_k} x_k$, where $A_{i_k} \in \mathcal{F}$ for all $k \ge 0$.
For the well-known joint spectral radius (which represents the highest growth rate), a famous algorithm providing suitable lower and upper bounds and able to approximate the joint spectral radius with arbitrary accuracy was proposed by Gripenberg in 1996. For the lower spectral radius, where a lower bound is not directly available (contrarily to the joint spectral radius), this computation appears more challenging.

Our work extends Gripenberg's approach to the lower spectral radius computation for non-negative matrix families. The proposed algorithm employs a time-varying antinorm and demonstrates rapid convergence. Its success is related to the property that the lower spectral radius can be obtained as a Gelfand limit, which was recently proved in Guglielmi and Zennaro (2020). Additionally, we propose an improvement to the classical Gripenberg algorithm for approximating the joint spectral radius of arbitrary matrix sets.
\end{abstract}

\maketitle
\setcounter{tocdepth}{1}
\tableofcontents


\section{Introduction} \label{sec:introduction}

The most important joint spectral characteristics of a set of linear operators are the joint spectral radius and the lower spectral radius. The first one is largely studied while the second one, even if not less important, is less investigated, mainly due to the difficulties connected with its analysis and computation. The joint spectral radius (JSR) of a set of matrices represents the highest possible rate of growth of the products generated by the set. For a single matrix $A$ the JSR is clearly identified by the spectral radius $\rho(A)$. However, for a set of matrices, such a simple characterization is not available, and one has to analyze the product semigroup, where the products have to be considered with no ordering restriction and allowing repetitions of the matrices. The lower spectral radius (LSR) aka subradius of a set of matrices defines instead the lowest possible rate of growth of the products generated by the set. For a single matrix $A$ the LSR still coincides with its spectral radius $\rho(A)$, but for a set of matrices its computation (or approximation) turns out to be even more difficult than for the JSR.

The LSR is a very important stability measure, since it is related to the stabilizability of a dynamical system. In control theory (see, e.g., \cite{JuMa17}) its computation identifies the most stable obtainable trajectories, whose importance is often crucial in terms of the behavior of the controlled system. Moreover, it allows to compute, for example, the lower and upper growth of the Euler partition functions \cite{Eul45,Kru50}, which is relevant to the theory of subdivision schemes and refinement equations - refer to \cite{Pro17} for more details.

The JSR, on the other hand, appears to be an important measure in many applications including discrete switched systems (see, e.g., Shorten et. al.~\cite{MaSh07}), convergence analysis of subdivison schemes, refinement equations and wavelets (see, e.g., Daubechies and Lagarias~\cite{DauLag91,DauLag92}, \cite{MoRe14}), numerical stability analysis of ordinary differential equations (see, e.g., Guglielmi and Zennaro~\cite{GugZen00}), regularity analysis of subdivision schemes \cite{GugPro16}, as well as coding theory and combinatorics, for which we refer to the survey monography by Jungers~\cite{J2009}.

The JSR is largely studied in the literature and there exist several algorithms aiming to compute it. The LSR, instead, is less studied and turns out to be more difficult to compute or even approximate.

In this article, we mainly direct our attention to the approximation of the LSR, but we also consider the JSR. Specifically, we discuss several algorithms, which we call Gripenberg-like, as they extend the seminal work of Gripenberg \cite{Gripenberg1996}. These algorithms allow to estimate, within a specified precision $\delta$, both quantities even in relatively high dimension.

\subsection{Main results of the article} 

The algorithms for computing the LSR and JSR are based on the identification of suitable lower and upper bounds. For any product $P$ of degree $k$ in the product semigroup, $\rho(P)^{1/k}$ provides an upper bound for the LSR and a lower bound for the JSR. Moreover, any matrix norm  of the considered family $\cF$, $\| \cF \| = \max_{1 \le i \le m} \| A_i \|$, provides an upper bound for the JSR. For the LSR the role of the norm is played instead by the so-called antinorm, which - however - can be only defined in a cone. Antinorms are continuous, nonnegative, positively homogeneous and superadditive functions defined on the cone, and turn out to provide a lower bound to the LSR.

The polytope algorithm, introduced in \cite{GP13}, is a commonly used method for computing the LSR of a family of matrices sharing an invariant cone. It involves two key steps, which consist respectively
\begin{enumerate}[label=(\roman*)]
	\item of the identification of a \textit{spectrum lowest product} (s.l.p.), that is a product $P$ of degree $k$ in the multiplicative semigroup, with $\rho(P)^{1/k}$ minimal with respect to all other products;
	\item of the successive construction of a special polytope antinorm for the family of matrices (refer to \cref{sec:invcone_antinorm} and \cref{def:pol_antinorm}). 
\end{enumerate}
For a product $P$ of degree $k$, $\rho(P)^{1/k}$ provides an upper bound for the LSR, while any antinorm of the family $\cF$ provides a lower bound. If these lower and upper bounds coincide, the LSR is computed exactly.

The main contribution of this paper is a Gripenberg-like algorithm (for the classical Gripenberg's algorithm for the JSR, see \cite{Gripenberg1996}) that estimates the LSR within a given precision $\delta>0$ for a specific class of matrix families (finite and satisfying a certain assumption - \cref{def:asympt_rank_one} -, which appears to be generic).

While the existing literature offers robust methods for computing the LSR, such as the polytopic algorithm \cite{GP13}, our approach differs in both aim and methodology. Specifically, unlike the polytopic algorithm, which requires identifying a s.l.p. and then constructing an optimal extremal polytope antinorm, our algorithm:
\begin{itemize}
	\item guarantees convergence from any initial polytope antinorm;
	\item aims to compute the smallest possible set of products to estimate the LSR, thus reducing computational cost.
\end{itemize}

We also introduce an adaptive procedure (based on the polytopic algorithm) to iteratively refine the initial antinorm, significantly improving both the rate and accuracy of convergence. This variant, discussed in \cref{subsec:algorithm_enhanced}, performs better in our simulations, especially in high-dimensional cases, as shown in \cref{sec:random}.

\subsection{Outline of the article} 

In \cref{sec:preliminaries}, we discuss all the preliminary material necessary for computing the lower spectral radius.

In \cref{sec:algorithm}, we present the Gripenberg-like algorithm for the computation of the LSR and in \cref{subsec:proof_main} we prove our main result. Subsequently, we discuss theoretical improvements: an adaptive procedure (\cref{subsec:algorithm_enhanced}) and a technique based on perturbation theory (\cref{subsec:perturbations}) to extend the considered class of problems.

In \cref{sec:numerical_implementation}, we detail the numerical implementation of our algorithm and its adaptive variants. Further considerations on these adaptive variants, as well as antinorm calculations, are provided in the appendix.

In \cref{sec:il_ex}, we present two illustrative examples that demonstrate both the advantages and critical aspects of our algorithm. In addition, we propose alternative strategies and analyze the corresponding improvements in LSR computation.

In \cref{sec:numerical_applications}, we discuss some applications to number theory and probability, that have already been investigated in \cite{GP13}, for a comparative analysis.

In \cref{sec:random}, we present data collected from applying our algorithms to randomly generated matrix families both full and sparse with varying sparsity densities.

In \cref{sec:conclusion}, we provide concluding remarks and briefly discuss an improved version of Gripenberg's algorithm for the JSR, which is described in \cref{subsec:adaptive_norm_algorithm}.

\subsection*{Outline of the appendix} 

The appendix provides supplementary information on polytope antinorm calculations, numerical implementations of adaptive algorithms, and an illustrative output. It is structured as follows:

\begin{itemize}
	\item \Cref{supplement_sec:comp_antinorm} discusses an efficient numerical implementation for computing \textbf{polytope antinorms}, while \cref{supplementsec:pruning_vertices} outlines the vertex set pruning procedure required in adaptive algorithms.
	
	\item The numerical implementation of adaptive algorithms, Algorithm \textbf{(A)} and Algorithm \textbf{(E)}, is described and commented in \cref{supplement_sec:adaptive_alg}.
	
	\item \Cref{supplement:simulation} provides a detailed numerical output from the first two steps of Algorithm \textbf{(A)} applied to the illustrative example introduced in \cref{subsec:il_ex}.
	
\end{itemize}

\section{Lower spectral radius}
\label{sec:preliminaries}

Consider a finite family $\cF = \{A_1, \ldots, A_m\}$ of $d \times d$ real-valued matrices. For each $k \in \N$, we define the set $\Sigma_k(\cF)$, containing all possible products of degree $k$ generated from the matrices in $\cF$, as follows:
\begin{equation} \label{eq:Sigmak}
	\Sigma_k(\cF) := \left\{ \ \prod_{j=1}^k A_{i_j} \: : \: i_j \in \{1,\ldots,m\} \text{ for every } j = 1,\ldots,k \right\}.
\end{equation}
Given a norm $\| \cdot \|$ on $\R^d$ and the corresponding induced matrix norm, the \textit{lower spectral radius} (LSR) of $\cF$ is defined as (see \cite{Gurvits1995}):
\begin{equation}\label{eq:lsr_def}
	\check{\rho}(\cF) := \lim_{k\to+\infty} \min_{P \in \Sigma_k(\cF)} \| P \|^{1/k}.
\end{equation}
On the other hand, if we set
\[
\bar{\rho}_k(\cF) := \min_{P \in \Sigma_k(\cF)} \rho(P)^{1/k},
\]
then (see, for example, \cite{BochiMorris2015} and \cite{Czornik2005}) we have the following equality:
\[
\check{\rho}(\cF) = \inf_{k \ge 1} \bar{\rho}_k(\cF).
\]
Thus, the spectral radius of a product provides an upper bound for the LSR in our algorithm. However, we lack a corresponding lower bound. To address this, we assume that $\cF$ shares an invariant cone $K$ (e.g., $\R_+^d$). Under this assumption, we introduce in \cref{subsec:antinorms_prop_ex} the notion of an \textit{antinorm} defined on a convex cone $K \subset \R^d$, which will be the tool to obtain a lower bound (in the same way as norms provide upper bounds to the JSR). We now introduce an important quantity for our analysis:

\begin{definition} \label{def:spm}
	A product $\Pi \in \Sigma_k$ is a \textbf{spectral lowest product} (s.l.p.) if
	\[
	\check \rho(\cF) = \rho(\Pi)^{1/k}.
	\]
\end{definition}

The lower spectral radius determines the asymptotic growth rate of the minimal product of matrices from $\cF$. This notion, first defined in \cite{Gurvits1995}, has diverse applications across mathematics, including combinatorics, and number theory. For a comprehensive overview, see \cite{J2009} and the references therein.

\subsection{Invariant cones} \label{sec:invcone_antinorm}

In this section, we recall the notion of proper cone, a well-established topic in the literature (see, for example, the papers \cite{Rodman2010, Schneider2014,Tam2001}).

\begin{definition}
	A proper cone $K$ of $\R^d$ is a nonempty closed and convex set which is:
	\begin{enumerate}[label=(\arabic*)]
		\item positively homogeneous, i.e. $\R_+ K \subseteq K$;
		\item salient, i.e. $K \cap -K = \{0\}$;
		\item solid, i.e. $\mathrm{span}(K) = \R^d$.
	\end{enumerate}
	In addition, the dual cone is defined as 
	\[
	K^\ast := \left\{ y \in \R^d \: : \: x^T y \ge 0 \text{ for all } x \in K \right\}.
	\]
\end{definition}

\begin{definition}
	Let $A \in \R^{d,d}$. A proper cone $K$ is \textbf{invariant} for $A$ if 
	\[
	A(K) \subseteq K,
	\]
	and \textbf{strictly invariant} if 
	\[
	A\left(K\setminus \{0\}\right) \subseteq \mathrm{int}(K).
	\]
	In particular, $K$ is (strictly) invariant for the family $\cF = \{A_1,\ldots,A_m\}$ if it is (strictly) invariant for each matrix $A_i \in \cF$.
\end{definition}

\begin{proposition}[\cite{GZ21}]\label{prop:inv_cone}
	A cone $K$ is (strictly) invariant for $\cF$ if and only if the dual $K^\ast$ is (striclty) invariant for $\cF^T = \{A_1^T,\ldots,A_m^T\}$.
\end{proposition}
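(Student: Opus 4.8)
The plan is to prove the two directions of the equivalence simultaneously by exploiting the duality relation $(A(K))^\ast$ versus $A^T(K^\ast)$ together with the fact that a proper cone satisfies $(K^\ast)^\ast = K$. First I would record the standard order-reversing property: for proper cones $C_1 \subseteq C_2$ one has $C_2^\ast \subseteq C_1^\ast$, and $(K^\ast)^\ast = K$. Then the key algebraic identity to establish is that, for a single matrix $A$ and a proper cone $K$, the inclusion $A(K) \subseteq K$ is equivalent to $A^T(K^\ast) \subseteq K^\ast$. To see one direction, assume $A(K) \subseteq K$; pick $y \in K^\ast$ and $x \in K$. Then $x^T (A^T y) = (Ax)^T y \ge 0$ because $Ax \in K$ and $y \in K^\ast$; since $x \in K$ was arbitrary, $A^T y \in K^\ast$. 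The converse follows by the same computation applied to $A^T$ and $K^\ast$, using $(K^\ast)^\ast = K$ to return to the original statement. Applying this to every $A_i$ in turn gives the (non-strict) equivalence for the family $\cF$.

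Next I would handle the strict case. Here the natural reformulation is in terms of interiors and the strict pairing: $x \in \mathrm{int}(K)$ if and only if $x^T y > 0$ for every $y \in K^\ast \setminus \{0\}$ (a standard fact for proper cones, which I would cite from the same references on proper cones). Assuming $A(K \setminus \{0\}) \subseteq \mathrm{int}(K)$, take $y \in K^\ast \setminus \{0\}$; I must show $A^T y \in \mathrm{int}(K^\ast)$, i.e. $x^T(A^T y) > 0$ for all $x \in K \setminus \{0\}$. For such $x$, $Ax \in \mathrm{int}(K)$, hence $(Ax)^T y = x^T(A^T y) > 0$ by the interior characterization applied to $Ax$ and $y \ne 0$. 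Thus $A^T y \in \mathrm{int}(K^\ast)$. The reverse implication is again symmetric via bidualization, and passing to the family is immediate. Combining the two cases yields \cref{prop:inv_cone}.

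The main obstacle is not the order-reversing manipulation, which is routine, but making sure the interior characterization of the dual cone is available in exactly the form needed — namely $x \in \mathrm{int}(K) \iff \langle x, y\rangle > 0 \ \forall y \in K^\ast \setminus\{0\}$ — and that it is legitimate to apply it with the roles of $K$ and $K^\ast$ swapped. This requires $K^\ast$ to itself be a proper cone, which holds precisely because $K$ is proper (solidity of $K$ gives salience of $K^\ast$, and salience of $K$ gives solidity of $K^\ast$); I would state this as a preliminary observation. A secondary point to be careful about is that strict invariance should imply (non-strict) invariance as a sanity check, and that $A(K\setminus\{0\}) \subseteq \mathrm{int}(K)$ together with $A(0)=0$ is genuinely the hypothesis being dualized. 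With these facts in place the argument is short; since the statement is attributed to \cite{GZ21}, I would keep the proof brief or simply refer to that source.
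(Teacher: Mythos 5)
Your argument is correct: the non-strict equivalence via $\langle Ax,y\rangle=\langle x,A^Ty\rangle$ and biduality, and the strict case via the characterization $x\in\mathrm{int}(K)\iff \langle x,y\rangle>0$ for all $y\in K^\ast\setminus\{0\}$ (legitimate to swap roles since $K^\ast$ is again proper), is the standard duality proof. Note that the paper itself gives no proof of \cref{prop:inv_cone} — it is stated as a known result and deferred entirely to \cite{GZ21} — so there is nothing to compare against beyond observing that your self-contained argument is sound and complete.
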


\begin{remark} \label{rmk:self_dual_cone}
	The cone $K = \R_+^d$ is self-dual. Thus, it is strictly invariant for $\cF$ if and only if it is strictly invariant for $\cF^T$.
\end{remark}

We now turn our attention to \textit{asymptotically rank-one matrices}. First, we briefly recall the spectral decomposition, adopting the notation used in \cite{BrunduZennaro2018,BrunduZennaro2019AMP,BrunduZennaro2019JCA}:

Let $\lambda \in \mathbb C$ be an eigenvalue of $A$ with algebraic multiplicity $k$. Denote by $\widetilde V_\lambda$ and $\widetilde W_\lambda$ respectively the associated eigenspace and the generalized eigenspace:
\[
\widetilde W_\lambda := \operatorname{Ker} \left(( A - \lambda I)^k \right) \supseteq  \operatorname{Ker} \left( A - \lambda I \right) =: \widetilde V_\lambda.
\]
If $\lambda \in \mathbb R$, then $W_\lambda := \widetilde W_\lambda \cap \R^d$ and $V_\lambda := \widetilde V_\lambda \cap \R^d$ are linear subspaces of $\mathbb R^d$, invariant under the action of $A$. On the other hand, when $\lambda \notin \mathbb R$, we define
\[
U_{\R}(\lambda, \bar \lambda) := \left( \widetilde W_\lambda \oplus \widetilde W_{\bar \lambda} \right) \cap \R^d.
\]
Since $\lambda$ and $\bar \lambda$ are eigenvalues with the same multiplicity, it is easy to check that $U_\R$ is a linear subspace of $\mathbb R^d$ invariant under the action of $A$.

\begin{remark}
	Assume $\rho(A)>0$ and there is a leading eigenvalue $\lambda_1 \in \mathbb R$. Then the spectral theorem yields a decomposition of the space:
	\[
	\R^d = W_A \oplus H_A,
	\]
	where $W_A := W_{\lambda_1}$ is the generalized eigenspace of $\lambda_1$, and $H_A$ is defined as
	\begin{equation}\label{eq:secondary_eig}
		H_A := \left( \bigoplus_{i=2}^r W_{\lambda_i} \right) \oplus \left( \bigoplus_{i=1}^s U_{\mathbb{R}}(\mu_i,\bar \mu_i)\right),
	\end{equation}
	where $\lambda_1,\lambda_2,\ldots,\lambda_r \in \R$ are the real eigenvectors of $A$, while $(\mu_1, \bar\mu_1),\ldots,(\mu_s,\bar\mu_s)$ the complex conjugate pairs.
\end{remark}

\begin{definition}
	$A \in \mathbb R^{d, d}$ is \textbf{asymptotically rank-one} if:
	\begin{enumerate}[label=(\roman*)]
		\item $\rho(A)>0$ and either $\rho(A)$ or $-\rho(A)$ is a simple eigenvalue;
		\item $|\lambda| < \rho(A)$ for any other eigenvalue of $A$.
	\end{enumerate}
\end{definition}

In particular, the leading generalized eigenspace $W_A$ of an asymptotically rank-one matrix is the one-dimensional line $V_{\rho(A)}$.

\begin{definition} \label{def:asympt_rank_one}
	Let $\cF = \{A_1,\ldots,A_m\}$ be a family of real-valued matrices and consider the set of all possible products of any degree:
	\[
	\Sigma(\cF) := \bigcup_{k \ge 1} \Sigma_k(\cF)
	\]
	We say that $\cF$ is \textbf{asymptotically rank-one} if all products $P \in \Sigma(\cF)$ are so.
\end{definition}

\begin{remark}
	It is important to note that even if all matrices $A_1,\ldots,A_m$ are asymptotically rank-one, the same may not be true for their products.
\end{remark}

The following result, which includes the well-known \textit{Perron-Frobenius theorem}, relates invariant cones and asymptotic rank-one matrices:

\begin{theorem}\label{thm.cones}
	If $A \in \R^{d,d}$ and $K$ is an invariant cone for $A$, then:
	\begin{enumerate}[label=(\arabic*)]
		\item The spectral radius $\rho(A)$ is an eigenvalue of $A$.
		\item The cone $K$ contains an eigenvector $v_A$ corresponding to $\rho(A)$.
		\item The intersection $\operatorname{int}(K) \cap H_A$ is empty.
	\end{enumerate}
	If, in addition, $K$ is a strictly invariant cone for $A$, the following holds:
	\begin{enumerate}[label=(\arabic*)]
		\setcounter{enumi}{3}
		\item The matrix $A$ is asymptotically rank-one.
		\item The unique eigenvector of $A$ which belongs to the interior of $K$ is $v_A$.
		\item The intersection $K \cap H_A$ is empty.
	\end{enumerate}
\end{theorem}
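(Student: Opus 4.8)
The plan is to establish the six claims in order, since the later ones build on the earlier ones. For claims (1) and (2), I would invoke the classical Perron--Frobenius--Krein--Rutman theory: since $K$ is a proper cone invariant under $A$, the spectral radius $\rho(A)$ is attained on the spectrum by an eigenvalue, and a corresponding eigenvector can be chosen in $K$. Concretely, one can argue via the Brouwer fixed point theorem applied to the continuous map $x \mapsto Ax/\|Ax\|$ on the compact convex base $K \cap \{ \,\phi(x)=1\,\}$ for a strictly positive functional $\phi \in \operatorname{int}(K^\ast)$ (which exists since $K$ is solid and salient), provided $A$ does not annihilate the base; the degenerate case $Ax=0$ for some $x$ in the base is handled separately and forces $0$ to be an eigenvalue with eigenvector in $K$. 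A slicker route, if we are willing to cite it, is simply to reference the Krein--Rutman theorem directly. The spectral radius statement then follows because any eigenvalue of modulus $\rho(A)$ would have to be $\rho(A)$ itself given that the eigenvector lies in the salient cone $K$ (a genuinely complex eigenvalue cannot have an eigenvector in $K\setminus\{0\}$, and $-\rho(A)$ with eigenvector $v$ would give $Av = -\rho(A)v \notin K$).

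For claim (3), that $\operatorname{int}(K) \cap H_A = \emptyset$, I would argue by contradiction: suppose $w \in \operatorname{int}(K) \cap H_A$. Since $H_A$ is the $A$-invariant complement associated to the eigenvalues of modulus strictly less than $\rho(A)$ (or at least not equal to the leading simple real eigenvalue in the general spectral decomposition), the orbit $A^n w / \rho(A)^n \to 0$, whereas projecting onto the leading eigendirection would normally produce the Perron eigenvector; the point is that $w$ having zero component along $W_A$ means $\|A^n w\|^{1/n} < \rho(A)$ strictly. Now pair this with a leading eigenvector $y \in K^\ast$ of $A^T$ for the eigenvalue $\rho(A)$, which exists by \cref{prop:inv_cone} applied to $A^T$ on $K^\ast$: then $y^T A^n w = \rho(A)^n\, y^T w$, so $y^T w = 0$. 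But $w \in \operatorname{int}(K)$ and $y \in K^\ast \setminus \{0\}$ force $y^T w > 0$, a contradiction. (The strict positivity $y^Tw>0$ for $w$ interior and $y$ a nonzero dual vector is the standard separation fact for proper cones.)

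Now assume $K$ is strictly invariant. For claim (6), $K \cap H_A = \emptyset$ (meaning $(K\setminus\{0\}) \cap H_A = \emptyset$): if $0 \ne w \in K \cap H_A$, then strict invariance gives $Aw \in \operatorname{int}(K)$, and $Aw \in H_A$ since $H_A$ is $A$-invariant, contradicting claim (3). For claim (4), asymptotic rank-one: we must show $\rho(A)$ (or $-\rho(A)$) is a simple eigenvalue and dominates strictly. The dominance $|\lambda|<\rho(A)$ for all other eigenvalues is exactly the statement that the only eigenvalue on the critical circle is the leading real one — any other eigenvalue's (generalized) eigenspace lies inside $H_A$, and an eigenvector there would have to meet $K$ only at $0$ by claim (6), which is automatic, so the real content is ruling out a second eigenvalue \emph{of modulus} $\rho(A)$; such an eigenvalue, say $\rho(A)e^{i\theta}$ with $\theta \ne 0$, or $-\rho(A)$, would contribute to $H_A$ a two-dimensional (rotation) or one-dimensional subspace on which $A^n$ does not decay relative to $\rho(A)^n$, and one derives a contradiction with claim (3) by an averaging/Cesàro argument: $\frac1N\sum_{n<N} A^n w/\rho(A)^n$ applied to a suitable $w\in\operatorname{int}(K)$ would converge into $H_A\cap K$, or more directly one shows the Perron eigenvector $v_A$ would fail to be in the interior. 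Simplicity of $\rho(A)$ likewise follows: a Jordan block of size $\ge 2$ for $\rho(A)$, or a two-dimensional eigenspace, produces a vector in $W_A$ whose normalized orbit leaves $\operatorname{int}(K)$ or yields two independent eigenvectors in $K$ whose difference lies in $\partial K$ while still being an eigenvector, contradicting claim (5). Finally claim (5), uniqueness of the interior eigenvector: if $v, v' \in \operatorname{int}(K)$ are both eigenvectors, necessarily for $\rho(A)$ by the salient-cone argument above, then for suitable $t>0$ the vector $v - t v'$ lies in $\partial K$ (shrink $t$ from $+\infty$) and is still an eigenvector for $\rho(A)$ with $Av-tAv' = \rho(A)(v-tv')$; but then $A$ maps this boundary point into $\operatorname{int}(K)$ by strict invariance, contradiction unless $v - tv' = 0$, i.e. $v' $ is a positive multiple of $v$.

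I expect the main obstacle to be claim (4) — specifically, cleanly ruling out a \emph{second} eigenvalue of modulus $\rho(A)$ (the peripheral spectrum) using only the strict-invariance hypothesis, since this is where one must pass from "$H_A$ avoids the interior" to "$H_A$ avoids $K$ entirely and moreover contributes strictly subdominant growth." The Cesàro/averaging trick or a careful use of the dual Perron eigenvector of $A^T \in \operatorname{int}(K^\ast)$-type positivity is the technical heart; everything else is either classical Perron--Frobenius or a short boundary-point argument exploiting $A(K\setminus\{0\})\subseteq\operatorname{int}(K)$.
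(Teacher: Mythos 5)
First, note that the paper does not prove \cref{thm.cones} at all: it is imported verbatim from Brundu--Zennaro \cite{BrunduZennaro2018} (Theorems 4.7, 4.8 and 4.10), so there is no internal proof to match your plan against; what matters is whether your blind argument is actually complete. Much of it is sound: (2) via Krein--Rutman/Brouwer, (6) as an immediate corollary of (3) plus strict invariance, and (5) via the boundary-crossing argument $v - t^\ast v'\in\partial K$ are all correct in outline. But there are two genuine gaps. The smaller one is in (3): your intermediate claim that $w\in H_A$ forces $\|A^n w\|^{1/n}<\rho(A)$ strictly is false at that stage, because without strict invariance $H_A$ may contain peripheral eigenvalues of modulus $\rho(A)$ (e.g.\ $A=\bigl(\begin{smallmatrix}0&1\\1&0\end{smallmatrix}\bigr)$ on $\R_+^2$, where $H_A=\mathrm{span}\{(1,-1)^T\}$ and $\|A^nw\|$ does not decay relative to $\rho(A)^n$). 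The conclusion $y^Tw=0$ you want is still true, but it should be derived directly: the left Perron eigenvector $y$ of $A^T$ satisfies $y^T(A-\lambda I)^k v=(\rho(A)-\lambda)^k\,y^Tv$, so $y$ annihilates every generalized eigenspace $W_\lambda$ with $\lambda\neq\rho(A)$ and hence all of $H_A$; no growth estimate is needed.

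The serious gap is exactly where you predicted it: claim (4), ruling out a second eigenvalue of modulus $\rho(A)$. The Ces\`aro device you propose does not produce a contradiction --- averaging $A^nw/\rho(A)^n$ kills the oscillating peripheral contribution and the average converges to the $W_A$-component of $w$, which lies in $W_A$, not in $H_A\cap K$, so nothing contradicts (3). A working argument, in the spirit of your boundary-point idea, is: for a putative eigenvalue $-\rho(A)$ with eigenvector $u\in H_A$, since $v_A\in\operatorname{int}(K)$ and $u\notin K$ (by (6)) the number $T:=\sup\{t>0: v_A+tu\in K\}$ is finite and positive, $w:=v_A+Tu\in\partial K\setminus\{0\}$, and $A(v_A-Tu)=\rho(A)\,w$, while $v_A-Tu=\rho(A)^{-1}Aw\in\operatorname{int}(K)\setminus\{0\}$ by strict invariance; applying strict invariance once more puts $\rho(A)w$ in $\operatorname{int}(K)$, contradicting $w\in\partial K$. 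For a complex peripheral eigenvalue $\rho(A)e^{i\theta}$ one takes the invariant real $2$-plane $P\subset H_A$, observes that $C:=\{p\in P: v_A+p\in K\}$ is a compact convex neighbourhood of $0$ in $P$ (compact because $P\cap K=\{0\}$ by (6)), and that $\rho(A)^{-1}A$ acts on $C$ as a Euclidean rotation mapping $C$ into its relative interior --- impossible, since an isometry preserves area while a compact subset of the relative interior of $C$ has strictly smaller area than $C$. Simplicity of $\rho(A)$ needs a separate argument too (e.g.\ a size-$\ge 2$ Jordan block gives $p:=v_A-\varepsilon u\in K$ with $A^np/(\varepsilon n\rho(A)^{n-1})\to -v_A$, forcing $-v_A\in K$ and contradicting saliency); your sketch ("the normalized orbit leaves $\operatorname{int}(K)$") points in the right direction but is not yet a proof. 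Until these pieces are supplied, claim (4) --- and with it the part of the theorem the paper actually relies on --- is not established.
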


For the proof, refer to \cite[Theorem 4.7, Theorem 4.8 and Theorem 4.10]{BrunduZennaro2018}.

\subsection{Antinorms} \label{subsec:antinorms_prop_ex}

Assume that the family $\cF = \{A_1,\ldots,A_m\}$ shares a common invariant cone $K$. First, we recall the definition of antinorm (\cite{M90}):

\begin{definition}
	An {\itshape antinorm} is a nontrivial continuous function $a(\cdot)$ defined on $K$ satisfying the following properties: \mbox{}
	\begin{enumerate}
		\item[$(a1)$] \textit{non-negativity}: $a(x) \ge 0$ for all $x \in K$;
		\item[$(a2)$] \textit{positive homogeneity}: $a(\lambda x) = \lambda a(x)$ for all $\lambda \ge 0$ and $x \in K$;
		\item[$(a3)$] \textit{superadditivity}: $a(x+y) \ge a(x) + a(y)$ for all $x,y \in K$.
	\end{enumerate}
\end{definition}

\begin{remark}
	By $(a2)$ and $(a3)$, any antinorm $a(\cdot)$ is concave and thus continuous in the interior of the cone $\operatorname{int}(K)$.
\end{remark}

\begin{definition}
	For an antinorm $a(\cdot)$ on $K$, its \textbf{unit antiball} is defined as
	\[
	\cA:= \{x \in K \: : \: a(x) \ge 1\},
	\]
	and its corresponding \textbf{unit antisphere} is $\cA' := \{x \in K \: : \: a(x) = 1\}$.
\end{definition}

\begin{remark}
	Since $a(\cdot)$ is concave, the unit antiball $\cA$ is convex.
\end{remark}

For a given norm $\| \cdot \|$, \cite[Proposition 4.3]{GZ21} proves that even for $a(\cdot)$ discontinuous on $\partial K$, there is $\beta>0$ such that $a(x) \le \beta \| x \|$ for all $x \in K$. However, establishing a lower bound requires an additional assumption on $a(\cdot)$, namely \textit{positivity}:

\begin{definition}
	We say that $a(\cdot)$ is positive if $a(x) > 0$ for all $x \in K \setminus \{0\}$.
\end{definition}

\begin{proposition}
	Let $a(\cdot)$ be a positive antinorm on $K$ and $\| \cdot \|$ be any norm on $\R^d$. Then there are $\beta,\gamma>0$ such that
	\[
	\beta^{-1} a(x) \le \|x\| \le\gamma a(x) \qquad \text{for every } x \in K.
	\]
	In particular, the unit antisphere $\cA'$ is compact.
\end{proposition}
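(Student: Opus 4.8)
The plan is to reduce both inequalities to the compactness of the \emph{unit-sphere slice of the cone},
\[
S := \{x \in K : \|x\| = 1\} = K \cap \{x \in \R^d : \|x\| = 1\},
\]
which is compact because $K$ is closed and the unit sphere of $(\R^d,\|\cdot\|)$ is compact, together with the extreme value theorem for the continuous function $a$ restricted to $S$. The left inequality $\beta^{-1} a(x) \le \|x\|$ is the easy half and does not use positivity: it is exactly the content of \cite[Proposition 4.3]{GZ21} recalled just above. If one prefers a self-contained argument, since $a$ is continuous on all of $K$ one may simply set $\beta := \max_{x \in S} a(x) \in (0,\infty)$ — finite by compactness of $S$ and positive since $a$ is nontrivial — and then extend to all of $K$ by positive homogeneity $(a2)$.

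For the right inequality, where the positivity hypothesis is essential, I would put $\mu := \min_{x \in S} a(x)$; this minimum is attained since $S$ is compact and $a$ is continuous, and it is strictly positive because $0 \notin S$ and $a(x) > 0$ for every $x \in K \setminus \{0\}$. Given any $x \in K \setminus \{0\}$, writing $x = \|x\| \cdot (x/\|x\|)$ with $x/\|x\| \in S$ and invoking $(a2)$ gives $a(x) = \|x\|\,a(x/\|x\|) \ge \mu \|x\|$, i.e.\ $\|x\| \le \gamma a(x)$ with $\gamma := \mu^{-1}$; the case $x = 0$ is trivial. For the compactness of $\cA'$: it is closed in $\R^d$ because $K$ is closed and $a$ is continuous on $K$ (a limit of points with $a$-value $1$ lies in $K$ and again has $a$-value $1$), and it is bounded because every $x \in \cA'$ satisfies $\|x\| \le \gamma a(x) = \gamma$; Heine--Borel then yields compactness.

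The only load-bearing point is the strict inequality $\mu > 0$, and it is precisely here that positivity cannot be dropped: without it, $a$ could vanish at a boundary point of $K$ lying on the unit sphere, so that $\inf_{x \in S} a(x) = 0$, no finite constant $\gamma$ would work, and correspondingly $\cA'$ would fail to be bounded. Everything else is routine homogeneity-and-compactness bookkeeping.
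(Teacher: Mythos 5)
Your proof is correct, and it is the standard homogeneity-plus-compactness argument; the paper itself does not prove this proposition but defers entirely to \cite[Proposition 4.3]{GZ21}, so there is no internal proof to diverge from. Both inequalities and the compactness of $\cA'$ are handled cleanly, and you correctly isolate positivity as the hypothesis that makes $\mu=\min_{x\in S}a(x)$ strictly positive. The one point worth flagging: your self-contained argument for \emph{both} bounds invokes the extreme value theorem on $S=K\cap\{\|x\|=1\}$, which requires $a$ to be continuous up to $\partial K$. This is granted by the paper's definition of antinorm, so your proof is valid in this setting; but the paper explicitly remarks that concavity only yields continuity on $\operatorname{int}(K)$ and that \cite[Proposition 4.3]{GZ21} establishes the upper bound $a(x)\le\beta\|x\|$ \emph{even for $a$ discontinuous on $\partial K$}. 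In that more general setting your maximum over $S$ need not be attained (and for the lower bound, a pointwise-positive but boundary-discontinuous $a$ could have $\inf_{x\in S}a(x)=0$), which is precisely why the cited reference needs a more careful argument than compactness bookkeeping. As a proof of the proposition as stated --- with continuity built into the definition --- yours is complete.
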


See \cite[Proposition 4.3]{GZ21} for a precise statement and the proof. For finite computability, the following class of antinorms is crucial:

\begin{definition}\label{def:pol_antinorm}
	An antinorm $a(\cdot)$ on a cone $K$ is a \textbf{polytope antinorm} if its unit antiball $\cA$ is a positive infinite polytope. In other words, there exists a set
	\[
	V := \{ v_1, \ldots, v_p \} \subset K \setminus \{0\},
	\]
	minimal and finite of vertices such that:
	\begin{enumerate}[label=(\roman*)]
		\item $a(v_i)=1$ for every $i=1,\ldots,p$;
		\item $\cA= \operatorname{conv}(V)+ K$, where $\operatorname{conv}(V)$ denotes the convex hull of $V$.
	\end{enumerate}
\end{definition}

\begin{remark}
	The term ``polytope'' is used loosely, as the unit antiball $\cA$ might reside in a non-polyhedral cone $K$.
\end{remark}

\subsection*{Examples of antinorms}
Throughout this paper, we focus on matrix families $\cF = \{A_1,\ldots,A_m\}$ with all non-negative entries. This guarantees that $\R_+^d$ is an invariant cone, allowing the use of $p$-antinorms - for more details, refer to \cite{M90}.

\begin{definition}
	Let $K = \mathbb R^d_+$. For $p\le 1$ and $p \neq 0$, we define the $p$-antinorm:
	\[
	a_p(x) := \left(\sum_{i=1}^d x_i^p \right)^{1/p}.
	\]
	Moreover, by letting $p \to - \infty$ we define the antinorm $a_{-\infty}(x) := \min_{1\le i \le d}x_i$.
\end{definition}

For $p \ge 1$, this formula characterizes the standard $p$-norm restricted to the positive orthant $\mathbb{R}^d_+$. As a special case, the linear functional
\[
a_1 : \R_+^d \ni x \longmapsto \sum_{i=1}^d x_i \in \R_{+}
\]
is both a norm and antinorm, called the \textbf{1-antinorm}. See \cref{rmk:choice_1_antinorm} for additional considerations regarding its use in the numerical implementation.

\subsection{Matrix antinorms and Gelfand's limit} \label{sec:gelfand_limit} 

Let $K$ be a cone and $\cL(K)$ the set of all $d \times d$ real matrices for which $K$ is invariant. Then:
\[
\mathrm{int}(\cL(K)) = \left\{ A \in \cL(K) \: : \: K \text{ is strictly invariant for } A \right\}.
\]
Analogous to the well-known notion of operator norms for matrices, we introduce \textit{operator antinorms}:

\begin{definition}
	For any antinorm $a(\cdot)$ on $K$ and $A \in \cL(K)$, the \textbf{operator antinorm} of $A$ is defined as follows:
	\[
	a(A) := \inf \left\{ a(Ax) \: : \: x \in K \text{ with } a(x)=1 \right\} = \inf_{x \in \cA'} a(Ax).
	\]
\end{definition}

\begin{theorem}
	Let $a(\cdot)$ be an antinorm on $K$. Then the functional
	\[
	\cL(K) \ni A \longmapsto a(A) \in \R_{\ge 0}
	\]
	is an antinorm on $\cL(K)$, possibly discontinuous at the boundary $\partial \cL(K)$. Moreover, for all $A,B \in \cL(K)$ it holds:
	\begin{equation} \label{eq:supermul}
		a(AB) \ge a(A) a(B).
	\end{equation}
\end{theorem}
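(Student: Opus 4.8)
The plan is to verify the three antinorm axioms $(a1)$--$(a3)$ for the functional $A \mapsto a(A)$ on $\cL(K)$, and then to establish the supermultiplicativity inequality \eqref{eq:supermul}. Throughout, the key structural facts are: $\cL(K)$ is itself a proper cone in the space $\R^{d,d}$ (it is closed under addition and nonnegative scaling, since if $K$ is invariant for $A$ and $B$ then $A(K)+B(K)\subseteq K$ and $(A+B)(K)\subseteq A(K)+B(K)\subseteq K$ by convexity of $K$; salience and solidity are inherited from $K$ being proper); and that on the underlying cone $K$ the function $a(\cdot)$ is superadditive and positively homogeneous.

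First I would handle $(a1)$: for $A\in\cL(K)$ and any $x\in K$ with $a(x)=1$ we have $Ax\in K$, hence $a(Ax)\ge 0$ by $(a1)$ for $a(\cdot)$ on $K$, and taking the infimum over $\cA'$ gives $a(A)\ge 0$. Next, $(a2)$: for $\lambda\ge 0$, $a((\lambda A)x)=a(\lambda(Ax))=\lambda a(Ax)$ by positive homogeneity of $a(\cdot)$, so $\inf_{x\in\cA'}a((\lambda A)x)=\lambda\inf_{x\in\cA'}a(Ax)$, i.e. $a(\lambda A)=\lambda a(A)$. The slightly more substantial axiom is $(a3)$, superadditivity in the matrix argument: for $A,B\in\cL(K)$ and $x\in\cA'$, both $Ax$ and $Bx$ lie in $K$, so superadditivity of $a(\cdot)$ gives $a((A+B)x)=a(Ax+Bx)\ge a(Ax)+a(Bx)\ge a(A)+a(B)$, where the last step uses that $a(Ax)\ge a(A)$ and $a(Bx)\ge a(B)$ for every $x\in\cA'$. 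Taking the infimum over $x\in\cA'$ yields $a(A+B)\ge a(A)+a(B)$. For nontriviality one notes that $a(\cdot)$ on $\cL(K)$ is not identically zero, e.g. the identity matrix satisfies $a(I)=\inf_{x\in\cA'}a(x)=1$ when $a$ is normalized (or more generally $a(I)>0$ when $a$ is positive on $\mathrm{int}(K)$, since $\cA'$ is then compact by the preceding proposition). The possible discontinuity at $\partial\cL(K)$ is not something to be proved but simply flagged, mirroring the corresponding phenomenon for $a(\cdot)$ on $\partial K$ discussed earlier in the excerpt; concavity plus the finiteness argument only gives continuity in the interior.

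Finally, for \eqref{eq:supermul}, the argument is a two-step application of the infimum characterization. Fix $A,B\in\cL(K)$ and an arbitrary $x\in\cA'$. If $a(Bx)=0$ the inequality $a(ABx)\ge a(A)a(Bx)=0$ is immediate from $(a1)$. Otherwise set $y := Bx/a(Bx)\in K$; by positive homogeneity $a(y)=1$, so $y\in\cA'$ and hence $a(Ay)\ge a(A)$ by definition of the operator antinorm. Then
\[
a(ABx) = a\bigl(a(Bx)\,Ay\bigr) = a(Bx)\,a(Ay) \ge a(Bx)\,a(A) \ge a(B)\,a(A),
\]
where the last inequality uses $a(Bx)\ge a(B)$. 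Since $x\in\cA'$ was arbitrary, taking the infimum over $\cA'$ gives $a(AB)=\inf_{x\in\cA'}a(ABx)\ge a(A)a(B)$. The only genuine subtlety — the main obstacle, such as it is — lies in the edge cases where an infimum is zero or is not attained (i.e. when $a$ is not positive, so $\cA'$ need not be compact and $Bx$ may lie on $\partial K$); these are handled by the explicit case split above, and otherwise the proof is a routine unwinding of the definitions.
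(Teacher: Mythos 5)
Your proposal is correct: the verification of $(a1)$--$(a3)$ for the operator antinorm and the two-step rescaling argument (passing to $y=Bx/a(Bx)\in\cA'$, with the explicit case split when $a(Bx)=0$) for the supermultiplicativity \eqref{eq:supermul} are all sound. The paper itself gives no proof of this statement, deferring entirely to \cite[Section 4]{GZ21}, and your argument is the standard direct unwinding of the definitions that the cited reference carries out, so there is nothing substantive to contrast.
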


See \cite[Section 4]{GZ21} for the proof and additional properties of antinorms. From now on, assume $\cF = \{A_1,\ldots,A_m\}$ finite family of real $d \times d$ matrices sharing an invariant cone $K$.

\begin{definition}
	Let $a(\cdot)$ be any antinorm. Then, $\displaystyle a(\cF) := \min_{1 \le j \le m} a(A_j)$.
\end{definition}

The following result, proved in \cite[Proposition 6]{GP13}, guarantees that $a(\cF)$ is a lower bound for the lower spectral radius of $\cF$:

\begin{proposition}
	Let $\cF$, $K$ and $a(\cdot)$ be as above. Then it holds:
	\begin{equation}\label{eq.lowerbound1}
		a(\cF) \le \check{\rho}(\cF).
	\end{equation}
\end{proposition}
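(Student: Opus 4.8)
The plan is to combine the supermultiplicativity of the operator antinorm (inequality \eqref{eq:supermul}) with the global comparison bound $a(x)\le\beta\|x\|$ on $K$ (the upper estimate from \cite[Proposition 4.3]{GZ21}, which crucially does \emph{not} require positivity of $a$), and then to pass to the limit in the definition \eqref{eq:lsr_def} of $\check\rho(\cF)$.

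First I would observe that, since $K$ is invariant for each $A_i$, it is invariant for every product, so $\Sigma_k(\cF)\subset\cL(K)$ and the operator antinorm $a(P)$ is defined for all $P\in\Sigma_k(\cF)$. Iterating \eqref{eq:supermul}, for any $P=A_{i_1}\cdots A_{i_k}\in\Sigma_k(\cF)$ one gets
\[
a(P)\ \ge\ \prod_{j=1}^k a(A_{i_j})\ \ge\ a(\cF)^k .
\]
Next, fix once and for all a vector $v\in K$ with $a(v)=1$: such a $v$ exists because $a$ is nontrivial, hence $a(x_0)>0$ for some $x_0\in K$, and then $v:=x_0/a(x_0)$ works by positive homogeneity. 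Since $v\in\cA'$, the very definition of the operator antinorm gives $a(Pv)\ge a(P)\ge a(\cF)^k$. On the other hand $Pv\in K$, so the comparison bound yields $a(Pv)\le\beta\|Pv\|\le\beta\|P\|\,\|v\|$. Combining the two estimates,
\[
a(\cF)^k\ \le\ \beta\,\|v\|\,\|P\|\qquad\text{for every }P\in\Sigma_k(\cF).
\]

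Taking the minimum over $P\in\Sigma_k(\cF)$ and extracting the $k$-th root gives
\[
\min_{P\in\Sigma_k(\cF)}\|P\|^{1/k}\ \ge\ a(\cF)\,\bigl(\beta\|v\|\bigr)^{-1/k},
\]
and letting $k\to\infty$, using $(\beta\|v\|)^{-1/k}\to1$ and \eqref{eq:lsr_def}, yields $a(\cF)\le\check\rho(\cF)$. (If $a(\cF)=0$ there is nothing to prove since $\check\rho(\cF)\ge0$, so one may assume $a(\cF)>0$.) The only delicate point is that $a$ need not be continuous on $\partial K$, so one cannot simply run a compactness argument on $\cA'$ to compare $a$ and $\|\cdot\|$; this is precisely why I invoke the \emph{global} inequality $a(x)\le\beta\|x\|$ of \cite[Proposition 4.3]{GZ21}. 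Every other step is elementary.
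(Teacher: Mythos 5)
Your argument is correct, and it is worth noting that the paper itself does not prove this proposition: it simply cites \cite[Proposition 6]{GP13}. Your proof is a clean, self-contained derivation from exactly the two facts the paper does record, namely the supermultiplicativity \eqref{eq:supermul} of the operator antinorm and the one-sided comparison $a(x)\le\beta\|x\|$ on $K$, which holds without positivity of $a(\cdot)$. Each step checks out: $\Sigma_k(\cF)\subset\cL(K)$ by invariance of $K$, the iterated bound $a(P)\ge a(\cF)^k$ is valid because all factors are nonnegative, the test vector $v\in\cA'$ exists by nontriviality and homogeneity, and the constant $(\beta\|v\|)^{-1/k}\to 1$ disposes of the normalization. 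A notable feature of your route is that it goes through the norm-based definition \eqref{eq:lsr_def} of $\check\rho(\cF)$ rather than the more tempting spectral-radius characterization $\check\rho(\cF)=\inf_k\bar\rho_k(\cF)$: the latter would suggest evaluating $a$ at a leading eigenvector $v_P\in K$ of $P$ (which exists by \cref{thm.cones}) to get $\rho(P)\,a(v_P)\ge a(P)\,a(v_P)$, but that argument collapses when $a(v_P)=0$, which can happen for non-positive antinorms with $v_P\in\partial K$. You correctly identified this as the delicate point and sidestepped it with the global inequality from \cite[Proposition 4.3]{GZ21}, so your proof covers every antinorm admitted by the statement.
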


\begin{definition}
	An antinorm $a(\cdot)$ is \textbf{extremal} for $\cF$ if equality holds in \eqref{eq.lowerbound1}. 
\end{definition}

The existence of an extremal antinorm was established in \cite[Theorem 5]{GP13}. Although the proof is constructive, it is impractical for our purposes.

\begin{theorem} \label{thm:extremal_antinorm}
	Let $\cF$ and $K$ be as above. Then
	\begin{equation} \label{eq:optimization_problem}
		\check{\rho}(\cF) = \max\left\{ a(\cF) \: : \: a(\cdot) \text{ antinorm defined on } K\right\}.
	\end{equation}
\end{theorem}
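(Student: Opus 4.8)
The plan is to establish the equality in \eqref{eq:optimization_problem} by two inequalities. The inequality
\[
\check{\rho}(\cF) \ge \sup\left\{ a(\cF) \: : \: a(\cdot) \text{ antinorm on } K\right\}
\]
is immediate from \eqref{eq.lowerbound1}, which holds for every antinorm. For the reverse inequality it suffices to exhibit a single antinorm $a_\star(\cdot)$ on $K$ that is extremal, i.e.\ with $a_\star(\cF) = \check{\rho}(\cF)$, and to verify that the supremum is attained (so ``$\max$'' is justified). Since the theorem is attributed to \cite{GP13} (``the existence of an extremal antinorm was established in \cite[Theorem 5]{GP13}''), I would invoke that existence result directly; the task then reduces to assembling the construction and checking the antinorm axioms, which I outline below.

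First I would reduce to the case $\check{\rho}(\cF) = 1$ by the standard normalization: replacing $\cF$ by $\cF / \check{\rho}(\cF)$ scales the LSR linearly and scales every operator antinorm $a(A)$ linearly as well, so extremality is preserved. (If $\check{\rho}(\cF) = 0$ the trivial antinorm $a\equiv 0$ already gives equality, with the convention that the maximum over the empty-like family is $0$; one should remark on this degenerate case separately, or restrict to positive antinorms where needed.) Assuming $\check{\rho}(\cF)=1$, the natural candidate is built from the orbit of a suitable starting point: pick $x_0 \in \operatorname{int}(K)$ and set
\[
a_\star(x) := \inf_{k \ge 0}\ \min_{P \in \Sigma_k(\cF)}\ \langle x, \text{(something)}\rangle,
\]
more precisely one constructs the unit antiball as the closed convex hull of the forward orbit $\{P x_0 : P \in \Sigma(\cF)\} \cup \{0\}$ together with the cone $K$, after checking this set is bounded away from generating an unbounded antiball — this is exactly where $\check\rho(\cF)=1$ (or rather $\bar\rho_k(\cF)\ge 1$ for all $k$, equivalently $\rho(P)\ge 1$ for all products $P$) enters, guaranteeing the orbit does not shrink to $0$ and the resulting functional is a genuine (possibly not positive) antinorm. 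One then verifies $(a1)$–$(a3)$: nonnegativity and positive homogeneity are built in; superadditivity follows because the antiball $\cA_\star = \overline{\operatorname{conv}}\{Px_0\} + K$ is convex by construction, and convexity of the $1$-superlevel set of a positively homogeneous nonnegative function is equivalent to superadditivity. Finally, invariance of $\cA_\star$ under each $A_j$ up to the factor $\check\rho(\cF)=1$, i.e.\ $A_j \cA_\star \subseteq \cA_\star$, is immediate from the orbit being forward-invariant, and this gives $a_\star(A_j) \ge 1$ for all $j$, hence $a_\star(\cF) \ge 1 = \check\rho(\cF)$; combined with \eqref{eq.lowerbound1} this forces equality.

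The main obstacle is the boundedness/well-definedness of $a_\star$: a priori the convex hull of an infinite orbit need not yield a finite-valued antinorm (the antiball could fail to be ``solid'' in the right sense, or the infimum defining $a_\star(x)$ could be $0$ on a face of $K$). Controlling this requires the Gelfand-type characterization $\check\rho(\cF) = \inf_k \bar\rho_k(\cF)$ recalled in \cref{sec:preliminaries} together with a compactness argument on the unit antisphere, and — to keep $a_\star$ positive — the invariant-cone structure of \cref{thm.cones}. Rather than reproving this delicate point, I would cite \cite[Theorem 5]{GP13} for the existence of the extremal antinorm and use the argument above only to record why such an antinorm forces the ``$\max$'' in \eqref{eq:optimization_problem}, noting that the supremum is attained precisely because the extremal antinorm exists.
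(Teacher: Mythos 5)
Your proposal is correct and takes essentially the same route as the paper, which does not prove \cref{thm:extremal_antinorm} itself but defers entirely to the constructive existence result \cite[Theorem 5]{GP13} for the extremal antinorm; the easy inequality ``$\ge$'' via \eqref{eq.lowerbound1} and the observation that an extremal antinorm turns the supremum into an attained maximum are exactly the content the paper implicitly relies on. Your orbit-based sketch of the cited construction is a reasonable outline but is not load-bearing, since in the end you (like the paper) rest the reverse inequality on the citation.
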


Our goal is to find a lower bound for $\check{\rho}(\cF)$; so, solving the optimization problem \eqref{eq:optimization_problem} may not be the best strategy. Instead, fix an antinorm $a(\cdot)$ and define:
\[
\alpha_k (\cF) := \min_{P \in \Sigma_k} a(P)^{1/k}, \quad k \ge 1.
\]
For every $k \ge 1$, $\alpha_k(\cF)$ serves as a lower bound for $\check \rho(\cF)$:

\begin{theorem}
	Let $\cF$, $K$ and $a(\cdot)$ as above. Then, for all $k \ge 1$, it turns out
	\[
	\alpha_k(\cF) \le \check{\rho}(\cF).
	\]
	Moreover, if either $\check{\rho}(\cF)=0$ or $\alpha_1(\cF)>0$, then there exists
	\[
	\alpha(\cF) := \lim_{k \to + \infty} \alpha_k(\cF) = \sup_{k \ge 1} \alpha_k(\cF).
	\]
\end{theorem}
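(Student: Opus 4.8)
The plan is to mirror the classical Fekete-type argument used for the joint spectral radius, but adapted to the superadditive (rather than submultiplicative) character of operator antinorms. First I would establish the inequality $\alpha_k(\cF) \le \check\rho(\cF)$ for each fixed $k \ge 1$. The key is the supermultiplicativity \eqref{eq:supermul}: for any product $P \in \Sigma_{nk}$ obtained by concatenating $n$ products $P_1,\dots,P_n \in \Sigma_k$, we have $a(P) \ge a(P_1)\cdots a(P_n) \ge \alpha_k(\cF)^{nk}$, hence $\min_{P \in \Sigma_{nk}} a(P)^{1/(nk)} \ge \alpha_k(\cF)$. Combining this with the fact that $a(\cdot)$ is bounded above by $\beta\|\cdot\|$ on $K$ (the one-sided bound recalled just before the definition of positivity, valid even without positivity of the antinorm), one gets $a(P)^{1/(nk)} \le \beta^{1/(nk)} \|P\|^{1/(nk)}$ for every such $P$; taking the minimum over $P\in\Sigma_{nk}$ and then $n \to \infty$ yields $\alpha_k(\cF) \le \check\rho(\cF)$ via the definition \eqref{eq:lsr_def} of the LSR as a Gelfand-type limit.

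Second, I would prove the existence of the limit $\alpha(\cF) = \lim_{k\to\infty}\alpha_k(\cF) = \sup_{k\ge 1}\alpha_k(\cF)$ under the stated hypothesis. The natural tool is the (super)multiplicative version of Fekete's subadditive lemma applied to the sequence $\beta_k := \min_{P\in\Sigma_k} a(P)$, which satisfies $\beta_{j+k} \ge \beta_j \beta_k$ by \eqref{eq:supermul}: for a supermultiplicative sequence of nonnegative reals, $\lim_k \beta_k^{1/k} = \sup_k \beta_k^{1/k}$, provided one can rule out the degenerate situation where the sup is $+\infty$ (impossible here, since $\beta_k^{1/k} \le \check\rho(\cF)$ by the previous paragraph) or where dividing by zero causes trouble. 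This is exactly where the dichotomy ``$\check\rho(\cF)=0$ or $\alpha_1(\cF)>0$'' enters: if $\alpha_1(\cF) > 0$ then $\beta_1 = a(\cF) > 0$, so $\beta_k \ge \beta_1^k > 0$ for all $k$, and the Fekete argument applies verbatim to $\log\beta_k$ (which is superadditive with values in $[-\infty,+\infty)$, but in fact finite); if instead $\check\rho(\cF)=0$, then $0 \le \alpha_k(\cF) \le \check\rho(\cF) = 0$ for all $k$, so the sequence is identically zero and the claim is trivial.

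The main obstacle I anticipate is the boundary behavior of the operator antinorm: $a(\cdot)$ may be discontinuous on $\partial\cL(K)$, and the operator antinorm $a(A) = \inf_{x\in\cA'} a(Ax)$ is an infimum that need not be attained and could in principle vanish even when $a$ is positive, if some $A_j$ maps $K$ into $\partial K$. This is why one cannot simply take logarithms blindly and why the hypothesis $\alpha_1(\cF)>0$ (equivalently $a(A_j) > 0$ for all $j$) is imposed — it guarantees $\beta_k$ stays strictly positive, so that $\log\beta_k$ is a genuine real-valued superadditive sequence. The remaining care is bookkeeping: checking that the concatenation map $\Sigma_j \times \Sigma_k \to \Sigma_{j+k}$ is surjective (immediate from \eqref{eq:Sigmak}) so that the infimum over $\Sigma_{j+k}$ is at most the infimum of products of the factored form, which is what supermultiplicativity controls.

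One subtlety worth flagging explicitly in the write-up: for the first inequality we only need the upper bound $a(x)\le\beta\|x\|$, which holds for every antinorm (positivity not required), so the statement $\alpha_k(\cF)\le\check\rho(\cF)$ is unconditional; the hypothesis is needed solely for the convergence of $\alpha_k(\cF)$ to its supremum. I would structure the proof in two clearly separated parts accordingly, and in the second part reduce to Fekete's lemma by setting $c_k := -\log\beta_k \in \R$ (finite by $\alpha_1(\cF)>0$), verifying $c_{j+k}\le c_j + c_k$, and invoking $\lim_k c_k/k = \inf_k c_k/k$.
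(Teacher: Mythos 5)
The paper does not prove this theorem itself; it defers to \cite[Theorem 5.2]{GZ21}. Your argument is the standard supermultiplicative Fekete-type proof that one would expect there, and it is essentially correct: the splitting into (i) the unconditional bound $\alpha_k(\cF)\le\check\rho(\cF)$ via $\alpha_{nk}\ge\alpha_k$ and a comparison with operator norms, and (ii) Fekete's lemma for $\log\beta_k$ under the dichotomy $\check\rho(\cF)=0$ or $\alpha_1(\cF)>0$, is exactly the right structure, and your handling of the degenerate case and of why the hypothesis is needed is accurate.

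One small step should be made explicit rather than asserted. The bound you quote, $a(x)\le\beta\|x\|$ for $x\in K$, is a statement about vectors, whereas in the chain $a(P)^{1/(nk)}\le\beta^{1/(nk)}\|P\|^{1/(nk)}$ you are using $a(P)$ as the \emph{operator} antinorm and $\|P\|$ as the operator norm; the former is an infimum over the antisphere and does not inherit the vector bound verbatim. The fix is immediate: pick any $x_0\in K$ with $a(x_0)=1$ (possible since $a$ is nontrivial and positively homogeneous) and write
\[
a(P)\;=\;\inf_{a(x)=1}a(Px)\;\le\;a(Px_0)\;\le\;\beta\|Px_0\|\;\le\;\bigl(\beta\|x_0\|\bigr)\,\|P\|,
\]
so the constant is $\beta\|x_0\|$ rather than $\beta$; since it disappears after taking $(nk)$-th roots and letting $n\to\infty$, the conclusion is unaffected. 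With that sentence added, the proof is complete.
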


The proof is given in \cite[Theorem 5.2]{GZ21}. As a consequence, when the limit exists, we have the lower bound:
\begin{equation}\label{eq.cr1}
	\alpha(\cF) \le \check{\rho}(\cF).
\end{equation}
However, the value $\alpha(\cF)$ depends on the antinorm $a(\cdot)$ and, consequently, the inequality \eqref{eq.cr1} may be strict. In addition, the accuracy of $\alpha(\cF)$ in approximating the LSR for a given antinorm is unclear, suggesting that there is no clear method for selecting $a(\cdot)$ for any given family. Consequently, we restrict the class of admissible families. 

From now on, we assume $\cF$ \textbf{asymptotically rank-one}. This allows us to associate a unique decomposition of the space:
\[
\R^d = V_P \oplus H_P
\]
for every $P \in \Sigma(\cF)$. We define the \textit{leading set} of $\cF$ as
\[
\mathcal V (\cF) = \bigcup_{P \in \Sigma(\cF)} V_P,
\]
and the \textit{secondary set} as
\[
\cH(\cF) := \bigcup_{P \in \Sigma(\cF)} H_P,
\]
where $V_P$ is the leading eigenspace of $P$, and $H_P$ is defined as in \eqref{eq:secondary_eig}. If $K$ is an invariant cone for $\cF$, then by \cref{thm.cones} we have
\[
\mathrm{clos}\left(\mathcal V(\cF) \right) \subseteq K \cup -K \qquad \text{and} \qquad \mathrm{clos}\left(\mathcal H(\cF) \right) \cap \mathrm{int}(K) = \emptyset.
\]
We now state the main result of \cite{GZ21}, a Gelfand-type limit. Specifically, it establishes the equality in \eqref{eq.cr1}, providing a crucial tool for our algorithm.

\begin{theorem}\label{thm.gelfand} 
	Let $\cF$ be an asymptotically rank-one family of matrices and $K$ an invariant cone for $\cF$ such that
	\begin{equation}\label{eq.hpgelfand}
		K \cap \mathrm{clos}\left( \cH(\cF) \right) = \{0\}.
	\end{equation}
	Then, for any positive antinorm $a(\cdot)$, the following Gelfand's limit holds:
	\begin{equation}\label{eq:gelfand_limit}
		\alpha(\cF) := \lim_{k\to+\infty} \alpha_k(\cF) = \check{\rho}(\cF).
	\end{equation}
\end{theorem}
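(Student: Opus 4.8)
The plan is to prove the two inequalities $\alpha(\cF) \le \check\rho(\cF)$ and $\alpha(\cF) \ge \check\rho(\cF)$ separately; the first is already available, so the entire work lies in the reverse inequality. The lower bound $\alpha(\cF) \le \check\rho(\cF)$ is exactly \eqref{eq.cr1}, and the hypotheses guarantee the limit $\alpha(\cF)$ exists (a positive antinorm has $\alpha_1(\cF)>0$, since $a(A_j) = \inf_{a(x)=1} a(A_j x)$ over the compact antisphere $\cA'$, with $A_j x \ne 0$ because $K$ is invariant and $a$ is positive). So from now on I would focus on showing that for every $\eps>0$ there is a product $P$ of some degree $k$ with $a(P)^{1/k} \ge \check\rho(\cF) - \eps$.

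The natural route is through the s.l.p.\ structure and the Perron-Frobenius machinery of \cref{thm.cones}. First I would reduce to the normalized situation $\check\rho(\cF)=1$ (rescaling $\cF$ by $1/\check\rho(\cF)$, handling the degenerate case $\check\rho(\cF)=0$ trivially via $\alpha(\cF)\ge 0 = \check\rho(\cF)$ combined with \eqref{eq.cr1}). Under this normalization, choose for each $k$ a product $P_k \in \Sigma_k$ attaining $\bar\rho_k(\cF)$, so $\rho(P_k)^{1/k} \to \check\rho(\cF)=1$. Since $\cF$ is asymptotically rank-one, each $P_k$ has a one-dimensional leading eigenspace $V_{P_k} = \R_+ v_k$ with $v_k \in K$ (by \cref{thm.cones}(2)), which I normalize by $a(v_k)=1$; compactness of the antisphere $\cA'$ lets me pass to a convergent subsequence $v_k \to v_\ast \in \cA'$, in particular $v_\ast \in K\setminus\{0\}$ and $a(v_\ast)=1$. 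The key quantitative estimate is then: $a(P_k)^{1/k} = \inf_{a(x)=1} a(P_k x)^{1/k}$, and I want to show this is eventually at least $1-\eps$. For the Perron vector itself, $P_k v_k = \rho(P_k) v_k$, so $a(P_k v_k) = \rho(P_k) a(v_k) = \rho(P_k)$, giving $a(P_k v_k)^{1/k} = \rho(P_k)^{1/k} \to 1$. The obstacle is that the infimum defining $a(P_k)$ might be attained (or approached) at points of $\cA'$ far from $v_k$, where $P_k$ contracts strongly — i.e.\ along directions with a large component in $H_{P_k}$. This is precisely where hypothesis \eqref{eq.hpgelfand}, $K \cap \mathrm{clos}(\cH(\cF)) = \{0\}$, must enter: it forces a uniform separation between $K$ and the secondary subspaces, so that no unit vector of $K$ can be "asymptotically concentrated" in $H_{P_k}$.

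Making this precise is the heart of the argument, and I expect it to be the main obstacle. The idea: for $x \in K$ with $a(x)=1$, decompose $x = c_k(x) v_k + h_k(x)$ with $h_k(x) \in H_{P_k}$; since $P_k$ is rank-one asymptotically, $\|P_k^n x\| \sim c_k(x)\rho(P_k)^n \|v_k\|$ once $c_k(x)>0$, while the $H_{P_k}$-component decays at a strictly smaller rate governed by $\rho(P_k|_{H_{P_k}}) < \rho(P_k)$. Hypothesis \eqref{eq.hpgelfand} together with the closedness statements $\mathrm{clos}(\cV(\cF)) \subseteq K \cup -K$ and $\mathrm{clos}(\cH(\cF)) \cap \mathrm{int}(K) = \emptyset$ should yield a uniform lower bound $c_k(x) \ge c_0 > 0$ for all such $x$ and all large $k$ — otherwise one extracts a sequence $x_k \in K$, $a(x_k)=1$, with $c_k(x_k)\to 0$, whence $x_k$ (along a subsequence, after renormalizing) converges into $\mathrm{clos}(\cH(\cF)) \cap K$, contradicting \eqref{eq.hpgelfand} unless the limit is $0$, which is excluded since $a$ is positive and $\|x_k\|$ is bounded below on $\cA'$. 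Then, using superadditivity and monotonicity of $a$ on $K$ (or rather $a(P_k x) \ge a(c_k(x) P_k v_k) = c_k(x)\rho(P_k)$ up to controlling the $H$-part, which may go negative and thus require a small correction absorbed into the $k$-th root), I get $a(P_k x)^{1/k} \ge (c_0 \rho(P_k) - o(1))^{1/k} \to 1$, uniformly in $x \in \cA'$. Taking infimum over $x$ gives $a(P_k)^{1/k} \to 1 = \check\rho(\cF)$, hence $\alpha(\cF) \ge \limsup_k a(P_k)^{1/k} \ge 1$, completing the proof. The delicate points to nail down are the uniformity in $k$ of the spectral gap and of the constant $c_0$, and the handling of the possibly-negative $H_{P_k}$-contribution to $a(P_k x)$ — both of which should be controllable precisely because the contraction on $H_{P_k}$ is exponential in $k$ while we only lose a $k$-th root.
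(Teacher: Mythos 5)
First, a remark on the comparison itself: the paper does not prove \cref{thm.gelfand} — it is imported verbatim as the main result of \cite{GZ21} — so there is no in-paper proof to measure you against; I can only assess your argument on its own terms.

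On those terms, there is a fatal structural error in the hard direction. Since $\alpha_k(\cF)=\min_{P\in\Sigma_k}a(P)^{1/k}$ and $\alpha(\cF)=\sup_{k\ge1}\alpha_k(\cF)$, proving $\alpha(\cF)\ge\check\rho(\cF)$ requires exhibiting a degree $k$ at which \emph{every} product $P\in\Sigma_k$ satisfies $a(P)^{1/k}\ge\check\rho(\cF)-\eps$. Your stated goal ("there is a product $P$ of some degree $k$ with $a(P)^{1/k}\ge\check\rho(\cF)-\eps$") does not imply this, and your concluding inequality $\alpha(\cF)\ge\limsup_k a(P_k)^{1/k}$ points the wrong way: since $P_k\in\Sigma_k$, one has $\alpha_k(\cF)\le a(P_k)^{1/k}$, so showing $a(P_k)^{1/k}\to 1$ would only recover the easy bound $\alpha(\cF)\le\check\rho(\cF)$ already contained in \eqref{eq.cr1}. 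Moreover the products $P_k$ you analyze — the minimizers of the spectral radius — are in general unrelated to the minimizers of the antinorm, which are the ones that determine $\alpha_k(\cF)$.

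The Perron--Frobenius material you assemble is the right raw material, but it must be deployed uniformly over the whole semigroup rather than along the single sequence $P_k$. The correct skeleton is: (i) every $P\in\Sigma_k$ satisfies $\rho(P)^{1/k}\ge\check\rho(\cF)$, because $\check\rho(\cF)=\inf_k\bar\rho_k(\cF)$; (ii) one then needs a uniform estimate $a(P)\ge c\,\rho(P)$ with $c>0$ independent of $P\in\Sigma(\cF)$, which yields $\alpha_k(\cF)\ge c^{1/k}\check\rho(\cF)\to\check\rho(\cF)$. Step (ii) is precisely where \eqref{eq.hpgelfand}, the compactness of the set of normalized leading eigenvectors in $K$, and a \emph{uniform} spectral gap over the semigroup must all be established; none of these follows from the asymptotic rank-one property of each individual product (the ratio $\rho(P|_{H_P})/\rho(P)$ could a priori tend to $1$ along a sequence of products, and your constant $c_0$ and the "exponential contraction on $H_{P_k}$" are argued only for the spectral-radius minimizers). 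Finally, in the decomposition $x=c_k(x)v_k+h_k(x)$ the component $h_k(x)$ lies in $H_{P_k}$ and hence outside $K$, so superadditivity of $a(\cdot)$ cannot be applied to it directly; controlling that term again requires the norm-equivalence of $a(\cdot)$ on $K$ together with the uniform gap. As written, the proposal does not prove the theorem.
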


For $\cF = \{A\}$, we obtain a characterization of the spectral radius of any asymptotically rank-one matrix using any antinorm:

\begin{corollary}\label{thm.spectrallimit}
	Let $A$ be an asymptotically rank-one matrix and $K$ an invariant cone such that \eqref{eq.hpgelfand} holds. Then, there holds the spectral limit:
	\begin{equation}\label{eq.spradius}
		\rho(A) = \lim_{k \to+ \infty} \left[ a(A^k) \right]^{1/k}.
	\end{equation}
\end{corollary}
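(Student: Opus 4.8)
The statement is precisely \cref{thm.gelfand} specialised to the one-element family $\cF=\{A\}$, so the plan is to check that this family satisfies the hypotheses of that theorem and then read off both sides of \eqref{eq:gelfand_limit}. I would first record the trivial identifications for $\cF=\{A\}$: one has $\Sigma_k(\cF)=\{A^k\}$ for every $k$, hence $\Sigma(\cF)=\{A^k:k\ge 1\}$, so that
\[
\alpha_k(\cF)=\bigl[a(A^k)\bigr]^{1/k},\qquad
\check\rho(\cF)=\lim_{k\to\infty}\|A^k\|^{1/k}=\rho(A),
\]
the last equality being the classical Gelfand formula (equivalently $\check\rho(\cF)=\inf_{k\ge 1}\rho(A^k)^{1/k}=\inf_{k\ge1}(\rho(A)^k)^{1/k}=\rho(A)$). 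Thus the asserted spectral limit \eqref{eq.spradius} is exactly the equality $\alpha(\cF)=\check\rho(\cF)$ for this family.

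The one substantive point is to verify that $\cF=\{A\}$ is an asymptotically rank-one family in the sense of \cref{def:asympt_rank_one}, i.e.\ that every power $A^k$ is asymptotically rank-one. Since $K$ is an invariant cone for $A$, \cref{thm.cones} gives that $\rho(A)$ is an eigenvalue of $A$; combined with the assumption that $A$ is asymptotically rank-one, this forces $\rho(A)$ itself (not $-\rho(A)$) to be the simple leading eigenvalue and, moreover, to be the unique eigenvalue of $A$ of maximal modulus, every other eigenvalue $\lambda$ satisfying $|\lambda|<\rho(A)$. Raising to the $k$-th power, $\rho(A^k)=\rho(A)^k$; the value $\rho(A)^k$ is still a simple eigenvalue of $A^k$, because it is the only eigenvalue of $A^k$ of modulus $\rho(A^k)$ and hence inherits the algebraic multiplicity $1$ of $\rho(A)$ in $A$; and every other eigenvalue of $A^k$ has the form $\lambda^k$ with $|\lambda^k|<\rho(A^k)$. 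So each $A^k$ is asymptotically rank-one, as required.

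The remaining hypothesis is the cone condition: \eqref{eq.hpgelfand} for $\cF=\{A\}$ reads $K\cap\mathrm{clos}\bigl(\cH(\{A\})\bigr)=\{0\}$ with $\cH(\{A\})=\bigcup_{k\ge 1}H_{A^k}$, which is exactly what is assumed in the corollary, so there is nothing to prove. (It is worth noting that $V_{A^k}=V_A$ — the ray through the Perron-type eigenvector $v_A$ of \cref{thm.cones} — for every $k$, whence $H_{A^k}=H_A$ and $\cH(\{A\})=H_A$, so that in practice the hypothesis only has to be checked for $H_A$.) With both hypotheses in place, \cref{thm.gelfand} applied to $\cF=\{A\}$ yields, for any positive antinorm $a(\cdot)$ on $K$, that $\lim_{k\to\infty}\alpha_k(\cF)=\check\rho(\cF)$; substituting the identifications of the first paragraph gives $\lim_{k\to\infty}\bigl[a(A^k)\bigr]^{1/k}=\rho(A)$, which is \eqref{eq.spradius}.

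I do not expect a genuine obstacle, since \cref{thm.gelfand} does the real work; the only place requiring a moment's care is the stability of the asymptotically rank-one property under taking powers, specifically that the \emph{simple} leading eigenvalue stays simple after raising to a power — which is why one needs the fact that $\rho(A)$ is the unique eigenvalue of maximal modulus (a consequence of the invariant-cone assumption via \cref{thm.cones}) rather than merely that it is simple. If one wanted to state the corollary for a general, not necessarily positive, antinorm, a short extra argument would be needed — a lower bound of the form $a(x)\ge c\,\|x\|$ along the ray through $v_A$, which is the only direction surviving in the Gelfand limit — but for positive antinorms the reduction to \cref{thm.gelfand} is immediate.
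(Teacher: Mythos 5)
Your proposal is correct and follows exactly the route the paper intends: the corollary is stated as an immediate specialisation of \cref{thm.gelfand} to the singleton family $\cF=\{A\}$, with $\alpha_k(\{A\})=[a(A^k)]^{1/k}$ and $\check\rho(\{A\})=\rho(A)$. Your explicit check that every power $A^k$ remains asymptotically rank-one (using that the invariant cone forces $\rho(A)$, not $-\rho(A)$, to be the unique simple eigenvalue of maximal modulus) is a detail the paper leaves implicit, but it is accurate and does not change the argument.
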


\begin{example}
	Consider the matrix family $\cF = \{A_1,A_2\}$ where
	\[
	A_1 = \begin{pmatrix}
		2 & 0 \\ 1 & 1
	\end{pmatrix} \qquad \text{and} \qquad A_2 = \begin{pmatrix}
		1 & 1 \\ 0 & 2
	\end{pmatrix}.
	\]
	Both $A_1$ and $A_2$ have spectrum $\Lambda = \{1,2\}$, secondary eigenvalue $\lambda = 1$, and corresponding eigenvectors $v(A_1) = (0,1)^T$ and $v(A_2) = (1,0)^T$. Thus,
	\[
	v(A_1),v(A_2) \in \cH(\cF) \implies \cH(\cF) \cap \R_+^d \neq \{0\},
	\]
	violating assumption \eqref{eq.hpgelfand}. The secondary eigenvectors of the transpose family $\cF^T = \{A_1^T,A_2^T\}$ (see \cref{rmk:self_dual_cone}), on the other hand, are 
	\[
	v(A_1)=v(A_2) = (-1,1)^T,
	\]
	so they do not belong to $K = \R_+^d$. As a result, we can apply \cref{thm.gelfand} to the transpose family, ensuring convergence of our algorithms.
\end{example}

We conclude with two important observations for nonnegative matrices. The first is that for families of strictly positive matrices, the Perron-Frobenius theorem \cite{HornJohnson1985MatrixAnalysis} ensures the asymptotic rank-one property. The second is that this is no longer true if $\cF$ includes matrices with zero entries. However, if we assume that $\cF$ is \textit{primitive}, namely $\exists k$ such that for all $P \in \Sigma_k(\cF)$, $P$ is positive, then the asymptotically rank-one property still holds, and as a result \cref{thm.gelfand} can be applied.

\section{A Gripenberg-like algorithm for the LSR} \label{sec:algorithm}

In 1996, Gri\-pen\-berg \cite{Gripenberg1996} developed an algorithm to approximate the JSR of a matrix family $\cF$ to a specified accuracy $\delta>0$. The algorithm utilizes the bounds:
\[
\rho(P)^{1/k} \le \hat \rho(\cF) \le \|P\|^{1/k}, \qquad \text{for all } P \in \Sigma_k(\cF) \text{ and all } k \ge 1.
\]
This algorithm converges without assumptions on $\cF$, though the convergence rate depends on the chosen norm $\| \cdot \|$. The main novelty is its identification of subsets of products of degree $k$ (see \eqref{eq:Sigmak}), denoted by
\[
S_k \subset \Sigma_k(\cF) \text{ for every } k \in \N,
\]
which exclude unnecessary products for estimating the JSR and typically have much smaller cardinality than $\Sigma_k$.

In \cref{subsec:proof_main}, we present a Gripenberg-like algorithm that applies to the lower spectral radius and, specifically, prove convergence to a fixed accuracy $\delta>0$ independent of the initial antinorm choice.

In \cref{subsec:algorithm_enhanced}, we introduce an improved variant based on ideas from the polytopic algorithm \cite{GP13}. This enhancement adaptively refines the antinorm, boosting convergence rate and robustness, particularly when the extremal antinorm significantly differs from the initial one. We apply these ideas to develop an adaptive procedure improving the original Gripenberg algorithm for the JSR in \cref{subsec:adaptive_norm_algorithm}.

In \cref{subsec:perturbations}, we study the continuity properties of the LSR and develop a perturbation theory that allows to extend the algorithm's applicability to cases where the assumption \eqref{eq.hpgelfand} may not hold.

\subsection{Proof of the main result} \label{subsec:proof_main}

This section presents and proves the article's main result (\cref{thm.main}), providing the theoretical foundation for our algorithms. 

Let $\cF = \{A_1,\ldots,A_m\}$ with an invariant cone $K$. Recall that $\Sigma_k := \Sigma_k(\cF)$ (see \eqref{eq:Sigmak}) denotes of all products of degree $k$. 

\subsubsection{Basic Methodology} \label{subsec:basic_methodology}

Given a target accuracy $\delta>0$ and a fixed antinorm $a(\cdot)$ on $K$, define:
\[
S_1 := \Sigma_1 = \cF.
\]
Additionally, set the initial lower and upper bounds for $\check \rho(\cF)$ as:
\[
\min_{A \in S_1} a(A) =: t_1 \le \check \rho(\cF) \le s_1 := \min_{A \in S_1} \rho(A).
\]
For $k > 1$, recursively define the subset of products of degree $k$:
\[
\scalebox{0.9}{$\displaystyle S_k := \left\{ P = \prod_{\ell=1}^k A_{i_\ell}, \ i_\ell \in \{1,\ldots,m\} 
	\: : \: \prod_{\ell=1}^{k-1} A_{i_\ell} \in S_{k-1} \text{ and } q(P) < s_{k-1} - \delta \right\}$},
\]
where
\begin{equation}\label{eq:definition_q}
	q(P) := \max_{1 \le j \le k} a\left( \prod_{\ell=1}^j A_{i_\ell} \right)^{1/j}.
\end{equation}
Next, update the sequences $t_k$ and $s_k$ as follows:
\[
\scalebox{0.9}{$\displaystyle t_k := \max\left\{ t_{k-1}, \min\left\{ s_k - \delta, \min_{P \in S_k} q(P) \right\} \right\}, \quad s_k := \min\left\{ s_{k-1}, \min_{P \in S_k }\rho(P)^{1/k} \right\}$}.
\]
Our main result establishes that $t_k$ and $s_k$ bound the lower spectral radius from below and above, respectively, with $s_k-t_k$ converging to the target accuracy $\delta$.

\begin{theorem} \label{thm.main}
	Let $\cF$ be an asymptotically rank-one family with an invariant cone $K$ satisfying \eqref{eq.hpgelfand}. Fix $\delta>0$ and choose any positive antinorm $a(\cdot)$ on $K$. Then
	\[
	t_k \le \check{\rho}(\cF) \le s_k
	\]
	for all $k \in \N$, and 
	\[
	\lim_{k\to+\infty} (s_k-t_k) = \delta.
	\]
\end{theorem}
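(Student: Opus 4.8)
The plan is to verify the two assertions separately: first the two-sided bound $t_k\le\check\rho(\cF)\le s_k$ for every $k$, which should follow from the results already collected in \cref{sec:preliminaries}, and then the limiting statement $\lim_k(s_k-t_k)=\delta$, which is the substantive part and where I expect the real work to lie.

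For the bound $\check\rho(\cF)\le s_k$: since $s_k$ is a minimum of terms of the form $\rho(P)^{1/k}$ over products $P$ of degree $k$ (together with $s_{k-1}$), and $\check\rho(\cF)=\inf_{k\ge1}\bar\rho_k(\cF)\le\rho(P)^{1/k}$ for every such $P$, an easy induction on $k$ gives the claim; the only point to check is that $S_k$ is nonempty often enough, but even if $S_k=\emptyset$ we set $s_k=s_{k-1}$ and the bound persists. For the bound $t_k\le\check\rho(\cF)$: by construction $t_k$ is the maximum of $t_{k-1}$ and something that is at most $\min_{P\in S_k}q(P)$; so by induction it suffices to show $q(P)\le\check\rho(\cF)$ for every product $P$ appearing. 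But $q(P)=\max_{1\le j\le k}a(\prod_{\ell=1}^jA_{i_\ell})^{1/j}$, and each prefix $\prod_{\ell=1}^jA_{i_\ell}$ is a product of degree $j$, so $a(\prod_{\ell=1}^jA_{i_\ell})\ge a_j(\cF)^{\,j}\ge\cdots$; more directly, $\alpha_j(\cF)=\min_{Q\in\Sigma_j}a(Q)^{1/j}\le a(\prod_{\ell=1}^jA_{i_\ell})^{1/j}$ does not immediately help, so instead I use supermultiplicativity \eqref{eq:supermul}: for any product $Q$ of degree $j$, $a(Q)\le\|Q\|$ up to the constant $\beta$, hence $a(Q)^{1/j}\le(\beta\|Q\|)^{1/j}\to\check\rho(\cF)$; but we need a clean inequality, not a limit. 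The cleanest route: by \eqref{eq.lowerbound1} applied to the antinorm $a$ one has $a(\cF)\le\check\rho(\cF)$, and for each $j$ the family $\Sigma_j(\cF)$ has $\check\rho(\Sigma_j(\cF))=\check\rho(\cF)^j$, so $a(\Sigma_j(\cF))=\min_{Q\in\Sigma_j}a(Q)\le\check\rho(\cF)^j$, giving $a(\prod_{\ell=1}^jA_{i_\ell})^{1/j}\ge\check\rho(\cF)$ — wait, that is the wrong direction. So in fact $q(P)$ need \emph{not} be $\le\check\rho(\cF)$ pointwise; what saves the lower bound is the \emph{minimum} over $S_k$ together with the cap at $s_k-\delta$. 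The correct argument is: $t_k\le s_k-\delta<s_k$ trivially (this is built into the cap), and for the lower bound one shows $\min\{s_k-\delta,\min_{P\in S_k}q(P)\}\le\check\rho(\cF)$ by observing that by \cref{thm.gelfand} there is a product realizing $a(P)^{1/k}$ arbitrarily close to $\check\rho(\cF)$ from below, and such a product is retained in $S_k$ (its $q$-value is small, hence below $s_{k-1}-\delta$) as long as $s_{k-1}-\delta>\check\rho(\cF)$; combined with $t_{k-1}\le\check\rho(\cF)$ inductively this closes the induction.

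For the limit $\lim_k(s_k-t_k)=\delta$: the sequence $s_k$ is nonincreasing and $\ge\check\rho(\cF)$, hence converges to some $s_\infty\ge\check\rho(\cF)$; the sequence $t_k$ is nondecreasing and $\le\check\rho(\cF)$, hence converges to some $t_\infty\le\check\rho(\cF)$; and $t_k\le s_k-\delta$ always, so $t_\infty\le s_\infty-\delta$. The crux is the two reverse estimates $s_\infty\le\check\rho(\cF)$ and $t_\infty\ge s_\infty-\delta$, which together force $s_\infty=\check\rho(\cF)$ if and only if... actually they force $s_\infty-t_\infty=\delta$ once we know $s_\infty\le t_\infty+\delta$, i.e. $t_\infty\ge s_\infty-\delta$. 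So the two things to prove are: (i) $t_\infty\ge s_\infty-\delta$; (ii) $s_\infty-\delta\le t_\infty$ is not vacuous, i.e. we cannot have $s_k$ stuck strictly above $\check\rho(\cF)+(\text{something})$ while $S_k$ empties out. The key mechanism: \emph{either} at some stage $S_k=\emptyset$, in which case $s_{k-1}-\delta\le q(P)$ for all length-$k$ extensions $P$ of survivors, and one argues via \cref{thm.gelfand} that $q(P)$ for the $a$-minimizing product tends to $\check\rho(\cF)$, forcing $s_{k-1}-\delta\le\check\rho(\cF)+o(1)$ and hence $s_\infty\le\check\rho(\cF)+\delta$, while simultaneously $t_k$ got bumped up to $s_k-\delta$ at the last nonempty stage; \emph{or} $S_k\ne\emptyset$ for all $k$, in which case along the surviving branches $\min_{P\in S_k}q(P)$ is squeezed between $\alpha_k$-type quantities and $s_{k-1}-\delta$, and using that $\alpha_k(\cF)\to\check\rho(\cF)$ (Gelfand, \cref{thm.gelfand}) together with $\rho(P)^{1/k}\ge a(P)^{1/k}$-type relations on the surviving products one shows $s_k-\delta$ and $\min_{P\in S_k}q(P)$ both approach $\check\rho(\cF)$ within $\delta$, so $t_k=\max\{t_{k-1},\min\{s_k-\delta,\cdots\}\}\to s_\infty-\delta$.

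The main obstacle, I expect, is handling the finite-termination case cleanly: showing that if the pruned tree of surviving products dies out at level $k_0$, then the bound $s_{k_0}$ already satisfies $s_{k_0}-\delta\le\check\rho(\cF)$ (so $t_{k_0}=s_{k_0}-\delta\le\check\rho(\cF)$ consistently) and that $s_j=s_{k_0}$, $t_j=t_{k_0}$ for all $j\ge k_0$, giving $s_j-t_j=\delta$ exactly. This requires a careful argument that a surviving ancestor at each level must exist whenever $s_{k-1}-\delta>\check\rho(\cF)$ — precisely because \cref{thm.gelfand} supplies, for large enough degree, a product whose $a$-value raised to the appropriate power is within $\delta$ of $\check\rho(\cF)$, and \emph{every prefix} of a suitably chosen such product can be kept below the threshold by taking the product to be (a power of, or a long prefix-controlled version of) an $a$-optimal product; controlling all prefixes simultaneously, rather than just the full product, is the delicate combinatorial point. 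I would isolate this as a lemma: for every $\eps>0$ there is $N$ and a product $P^\star\in\Sigma_N$ with $q(P^\star)\le\check\rho(\cF)+\eps$, obtained by concatenating many copies of a near-optimal product and bounding the $a$-value of every prefix via supermultiplicativity \eqref{eq:supermul} and the positivity of $a$ (which gives the two-sided norm equivalence, hence uniform control of short prefixes). Granting that lemma, the dichotomy above yields $s_\infty-t_\infty=\delta$, and together with the per-$k$ inequality $t_k\le s_k-\delta$ and monotonicity this gives $\lim_k(s_k-t_k)=\delta$.
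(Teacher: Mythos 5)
Your upper bound is fine and coincides with the paper's \cref{lemma.a1}. The real issues are in the lower bound and the convergence, where you have correctly isolated the delicate point --- controlling \emph{all} prefixes of a product simultaneously --- but the lemma you propose to resolve it is not established by your sketch, and the paper in fact never needs it. Concretely: supermultiplicativity \eqref{eq:supermul} gives \emph{lower} bounds $a(AB)\ge a(A)a(B)$ on antinorms of products, so it cannot bound $q(P^\star)$ from \emph{above}; and if $R\in\Sigma_m$ is an $a$-minimizing product (so $a(R)^{1/m}=\alpha_m(\cF)\le\check\rho(\cF)$), its powers satisfy $a(R^t)^{1/(tm)}\to\rho(R)^{1/m}$ by \cref{thm.spectrallimit}, and $\rho(R)^{1/m}$ can lie far above $\check\rho(\cF)$ --- it is the antinorm of $R$ that is small, not its spectral radius. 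Moreover the short prefixes of $R^M$ (already the single factor $A_{i_1}$) are not controlled at the level $\check\rho(\cF)+\eps$ by norm equivalence, and since $q$ is a maximum over prefixes one bad short prefix destroys the bound. So your key lemma does not follow from your construction (it happens to be true, but requires a compactness argument on the tree of products that is essentially as hard as the theorem), and with it falls your induction for $t_k\le\check\rho(\cF)$, which rests on the existence of a retained product in $S_k$ with small $q$-value.

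The paper sidesteps the lemma entirely. For \cref{lemma.a2} it proves the \emph{reverse} combinatorial fact: every $R\in\Sigma_k$ satisfies $q(R)\ge t_k$, because either $R$ survives into $S_k$ or some prefix was cut at a threshold $s_{j-1}-\delta\ge s_k-\delta$. It then greedily decomposes an arbitrary product of degree $N$ into blocks of length at most $k$, each with normalized antinorm at least $t_k$, so that \eqref{eq:supermul} (now used in the correct direction) yields $\alpha_N(\cF)\ge t_k^{\,j_p/N}\,\inf_{A\in\cF}a(A)^{k/N}\to t_k$, and \cref{thm.gelfand} gives $\check\rho(\cF)=\lim_N\alpha_N(\cF)\ge t_k$. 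For \cref{lemma.a3} it argues by contradiction: if $t_k\le s_\infty-\delta-2\eps$ for all $k$, then $\alpha_k(\cF)\le t_k$ and the Gelfand limit force $\check\rho(\cF)\le s_\infty-\delta-2\eps$, so some $P\in\Sigma_k$ has $\rho(P)^{1/k}<s_\infty-\delta-\eps$; examining the infinite power $P^\infty$, either one extracts infinitely many blocks of normalized antinorm at least $s_\infty-\delta-\eps$, whence $\rho(P)^{1/k}=\lim_N a(P^N)^{1/(Nk)}\ge s_\infty-\delta-\eps$ by the spectral limit \eqref{eq.spradius}, a contradiction, or some cyclic shift of $P$ has all prefixes below threshold, hence lands in $S_k$ and contradicts the bound again. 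Your dichotomy (``$S_k$ empties out or not'') gestures at this, but the engine --- the spectral limit applied to powers of a single near-s.l.p.\ and its cyclic shifts --- is absent from your sketch, and the ``squeezing'' you describe in the nonempty case is not yet an argument. Finally, your claim that $t_k\le s_k-\delta$ holds for every $k$ is not quite right: since $s_k$ may drop after $t_k$ is formed one only gets $t_k\le\max\{t_1,s_{k-1}-\delta\}$; this is minor but worth noting when you pass to the limit.
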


\begin{proof}
	Combines \cref{lemma.a1}, \cref{lemma.a2}, and \cref{lemma.a3}.
\end{proof}

\begin{lemma}[Upper bound]\label{lemma.a1}
	For every $k \ge 1$, it holds $\check{\rho}(\cF) \le s_k$.
\end{lemma}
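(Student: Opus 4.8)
The plan is to show that $s_k$ is always an upper bound for $\check\rho(\cF)$ by exploiting the basic inequality $\bar\rho_k(\cF) = \min_{P\in\Sigma_k}\rho(P)^{1/k} \ge \check\rho(\cF)$ (in fact $\check\rho(\cF)=\inf_k\bar\rho_k(\cF)$, recalled in \cref{sec:preliminaries}), together with the definition of $s_k$ as a minimum over a growing pool of bounds. First I would argue by induction on $k$. For the base case $k=1$, by construction $s_1=\min_{A\in S_1}\rho(A)=\min_{A\in\cF}\rho(A)$, and since each $A\in\cF$ is itself a degree-one product, $\rho(A)\ge\check\rho(\cF)$ for every $A$, hence $s_1\ge\check\rho(\cF)$.

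For the inductive step, assume $s_{k-1}\ge\check\rho(\cF)$. By definition,
\[
s_k = \min\left\{ s_{k-1},\ \min_{P\in S_k}\rho(P)^{1/k}\right\}.
\]
The first term in the outer minimum is $\ge\check\rho(\cF)$ by the inductive hypothesis. For the second term, every $P\in S_k$ is a product of degree $k$, i.e. $P\in\Sigma_k(\cF)$, so $\rho(P)^{1/k}\ge\bar\rho_k(\cF)\ge\check\rho(\cF)$, and therefore $\min_{P\in S_k}\rho(P)^{1/k}\ge\check\rho(\cF)$ as well (with the usual convention that the minimum over an empty set is $+\infty$, which only makes $s_k$ larger and preserves the bound). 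Taking the minimum of two quantities each $\ge\check\rho(\cF)$ gives $s_k\ge\check\rho(\cF)$, completing the induction.

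The only subtlety worth flagging is the empty-set case: if at some stage $S_k=\emptyset$, then $\min_{P\in S_k}\rho(P)^{1/k}=+\infty$ and the recursion yields $s_k=s_{k-1}$, so the bound is inherited from the previous step with no loss; the argument above handles this uniformly. No asymptotic rank-one assumption or the cone condition \eqref{eq.hpgelfand} is needed for this half of \cref{thm.main} — those hypotheses are only required for the lower bound and for the convergence $s_k-t_k\to\delta$. I do not expect any genuine obstacle here; the statement is essentially a bookkeeping consequence of the fact that $s_k$ is a running minimum over spectral-radius-based upper bounds, each of which is individually valid.
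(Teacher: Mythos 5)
Your proof is correct and follows essentially the same route as the paper: both arguments reduce to the observation that $S_k\subseteq\Sigma_k$ and that $\min_{P\in\Sigma_k}\rho(P)^{1/k}\ge\check\rho(\cF)$. You are in fact slightly more careful than the paper's own (rather terse) proof, since you handle the $s_{k-1}$ branch of the recursion explicitly by induction and address the empty-set case.
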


\begin{proof}
	For $k=1$, $s_1 \ge \check{\rho}(\cF)$ by definition. For $k>1$, we note that $S_k \subseteq \Sigma_k$. Since the minimization is monotone decreasing with respect to inclusion, we have:
	\[
	s_k \ge \min_{P \in S_k} \rho(P)^{1/k} \ge \min_{P \in \Sigma_k} \rho(P)^{1/k} \ge \check{\rho}(\cF).
	\]
\end{proof}

\begin{lemma}[Lower bound] \label{lemma.a2}
	For every $k \ge 1$, it holds $\check{\rho}(\cF) \ge t_k$.
\end{lemma}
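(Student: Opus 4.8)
The proof is by induction on $k$. For the base case $k=1$, we have $t_1 = \min_{A \in S_1} a(A) = a(\cF)$, and the inequality $a(\cF) \le \check\rho(\cF)$ is exactly \eqref{eq.lowerbound1}. For the inductive step, I would examine the recursive definition $t_k = \max\{t_{k-1}, \min\{s_k-\delta, \min_{P \in S_k} q(P)\}\}$. Since $t_{k-1} \le \check\rho(\cF)$ by the inductive hypothesis, and since $\max$ of two quantities each $\le \check\rho(\cF)$ is again $\le \check\rho(\cF)$, it suffices to prove the two bounds $s_k - \delta \le \check\rho(\cF)$ and $\min_{P \in S_k} q(P) \le \check\rho(\cF)$ separately (with the convention that if $S_k = \emptyset$ the second term is $+\infty$ and $t_k = \max\{t_{k-1}, s_k - \delta\}$, so only the first bound is needed).

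For the term $s_k - \delta$: by \cref{lemma.a1} we already know $s_k \ge \check\rho(\cF)$, but that is the wrong direction, so this needs a separate argument. The key observation is that $s_k - \delta \le \check\rho(\cF)$ should follow from the fact that whenever $s_k < s_{k-1}$ strictly — i.e. the upper bound actually improved at step $k$ — the improvement came from some product $P \in S_k$ with $\rho(P)^{1/k} = s_k$, and such $P$ was only admitted into $S_k$ because $q(P) < s_{k-1} - \delta$. One then relates $\rho(P)^{1/k}$ to $q(P)$: since $a$ is a positive antinorm, $a(P) \le \rho(P)$ is false in general, so instead I expect one uses that $q(P) \ge a(P)^{1/k}$ and combines with the Gelfand limit to get a handle. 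Actually the cleaner route: show by induction that $s_k - \delta \le \check\rho(\cF)$ by noting $s_k \le \rho(P)^{1/k}$ for the optimal $P$, and $q(P) < s_{k-1}-\delta$ together with $q(P) \ge$ (some lower bound tending to $\check\rho$) forces $s_{k-1} - \delta$ not to drop below $\check\rho$; one must be careful to track that $s_k$ can only decrease when a qualifying product exists. I would write: if $s_k = s_{k-1}$ then $s_k - \delta = s_{k-1} - \delta \le \check\rho(\cF)$ by the inductive hypothesis that $t_{k-1} \ge s_{k-1} - \delta$ is NOT assumed — so more carefully, maintain as part of the induction the auxiliary invariant $s_k - \delta \le \check\rho(\cF)$ itself, proving it jointly.

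For the term $\min_{P \in S_k} q(P)$: here I would argue that $q(P) = \max_{1 \le j \le k} a(\prod_{\ell=1}^j A_{i_\ell})^{1/j} \ge a(P)^{1/k}$ (taking $j = k$), and since there exists at least one product realizing the minimum, $\min_{P \in S_k} q(P) \ge \min_{P \in \Sigma_k} a(P)^{1/k} = \alpha_k(\cF) \le \check\rho(\cF)$ — wait, that gives a lower bound on $\min q$, not an upper bound. The needed direction ($\min_{P\in S_k} q(P) \le \check\rho(\cF)$) instead should use that an s.l.p.\ or near-s.l.p.\ product, or more precisely the product chain achieving $\bar\rho_k$, has small $q$-value: by \cref{thm.gelfand}, $\alpha_j(\cF) \to \check\rho(\cF)$, and for a suitable product $P \in \Sigma_k$ one has $q(P)$ close to $\check\rho(\cF)$; the point is to verify such a $P$ survives the pruning into $S_k$, i.e.\ that $q(P) < s_{k-1} - \delta$ whenever $s_{k-1} - \delta > \check\rho(\cF)$, which is where the $\delta$-gap does its work.

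The main obstacle, as I see it, is bookkeeping the interaction between the pruning condition $q(P) < s_{k-1}-\delta$ and the two-sided bounds: one must show simultaneously that (i) no product needed to certify a good lower bound is ever discarded, and (ii) the quantity $s_k - \delta$ stays below $\check\rho(\cF)$. This suggests strengthening the induction hypothesis to carry both $t_k \le \check\rho(\cF) \le s_k$ and $s_k - \delta \le \check\rho(\cF)$ (equivalently $s_k - \delta \le \check\rho(\cF) \le s_k$), and proving all of it in one induction, invoking \eqref{eq.lowerbound1}, \cref{lemma.a1}, and the monotonicity of $a$ along subproducts via \eqref{eq:supermul}. The convergence $s_k - t_k \to \delta$ (\cref{lemma.a3}) is then the separate statement, relying on \cref{thm.gelfand} to force $t_k$ up to $\check\rho(\cF)$ and $s_k$ down to $\check\rho(\cF)$.
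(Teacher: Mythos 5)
Your base case is right, and you correctly sense that \cref{thm.gelfand} must enter and that the pruning condition $q(P)<s_{k-1}-\delta$ matters, but the route you sketch has a fatal flaw and misses the mechanism that actually makes the lemma work. The flaw: $s_k-\delta\le\check\rho(\cF)$ is false in general, so it cannot serve as an auxiliary invariant to be carried through the induction. Already for $k=1$ one has $s_1=\min_i\rho(A_i)$, which can exceed $\check\rho(\cF)$ by far more than $\delta$; the gap $s_k-t_k$ only approaches $\delta$ in the limit, which is \cref{lemma.a3} — proved afterwards and \emph{using} \cref{lemma.a2}, so it cannot be fed back in here. Your treatment of the other branch also stalls: as you notice, $q(P)\ge a(P)^{1/k}\ge\alpha_k(\cF)$ points the wrong way, and there is no direct argument that $\min_{P\in S_k}q(P)\le\check\rho(\cF)$ either. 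Since neither branch of the $\min$ can be bounded by $\check\rho(\cF)$ individually for each fixed $k$, the induction you set up cannot be completed.

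The paper's proof proceeds differently. First it establishes \eqref{eq:boundtk}: $q(R)\ge t_k$ for \emph{every} $R\in\Sigma_k$, not only for $R\in S_k$. This uses that $q$ is monotone along prefixes (a longer product maximizes over more terms) and that $\{s_n\}$ is non-increasing: a product discarded at some stage $j\le k$ already had $q\ge s_{j-1}-\delta\ge s_k-\delta$, while retained products are covered by the $\min_{P\in S_k}q(P)$ branch, and the prefix of length $k-1$ handles the $t_{k-1}$ branch inductively. Consequently every degree-$k$ product has a prefix of some length $j_*\le k$ with $a(\cdot)^{1/j_*}\ge t_k$. The decisive step, absent from your plan, is a covering argument: an arbitrary product $P\in\Sigma_N$ is chopped into consecutive blocks of length at most $k$, each with antinorm growth at least $t_k$; supermultiplicativity \eqref{eq:supermul} then gives
\[
a(P)^{1/N}\ \ge\ (t_k)^{j_p/N}\,\inf_{A\in\cF}a(A)^{k/N},
\]
hence the same bound for $\alpha_N(\cF)$, and letting $N\to\infty$ with \cref{thm.gelfand} yields $\check\rho(\cF)\ge t_k$. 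Without this block decomposition there is no way to convert a statement about prefixes of degree-$k$ products into a bound on the Gelfand limit, which is exactly the conversion the lemma requires.
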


\begin{proof}
	Consider an arbitrary product $R$ of degree $k$, say $R = \prod_{h= 1}^k A_{i_h} \in \Sigma_k$. Since the sequence $\{s_n\}_{n \in \mathbb N}$ is non-increasing, the definition of $t_k$ implies that
	\begin{equation} \label{eq:boundtk}
		\max_{1\le j \le k} a \left( \prod_{h = 1}^j A_{i_h} \right)^{1/j} =: q(R) \ge t_k.
	\end{equation}
	Consequently, there must be an index $j_\ast \in \{1,\ldots,k\}$ such that
	\begin{equation}\label{eq.aux1}
		a \left( \prod_{h = 1}^{j_\ast} A_{i_h} \right)^{1/j_\ast} \ge t_k.
	\end{equation}
	Consider now an arbitrary product $P$ of degree $N > k$ of the form
	\[
	P := \prod_{\ell = 1}^N A_{i_\ell}, \qquad i_\ell \in \{1,\ldots,m\} \text{ for all } \ell.
	\]
	Notice that we do not require $R$ to be a sub-product of $P$. Given the inequality \eqref{eq.aux1}, it is possible to construct a sequence of indices
	\[
	0 = j_0 < j_1 < j_2 < \cdots < j_p \le j_{p+1} = N
	\]
	such that, for every $0 \le r \le p-1$, there holds:
	\[
	j_{r+1}-j_r \le k \qquad \text{and} \qquad a \left( \prod_{\ell = j_r+1}^{j_{r+1}} A_{i_\ell} \right)^{\frac{1}{j_{r+1}-j_r}} \ge t_k.
	\]
	Using \eqref{eq:supermul}, the previous allows us to derive an estimate for the antinorm of $P$ as
	\[ \begin{aligned}
		a \left( P \right)^{1/N} & \ge \prod_{r=0}^{p-1} \left(a \left( \prod_{\ell = j_r+1}^{j_{r+1}} A_{i_\ell} \right)^{\frac{1}{j_{r+1}-j_r}}\right)^{\frac{j_{r+1}-j_r}{N}} \prod_{\ell = j_p+1}^N a(A_{i_\ell})^{1/N}
		\\[.5em] & \ge (t_k)^{j_p/N}  \inf_{A \in \cF} a(A)^{k/N},
	\end{aligned} \]
	where $j_p/N \to 1$ and $k/N \to 0$ as $N \to + \infty$. Since $P \in \Sigma_N$ is an arbitrary product, we can take the infimum over all such products of degree $N$ to conclude that
	\[
	\alpha_N(\cF) \ge (t_k)^{j_p/N} \inf_{A \in \cF} a(A)^{k/N}.
	\]
	By \cref{thm.gelfand}, the quantity $\alpha_N(\cF)$ converges to $\check \rho(\cF)$ as $N \to + \infty$. Therefore, passing to the limit the inequality above yields
	\[
	\check{\rho}(\cF) = \lim_{N \to + \infty} \alpha_N(\cF) \ge \lim_{N \to + \infty} \left[ (t_k)^{j_p/N} \inf_{A \in \cF} a(A)^{k/N} \right] = t_k,
	\]
	and this completes the proof.
\end{proof}

\begin{lemma}[Convergence] \label{lemma.a3}
	Under the assumptions of \cref{thm.main}, the difference between $s_k$ and $t_k$ converges to the fixed accuracy, i.e. $\lim_{k\to+\infty}(s_k-t_k)=\delta$.
\end{lemma}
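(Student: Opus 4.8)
The plan is to prove $\limsup_k (s_k - t_k) \le \delta$ and $\liminf_k (s_k - t_k) \ge \delta$ separately, using the monotonicity of $\{s_k\}$ and $\{t_k\}$ already available from Lemmas~\ref{lemma.a1} and~\ref{lemma.a2} (both sequences sandwich $\check\rho(\cF)$, and by construction $s_k$ is non-increasing while $t_k$ is non-decreasing, so the limit $\lim_k(s_k-t_k)$ exists as soon as we know it is bounded). The lower inequality $s_k - t_k \ge \delta$ should be immediate from the definition of $t_k$: since $t_k = \max\{t_{k-1}, \min\{s_k - \delta, \min_{P\in S_k} q(P)\}\}$ and inductively $t_{k-1} \le \check\rho(\cF) \le s_k$, hence $t_{k-1} \le s_k - \delta$ fails to be automatic — so I must be slightly careful. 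Actually the cleanest route: show by induction that $t_k \le s_k - \delta$ for all $k$. For $k=1$ one needs $t_1 \le s_1 - \delta$, which may require $\delta$ small enough, or more likely the intended reading is that $t_1$ is itself defined with the $s_1 - \delta$ cap — re-examining the definition, $t_1 = \min_{A\in S_1} a(A)$ with no cap, so one genuinely needs a hypothesis like $\delta$ not too large, OR the statement is really about $\liminf$. I will therefore establish $t_k \le s_k - \delta$ for all sufficiently large $k$, which suffices for the limit.

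The substantive half is $\liminf_k (s_k - t_k) \ge \delta$, equivalently $\limsup_k (s_k - t_k) \le \delta$, i.e. $t_k$ eventually gets within $\delta$ of $s_k$ from below, which amounts to showing $t_k \ge s_k - \delta - o(1)$. Here is where Theorem~\ref{thm.gelfand} enters. Let $s_\infty := \lim_k s_k = \inf_k s_k$; by Lemma~\ref{lemma.a1}, $s_\infty \ge \check\rho(\cF)$. I claim $s_\infty = \check\rho(\cF)$: indeed $\check\rho(\cF) = \inf_k \bar\rho_k(\cF) = \inf_k \min_{P\in\Sigma_k}\rho(P)^{1/k}$, and the point of the pruning rule is that any product ever discarded has $q(P) \ge s_{k-1} - \delta$, but $\rho(P)^{1/k} \le q(P)$ fails in general — wait, actually we need $\rho(\Pi)^{1/k}$ small for an s.l.p.\ $\Pi$, and we must check such a product is never pruned too early, or that enough of it survives. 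The key observation is that if $\Pi = \prod A_{i_\ell}$ is an s.l.p.\ of degree $k_0$, then for its prefixes $a(\prod_{\ell\le j} A_{i_\ell})^{1/j}$ could be large, so $q(\Pi)$ could exceed $s_{k_0-1}-\delta$ and $\Pi$ gets pruned. This is precisely why one cannot hope for exact convergence — only convergence up to $\delta$. So instead the argument must be: since $\alpha_N(\cF) \to \check\rho(\cF)$, there exists $N$ and a product $P\in\Sigma_N$ with $a(P)^{1/N}$ close to $\check\rho(\cF)$, hence all its prefixes $Q$ satisfy $a(Q)^{1/|Q|}$ not much larger than $\check\rho(\cF)$ (using supermultiplicativity in reverse plus a uniform lower bound on $a(A)$), so $q$ along this product stays near $\check\rho(\cF) \le s_{k-1}$; thus this product (and its prefixes) survive the pruning as long as $q < s_{k-1} - \delta$, feeding into $\min_{P\in S_k} q(P)$ and forcing $t_k$ up toward $s_k - \delta$.

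Concretely, the main steps I would carry out: (1) record that $\{s_k\}$ is non-increasing and $\{t_k\}$ non-decreasing, both bounded, so $s_\infty, t_\infty$ exist with $t_\infty \le \check\rho(\cF) \le s_\infty$; (2) show $s_k - t_k \ge \delta$ eventually, via the induction $t_k \le s_k - \delta$ sketched above together with the observation that whenever $\min_{P\in S_k} q(P) \ge s_k - \delta$ the update caps $t_k$ at exactly $s_k - \delta$, and whenever it is $< s_k-\delta$ we get $t_k \le s_k - \delta$ directly (the remaining case $t_{k-1} > s_k - \delta$ being ruled out inductively since $t_{k-1} \le s_{k-1} - \delta$ and... no: need $s_{k-1} - \delta$ vs $s_k - \delta$, and $s_k \le s_{k-1}$, so this case genuinely can occur and requires $s_k = s_{k-1}$, i.e. $t_{k-1} = s_{k-1}-\delta = s_k - \delta$, so still $t_k = s_k-\delta$ — fine); (3) prove the reverse, $\limsup_k(s_k - t_k)\le\delta$: fix $\eps>0$; by Theorem~\ref{thm.gelfand} pick $N$ large with $\alpha_N(\cF) \ge \check\rho(\cF) - \eps$, and a realizing product $P^\ast\in\Sigma_N$; by supermultiplicativity and the positivity of $a$ (giving $a(A) \ge c > 0$ on $\cF$, hence $a(Q) \ge c^{\,j} a(P^\ast)/\text{(tail)}$... more precisely $a(P^\ast) \ge a(Q)\,a(\text{tail}) \ge a(Q) c^{N-j}$ for a prefix $Q$ of length $j$, so $a(Q)^{1/j} \le (a(P^\ast) c^{-(N-j)})^{1/j}$, which is close to $\check\rho(\cF)$ when $j$ is a large fraction of $N$, but can blow up for small $j$) — so I restrict attention to products $P^\ast$ whose \emph{every} prefix already has controlled antinorm, obtainable by concatenating an optimal block many times; (4) conclude that along such a product the pruning criterion $q(P) < s_{k-1} - \delta$ is met for all $k \le N$ provided $\check\rho(\cF) + \eps' < s_{k-1} - \delta$, i.e. provided $s_{k-1} > \check\rho(\cF) + \delta + \eps'$; if that ever fails we already have $s_{k-1} \le \check\rho(\cF) + \delta + \eps' \le t_{k} + \delta + \eps' + (\check\rho - t_k)$... hmm, this needs $t_k$ close to $\check\rho$, circular. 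The clean finish: either $s_k \le \check\rho(\cF) + \delta$ for some $k$ (then $s_k - t_k \le s_k - (s_k - \delta) = \delta$ using step (2)-type bound from below on... no) — I think the honest statement is $s_\infty \in [\check\rho(\cF), \check\rho(\cF)+\delta\,]$ and $t_\infty \ge s_\infty - \delta$, both forced by the survival-of-near-optimal-products argument, giving $s_\infty - t_\infty = \delta$ after combining with step (2). I expect step (3)–(4), namely showing enough near-optimal products survive the pruning to push $t_k$ up to within $\delta$ of $s_\infty$, to be the main obstacle: it requires simultaneously controlling all prefixes of a near-optimal long product and tracking how $s_k$ decreases relative to $\delta$, and this is exactly the delicate bookkeeping that makes the $\delta$ (rather than $0$) appear in the limit.
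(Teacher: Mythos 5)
Your plan correctly isolates the two directions and correctly handles (modulo the $t_1$ edge case) the easy one, $t_k\le s_k-\delta$. But the substantive direction is left with a genuine gap that you yourself flag (``circular'', ``I expect step (3)--(4)\ldots to be the main obstacle''), and the gap is real: your step (3) picks a product $P^\ast$ nearly minimizing the \emph{antinorm} ($a(P^\ast)^{1/N}\approx\alpha_N$) and tries to show it survives pruning. That is aimed at the wrong target. Since $q(P)\ge a(P)^{1/k}$, one gets for free that $\min_{P\in S_k}q(P)\ge\alpha_k\to\check\rho(\cF)$, hence $\liminf_k t_k\ge\min\{s_\infty-\delta,\;\check\rho(\cF)\}$ directly from \cref{thm.gelfand}; no survival argument for antinorm-minimizers is needed. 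What actually has to be proved is the inequality $s_\infty\le\check\rho(\cF)+\delta$, i.e.\ that the pruning (which discards products by their antinorm-based $q$-values) cannot discard \emph{every} product of near-minimal \emph{spectral radius}, so that $s_k$ — a minimum taken only over the surviving sets $S_k$ — still descends to within $\delta$ of $\check\rho(\cF)$. Your sketch never isolates this claim and offers no mechanism linking the antinorm-based pruning criterion to spectral radii; this is exactly where it stalls.

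The paper closes this by contradiction using the single-matrix spectral limit (\cref{thm.spectrallimit}), which your outline never invokes. Supposing $t_k\le s_\infty-\delta-2\eps$ for all $k$, one gets $\check\rho(\cF)\le s_\infty-\delta-2\eps$ via the Gelfand limit, hence a product $P\in\Sigma_k$ with $\rho(P)^{1/k}<s_\infty-\delta-\eps$. One then examines the infinite power $P^\infty$ and its blocks: either infinitely many blocks have antinorm at least $(s_\infty-\delta-\eps)^{\text{length}}$, which is impossible because $a(P^N)^{1/N}\to\rho(P)<(s_\infty-\delta-\eps)^k$; or from some point on all blocks have small antinorm, in which case a cyclic permutation $Q$ of $P$ has every prefix's $q$-value below $s_k-\delta$, hence $Q$ survives into $S_k$ and forces $s_k\le\rho(Q)^{1/k}=\rho(P)^{1/k}<s_\infty$, a contradiction. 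This is the device that resolves the prefix-control problem you struggle with: powers of a \emph{fixed} product are automatically controlled because $a(P^N)^{1/N}$ converges to $\rho(P)$, and cyclic invariance of the spectral radius transfers the conclusion back to a product the algorithm actually retains. Without this (or an equivalent substitute), your argument does not establish $\limsup_k(s_k-t_k)\le\delta$.
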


\begin{proof}
	We argue by contradiction. Assume the limit is not $\delta$. Then there exists $\eps>0$ such that, defining $s_\infty := \lim_{n \to\infty} s_n$, we have
	\begin{equation}\label{eq.l31}
		t_k \le s_\infty - \delta - 2\eps \quad \text{for every } k \ge 1.
	\end{equation}
	Consequently, by the definition of $t_k$, the minimum $\min \left\{ s_k - \delta, \inf_{P\in S_k} q(P) \right\}$ is achieved by the second term, which can also be written as:
	\[
	\min_{P\in S_k} \max_{1 \le j \le k} a \left( \prod_{\ell=1}^j A_{i_\ell} \right)^{1/j}.
	\]
	In addition, for every $k \ge 1$, $\alpha_k(\cF) \le t_k$. Combining this with \eqref{eq.l31}, we find $\alpha_k(\cF) \le s_\infty - \delta - 2\eps$. Taking the limit as $k \to \infty$ and using \cref{thm.gelfand}, we deduce:
	\[
	\check{\rho}(\cF)  \le s_\infty - \delta - 2\eps.
	\]
	As $\eps>0$ is arbitrarily small, there exists $k \in \N$ and matrices $A_{i_1},\ldots,A_{i_k}$ in the family $\cF$ such that the following inequality holds:
	\begin{equation} \label{eq.main}
		\rho \left( \prod_{\ell = 1}^k A_{i_\ell}\right)^{1/k} < \check{\rho}(\cF) + \eps \le s_\infty - \delta - \eps.
	\end{equation}
	We claim that \eqref{eq.main} yields a contradiction. Consider $P = A_{i_1}\cdots A_{i_k}$ and define the infinite product $P^\infty := \prod_{j=1}^\infty P$, obtained by multiplying $P$ by itself infinitely many times. Our goal is to now identify a sequence of integers 
     \[
    0 = j_0 < j_1 < j_2 < \cdots
    \]
    such that, for $r \ge 0$, there holds:
	\[
	a \left( \prod_{\ell = j_r+1}^{j_{r+1}} A_{i_\ell} \right)^{1/(j_{r+1}-j_r)} \ge s_\infty - \delta - \eps.
	\]
	There are now two cases that we need to analyze separately:
	\begin{enumerate}[label=(\arabic*)]
		\item Infinitely many $j_r$ with this property exist. Applying \cref{thm.spectrallimit}, we calculate the spectral radius of $P$ as:
		\[
		\rho \left( P \right) = \lim_{N \to + \infty} a \left( \left( \prod_{\ell = 1}^k A_{i_\ell}\right)^N \right)^{1/N} = \lim_{N \to + \infty} a \left( P^N \right)^{1/N}.
		\]
		Thus, using the infinite sequence $j_0<j_1<\cdots$, we get 
  \[
  \rho \left( P \right)^{1/k} \ge s_\infty - \delta - \eps,
  \]
  contradicting \eqref{eq.main}.
		
		\item If not, then there exists an integer $j_p$, with $p \ge 0$, such that
		\[
		a \left( \prod_{\ell = j_p+1}^{j_p + N} A_{i_\ell} \right)^{1/N} < s_\infty - \delta - \eps \qquad \text{for } N \ge 1.
		\]
		The product $\prod_{\ell = j_p+1}^{j_p+k} A_{i_{\ell}}$ belongs to $S_k$ since the sequence of upper bounds $\{s_n\}_n$ is non-increasing and, consequently, it satisfies the inequality:
		\[
		a \left( \prod_{\ell = j_p+1}^{j_p + k} A_{i_\ell} \right)^{1/k} < s_\infty - \delta - \eps \le s_k - \delta - \eps.
		\]
		Furthermore, the sequence of matrices $(A_{i_{j_p+1}},\ldots,A_{i_{j_p+k}})$ is a cyclic permutation of $(A_{i_1},\ldots,A_{i_k})$. Therefore, since the spectral radius is invariant under cyclic permutations, applying \cref{thm.spectrallimit} yields:
		\[
		\rho \left( \prod_{\ell = j_p + 1}^{j_p+k} A_{i_\ell} \right)^{1/k} = \left[\lim_{N \to +\infty} a \left( P^N \right)^{1/N} \right]^{1/k} \ge s_\infty - \delta - \eps,
		\]
		again contradicting \eqref{eq.main}.
	\end{enumerate}
\end{proof}

\subsection{Adaptive methodology} \label{subsec:algorithm_enhanced}

We now introduce an adaptive procedure for refining the initial polytope antinorm. Unlike our previous approach, this strategy dynamically improves the antinorm throughout the algorithm. The outline of this adaptive procedure is as follows:
\begin{enumerate}[label=(\alph*)]
	\item Let $a^{(0)} := a$ be the initial polytope antinorm, with $V_0$ as the corresponding minimal vertex set (see \cref{def:pol_antinorm} for details).
	
	\item At step $k$, let $a^{(k-1)}$ and $V_{k-1}$ be the antinorm and minimal vertex set from step $k-1$. Set $V_k:=V_{k-1}$. For each matrix product $P \in S_k$, find $y \in \R^d$ that solves the problem:
	\[
	a^{(k-1)}(Py) = \min \left\{ a^{(k-1)}(Px) \: : \: x \in K \text{ s.t. } a^{(k-1)}(x)=1 \right\}.
	\]
	Add $Py$ to $V_k$ if it falls inside the polytope (i.e., $a^{(k-1)}(Py)\le1$); otherwise, discard it.
	
	\item After evaluating all matrix products in $S_k$, extract a minimal vertex set from $V_k$ and define $a^{(k)}(\cdot)$ as the corresponding polytope antinorm.
\end{enumerate}

\begin{proposition} \label{prop:improvement}
	Let $\cF$, $K$, $a(\cdot)$ and $s_k$ be as in \cref{thm.main}. Set $a^{(0)}:=a$, and define new lower bound and product set as:
	\[\begin{aligned}
		& t_k^\ast := \max\left\{ t_{k-1}^\ast, \min\left\{ s_k - \delta, \inf_{P \in S_k} q^{(k-1)}(P) \right\} \right\},
		\\
		& S_k^\ast := \left\{ P = \prod_{\ell=1}^k A_{i_\ell} \: : \: q^{(k-1)} (P) < s_{k-1} - \delta \text{ and } \prod_{\ell=1}^{k-1} A_{i_\ell} \in S_{k-1}^\ast \right\},
	\end{aligned}\]
	where $q^{(k-1)}(\cdot)$ is defined as in \eqref{eq:definition_q} with $a^{(k-1)}$ replacing $a(\cdot)$:
	\[
	q^{(k-1)}(P) := \max_{1 \le j \le k} a^{(k-1)}\left( \prod_{\ell=1}^j A_{i_\ell} \right)^{1/j}.
	\]
	Then $S_k^\ast \subseteq S_k$, $\check\rho(\cF) \ge t_k^\ast \ge t_k$, and $\lim_{k \to + \infty} (s_k - t_k^\ast) = \delta^\ast \le \delta$.
\end{proposition}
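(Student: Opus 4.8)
The strategy is to mirror the three-step structure behind \cref{thm.main} (the upper bound via \cref{lemma.a1}, the lower bound via \cref{lemma.a2}, and the convergence via \cref{lemma.a3}), keeping the upper bounds $s_k$ literally unchanged and tracking how the antinorm refinement propagates through the definitions of $q^{(k-1)}$, $t_k^\ast$, and $S_k^\ast$. First I would establish the inclusion $S_k^\ast \subseteq S_k$ by induction on $k$. The base case is $S_1^\ast = S_1 = \cF$. For the inductive step, the key observation is that the adaptive procedure only \emph{adds} vertices to the antiball (it enlarges $V_{k-1}$ before extracting a minimal subset), so $\cA^{(k-1)} \supseteq \cA^{(0)} = \cA$ as sets, hence $a^{(k-1)}(x) \le a(x)$ for all $x \in K$, and therefore $q^{(k-1)}(P) \le q(P)$ for every product $P$. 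Combining $q^{(k-1)}(P) \le q(P)$ with the inductive hypothesis $\prod_{\ell=1}^{k-1} A_{i_\ell} \in S_{k-1}^\ast \subseteq S_{k-1}$, the defining condition $q^{(k-1)}(P) < s_{k-1} - \delta$ (with $q(P) < s_{k-1}-\delta$ being weaker) gives $S_k^\ast \subseteq S_k$.

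Next I would prove $t_k^\ast \ge t_k$, again by induction. Since $a^{(k-1)}(A) \le a(A)$ pointwise on products is the \emph{wrong} direction for a lower bound, one must instead argue as in \cref{lemma.a2}: the quantity $t_k^\ast$ is still a valid lower bound for $\check\rho(\cF)$ because the argument of \cref{lemma.a2} only uses the supermultiplicativity \eqref{eq:supermul} of the operator antinorm and the Gelfand limit \cref{thm.gelfand}, both of which hold verbatim for the \emph{fixed} antinorm $a^{(k-1)}$ at each stage — note that although the antinorm changes with $k$, for each fixed $k$ the bound $t_k^\ast \le \check\rho(\cF)$ follows by applying \cref{lemma.a2}'s reasoning with $a^{(k-1)}$ in place of $a$. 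For the comparison $t_k^\ast \ge t_k$: by the inductive hypothesis $t_{k-1}^\ast \ge t_{k-1}$, and since $s_k$ is unchanged, it suffices to compare $\inf_{P \in S_k^\ast} q^{(k-1)}(P)$ with $\inf_{P \in S_k} q(P)$. Here one must be careful: shrinking the index set from $S_k$ to $S_k^\ast$ raises the infimum, but replacing $q$ by the smaller $q^{(k-1)}$ lowers it. The resolution is that any product $P \in S_k \setminus S_k^\ast$ was \emph{excluded} at some earlier stage $j < k$ precisely because $q^{(j-1)}(P_{\le j}) \ge s_{j-1} - \delta \ge s_k - \delta$ (using monotonicity of $s_n$), so for such $P$ the truncation of degree $j$ already witnesses $q^{(k-1)}(P) \ge a^{(k-1)}(P_{\le j})^{1/j} \ge \ldots$ — but this needs the antinorm to be \emph{consistent across stages}, which it is not in general. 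The clean way out is to observe that $t_k^\ast = \max\{t_{k-1}^\ast, \min\{s_k - \delta, \inf_{P\in S_k^\ast} q^{(k-1)}(P)\}\}$ and that the min with $s_k - \delta$ caps both $t_k$ and $t_k^\ast$ at the same value; so whenever $t_k < s_k - \delta$ strictly, one shows directly that $\inf_{P \in S_k^\ast} q^{(k-1)}(P) \ge t_k$ by checking that any minimizing product in $S_k^\ast$, being also in $S_k$ but having survived the stricter $q^{(k-1)}$-test, has $q^{(k-1)}(P) \ge$ the $j_\ast$-truncation bound from \eqref{eq.aux1}. This is the main obstacle and will require stating a monotonicity lemma relating the nested antinorms $a^{(0)} \le a^{(1)} \le \cdots$ on the relevant truncations.

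Finally, the convergence statement $\lim_{k\to\infty}(s_k - t_k^\ast) = \delta^\ast \le \delta$ splits into two parts. The inequality $\delta^\ast \le \delta$ is immediate from $s_k - t_k^\ast \le s_k - t_k$ and $\lim(s_k - t_k) = \delta$. For the existence of the limit $\delta^\ast$: the sequence $s_k - t_k^\ast$ need not be monotone, but $s_k$ is non-increasing and $t_k^\ast$ is non-decreasing (directly from its definition as a max with $t_{k-1}^\ast$), so $s_k - t_k^\ast$ is non-increasing; being also bounded below by $0$ (since $t_k^\ast \le \check\rho(\cF) \le s_k$ from the first two parts), it converges. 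One then argues, exactly as in \cref{lemma.a3} but with the adaptive antinorms: if $\lim(s_k - t_k^\ast) = \delta^\ast$ and $\delta^\ast < \delta$ led to a contradiction one would need it, but here no contradiction is sought — the claim is merely that the limit exists and is $\le \delta$, both of which follow from monotonicity plus boundedness plus the comparison with the non-adaptive sequence. So the convergence part is routine once the monotonicity $t_k^\ast \ge t_k$ is in hand; the genuine work is entirely in that comparison.
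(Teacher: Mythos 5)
There is a genuine error at the foundation of your argument: you have the monotonicity of the adaptive antinorms backwards. You correctly observe that the adaptive procedure only enlarges the unit antiball, $\cA^{(k-1)} \supseteq \cA^{(0)}$, but you then conclude $a^{(k-1)}(x) \le a(x)$. For \emph{antinorms} the unit antiball is the superlevel set $\cA = \{x \in K : a(x) \ge 1\}$, so a larger antiball means a \emph{larger} antinorm: if $x/c \in \cA^{(0)}$ with $c = a^{(0)}(x)$, then $x/c \in \cA^{(k-1)}$, whence $a^{(k-1)}(x) \ge c$. (This is the opposite of the norm case, where enlarging the unit ball shrinks the norm; indeed the vertices added are exactly those with $a^{(k-1)}(Py) \le 1$, i.e., points \emph{outside} the current antiball.) The paper's entire proof is the single observation $a^{(0)} \le a^{(1)} \le \cdots \le a^{(k)}$ — which, tellingly, is the direction you yourself write at the end of your second paragraph, contradicting your earlier claim. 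With the correct direction everything is immediate: $q^{(k-1)}(P) \ge q(P)$ pointwise, so the membership test $q^{(k-1)}(P) < s_{k-1}-\delta$ is \emph{stricter} than $q(P) < s_{k-1}-\delta$ and $S_k^\ast \subseteq S_k$ follows by induction on the prefix; note that with your reversed inequality the same reasoning would yield the wrong inclusion $S_k \subseteq S_k^\ast$, so your stated conclusion does not follow from your stated premise.

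The reversed inequality also manufactures the "main obstacle" you describe for $t_k^\ast \ge t_k$, which does not exist. You additionally misquote the definition: $t_k^\ast$ takes $\inf_{P \in S_k} q^{(k-1)}(P)$ over the \emph{original} set $S_k$, not over $S_k^\ast$. Since the index set is the same and $q^{(k-1)} \ge q$ pointwise, $\inf_{P \in S_k} q^{(k-1)}(P) \ge \inf_{P \in S_k} q(P)$ directly, and $t_k^\ast \ge t_k$ follows by induction with no auxiliary lemma about truncations. Your remaining points — that $\check\rho(\cF) \ge t_k^\ast$ follows by rerunning the argument of \cref{lemma.a2} with the fixed antinorm $a^{(k-1)}$, and that $s_k - t_k^\ast$ is non-increasing, bounded below, and dominated by $s_k - t_k \to \delta$ — are correct and consistent with the paper, but they only become available once the monotonicity is stated in the right direction.
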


\begin{proof}
	This follows directly from the definition of $a^{(k)}$. Indeed, we obtain an increasing sequence of antinorms:
	\[
	a^{(0)}(P) \le \cdots \le a^{(k-1)}(P) \le a^{(k)}(P) \qquad \text{for every } P \in S_k.
	\]
\end{proof}

For numerical implementation, see \cref{subsec:adaptive_antinorm_algorithm}. We conclude this section with observations on the theoretical results:
\begin{itemize}
	\item While \cref{prop:improvement} only guarantees $\delta^\ast \le \delta$, empirical evidence suggests the adaptive procedure provides significantly more precise bounds.
	
	\item For simplicity, \cref{prop:improvement} defines $a^{(k)}$ only at the end of step $k$; however, the numerical algorithm refines the antinorm after incorporating each vertex, typically resulting in faster convergence - refer to \cref{subsec:adaptive_antinorm_algorithm}.
\end{itemize}

\subsection*{Eigenvector-based adaptive methodology} 

Inspired by the polytopic algorithm \cite{GP13}, we propose a potential enhancement to our adaptive procedure. If $\Sigma$ is a s.l.p. for $\cF$, the extremal antinorm can be obtained starting from the leading eigenvector of $\Sigma$. Taking this into account, we propose the following modification:
\begin{enumerate}[label=(\roman*)]
	\item At step $k$, identify all $P \in S_k^\ast$ that improve the upper bound.
	
	\item Choose a scaling factor $\vartheta > 1$. For each identified product $P$:
	\begin{itemize}
		\item let $v_P$ be its leading eigenvector;
		\item compute its antinorm $a(v_P)$;
		\item incorporate the rescaled vector $w_P := v_p / (\vartheta \cdot a(v_P))$ into the vertex set.
	\end{itemize}
\end{enumerate}

\begin{remark}
	While including eigenvectors in the vertex set may enhance convergence, this improvement is not guaranteed, unlike the adaptive procedure in \cref{prop:improvement}. Furthermore, the choice of the scaling parameter $\vartheta$ can significantly impact the algorithm's performance, as mentioned in \cref{subsec:algorithm_enhanced_eg}.
\end{remark}

\subsection{Continuity of the LSR and extendibility of our methodologies} \label{subsec:perturbations}

The continuity properties of the LSR are not fully understood. In our context, even the assumption of non-negativity for $\cF$ does not guarantee the continuity of $\check \rho$ in a neighborhood of $\cF$. Discontinuities may occur outside invariant cones (see, e.g., \cite{Protasov2006} and \cite{KashinKonyaginTemlyakov2023}). The following example, adapted from \cite{BochiMorris2015}, illustrates this issue:

\begin{example}
	Consider the family $\cF=\{A_1,A_2\}$ where
	\[
	A_1 = \begin{pmatrix}
		2 & 0 \\ 0 & 1/8
	\end{pmatrix} \quad \text{and} \quad A_2 = \begin{pmatrix}
		1 & 0 \\ 0 & 1
	\end{pmatrix},
	\]
	with $\check \rho(\cF)=1$. For $N \in \mathbb N$, consider the perturbed family $\cF_N:=\{A_1,R_{2\pi/N}\}$, where $R_{2\pi/N}$ is the rotation matrix of angle $2\pi/N$. For $m \ge 1$, we have
	\[
	(A_1)^m (R_{2\pi/N})^N = \begin{pmatrix} 2^m & 0 \\ 0 & 8^{-m} \end{pmatrix} \begin{pmatrix} 0 & - 1 \\ 1 & 0 \end{pmatrix} = \begin{pmatrix}
		0 & -2^m \\ 8^{-m} & 0
	\end{pmatrix},
	\]
	implying $\check\rho(\cF_N) = 1/2$ for all $N \in \mathbb N$, thus demonstrating the discontinuity of $\check \rho$ at $\cF$.
\end{example}

However, continuity can be ensured under a more restrictive condition: the existence of an \textit{invariant pair of embedded cones}, a concept introduced in \cite{ProtasovJungersBlondel2010}.

\begin{definition}
	A convex closed cone $K'$ is embedded in a cone $K$ if $K' \setminus \{0\} \subset \operatorname{int} K$. In this case, $\{K,K'\}$ is called an embedded pair.
\end{definition}

\begin{definition}
	An embedded pair $\{K,K'\}$ is an \textbf{invariant pair} for a matrix family $\cF$ if both $K$ and $K'$ are invariant for $\cF$.
\end{definition}

\begin{theorem} \label{thm.convergence_perturb}
	Let $\cF \subset \R^{d,d}$ be compact and $\cF_\eps$ be a sequence converging to $\cF$ in the Hausdorff metric. If $\cF$ admits an invariant pair of cones, then
	\begin{equation}\label{eq:lsr_continuity}
		\check \rho(\cF) = \lim_{\eps \to 0} \check \rho(\cF_\eps).
	\end{equation}
\end{theorem}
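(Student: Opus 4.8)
The plan is to establish \eqref{eq:lsr_continuity} by proving the two one-sided inequalities $\limsup_{\eps\to0}\check\rho(\cF_\eps)\le\check\rho(\cF)$ and $\liminf_{\eps\to0}\check\rho(\cF_\eps)\ge\check\rho(\cF)$ separately. The first (upper semicontinuity) is the easy direction and does not use the embedded pair: for each degree $k$ the map $\cF\mapsto\bar\rho_k(\cF)=\min_{P\in\Sigma_k}\rho(P)^{1/k}$ is continuous (spectral radius depends continuously on matrix entries, and we take a finite min over the finitely many index words of length $k$), hence $\limsup_{\eps\to0}\check\rho(\cF_\eps)\le\limsup_{\eps\to0}\bar\rho_k(\cF_\eps)=\bar\rho_k(\cF)$; taking the infimum over $k$ and using $\check\rho(\cF)=\inf_{k\ge1}\bar\rho_k(\cF)$ gives the claim. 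Note this half needs no hypothesis at all — it is the $\liminf$ half where the invariant pair is essential, as the Bochi–Morris rotation example in the excerpt shows.

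For the lower bound I would use the embedded pair $\{K,K'\}$ to manufacture a \emph{robust} antinorm on $K$ and transfer it to the perturbed families. Fix $\eta>0$ and pick, via Theorem \ref{thm:extremal_antinorm}, an antinorm $a(\cdot)$ on $K$ with $a(\cF)\ge\check\rho(\cF)-\eta$; by a standard approximation one may take $a$ to be a polytope antinorm whose vertices lie in $K'\setminus\{0\}\subset\operatorname{int}K$ (this is exactly why the embedded inner cone is used — it gives a positive antinorm all of whose relevant structure is compactly contained in $\operatorname{int}K$, making $a$ positive and its operator-antinorm values stable under perturbation). Because each $A_i\in\cF$ maps $K'$ into $K'$ and hence $A_i(\operatorname{conv}V)\subseteq\operatorname{int}K$ with positive ``slack,'' the inequality $a(A_i x)\ge (\check\rho(\cF)-\eta)\,a(x)$ holds with a strict margin on the compact antisphere; for $\cF_\eps$ close enough to $\cF$ in Hausdorff distance, each $B_i\in\cF_\eps$ still satisfies $a(B_i x)\ge(\check\rho(\cF)-2\eta)\,a(x)$ on $K'$, and $\cF_\eps$ still leaves $K'$ (hence a slightly shrunken proper cone on which $a$ is positive) invariant. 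Applying the lower-bound inequality \eqref{eq.lowerbound1} — or rather its restriction to the inner cone — yields $\check\rho(\cF_\eps)\ge a(\cF_\eps)\ge\check\rho(\cF)-2\eta$ for all small $\eps$, whence $\liminf_{\eps\to0}\check\rho(\cF_\eps)\ge\check\rho(\cF)-2\eta$, and letting $\eta\to0$ finishes the proof.

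The main obstacle is the second paragraph's transfer step: one must be careful that the antinorm built for $\cF$ remains a \emph{bona fide positive antinorm} for the perturbed family and that the lower-bound estimate $a(\cF_\eps)\le\check\rho(\cF_\eps)$ can legitimately be invoked for $\cF_\eps$. This requires (i) that $\cF_\eps$ actually preserves a common proper cone — guaranteed for small $\eps$ because $K'\setminus\{0\}\subset\operatorname{int}K$ is an open condition, so a Hausdorff-small perturbation of operators each sending $K'$ strictly inside itself still sends a fixed slightly-shrunk cone into itself — and (ii) a uniform lower bound on $a$ over the antisphere together with an operator-norm bound on the $B_i$, so that the perturbation $\|B_i-A_i\|$ changes $a(B_ix)$ by at most $O(\eps)$ uniformly in $x\in\cA'$. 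Both are quantitative but routine once the geometric picture — a fixed antinorm with vertices safely in the interior of the inner cone — is set up; the role of Theorem \ref{thm:extremal_antinorm} is only to get the initial near-extremal antinorm, after which everything is a stability argument for a fixed finite polytope.
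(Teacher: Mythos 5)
First, a point of reference: the paper does not prove \cref{thm.convergence_perturb} itself; it only cites \cite{Jungers2012AsymptoticProperties}, so there is no internal proof to compare against. Your upper-semicontinuity half is correct and standard: $\check\rho=\inf_k\bar\rho_k$ with each $\bar\rho_k$ continuous (for compact families the minimum is over a compact set of products), so $\check\rho$ is upper semicontinuous with no hypothesis needed.

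The lower-semicontinuity half, however, has a genuine gap at exactly the step you label ``routine,'' namely item (i): the claim that for small $\eps$ the perturbed family $\cF_\eps$ still preserves a fixed proper cone on which the lower bound \eqref{eq.lowerbound1} (and the supermultiplicativity behind it) can be invoked. The hypothesis only gives $A(K')\subseteq K'$ for $A\in\cF$ --- it does \emph{not} say each $A$ sends $K'$ strictly inside itself, and your openness argument collapses precisely when some $A$ acts as (or close to) the identity on $K'$: an arbitrarily small perturbation of such an $A$ need not preserve \emph{any} proper cone, and without a common invariant cone for $\cF_\eps$ the quantity $a(\cF_\eps)$ is not a lower bound for $\check\rho(\cF_\eps)$, so the transfer step fails. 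This is not a fixable technicality: the Bochi--Morris family $\cF=\{\operatorname{diag}(2,1/8),\,I\}$ used in the paper's own discontinuity example \emph{does} admit an invariant embedded pair in the sense of the definitions given (take $K=\{x_1\ge 0,\ |x_2|\le\alpha x_1\}$ and $K'=\{x_1\ge 0,\ |x_2|\le\beta x_1\}$ with $0<\beta<\alpha$; both are proper, both are invariant for $\operatorname{diag}(2,1/8)$ and for $I$, and $K'\setminus\{0\}\subset\operatorname{int}K$), yet $\check\rho$ is discontinuous there because the perturbations $R_{2\pi/N}$ of $I$ preserve no proper cone. Hence no argument can derive \eqref{eq:lsr_continuity} from the hypothesis as literally stated; the continuity theorem in the cited reference requires the embedded pair to be invariant for the approximating families $\cF_\eps$ as well (which is how it is used later in the paper, where the $\cF_\eps$ are strictly positive). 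Under that stronger hypothesis your second paragraph can be carried through --- the compactness and uniform-continuity estimates for a fixed polytope antinorm supported in $K'\setminus\{0\}\subset\operatorname{int}K$ are sound, though the intermediate claim that a near-extremal antinorm can be replaced by a polytope antinorm with vertices in $K'$ without losing the bound $a(\cF)\ge\check\rho(\cF)-\eta$ also deserves an explicit argument rather than an appeal to ``standard approximation.''
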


The proof of this result is given in \cite{Jungers2012AsymptoticProperties}. While identifying an invariant pair of cones is generally challenging, it is always possible and computationally inexpensive for strictly positive matrices:

\begin{lemma} \label{ch:posit}
	If $A > 0$ for all $A \in \cF$, then $\cF$ admits an invariant pair of cones.
\end{lemma}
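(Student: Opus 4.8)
The plan is to exhibit an explicit invariant pair whose outer cone is $K=\R_+^d$. Since every $A\in\cF$ has non-negative entries we have $A(\R_+^d)\subseteq\R_+^d$, so the proper cone $\R_+^d$ is invariant for $\cF$, and it only remains to construct a proper cone $K'$ with $K'\setminus\{0\}\subset\operatorname{int}(\R_+^d)$ that is invariant for $\cF$. The guiding idea is that a strictly positive matrix maps the whole orthant into a fixed cone of ``uniformly balanced'' vectors. For $A\in\cF$ put $\mu(A):=\min_{p,q}A_{pq}>0$ and $M(A):=\max_{p,q}A_{pq}$, and set
\[
\beta:=\max\Bigl\{\,2,\ \max_{A\in\cF}M(A)/\mu(A)\,\Bigr\},\qquad
K':=\Bigl\{\,x\in\R_+^d\ :\ \max_{1\le p\le d}x_p\le\beta\min_{1\le p\le d}x_p\,\Bigr\}.
\]
Here $\beta<+\infty$ because $\cF$ is finite; the same definition works for a compact family of strictly positive matrices, using continuity of $A\mapsto\mu(A),M(A)$ and positivity of $\mu$ on the compact set $\cF$.

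I would then carry out three short verifications. First, $K'$ is a proper cone: it is closed and positively homogeneous by inspection; it is convex because $x\mapsto\max_p x_p-\beta\min_p x_p=\max_p x_p+\beta\max_p(-x_p)$ is a convex function, so $K'$ is the intersection of $\R_+^d$ with one of its sublevel sets; it is salient since $K'\subseteq\R_+^d$; and it is solid because $\beta>1$ forces $\mathbf 1=(1,\dots,1)$ into $\operatorname{int}(K')$. Second, $K'$ is embedded in $\R_+^d$: if $x\in K'\setminus\{0\}$ then $\max_p x_p>0$, hence $\min_p x_p\ge\max_p x_p/\beta>0$, so $x\in\operatorname{int}(\R_+^d)$. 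Third, $K'$ is invariant for $\cF$: for $A\in\cF$ and $y\in\R_+^d\setminus\{0\}$ one has $\mu(A)\|y\|_1\le(Ay)_p\le M(A)\|y\|_1$ for every index $p$, whence $\max_p(Ay)_p\le M(A)\|y\|_1\le\beta\,\mu(A)\|y\|_1\le\beta\min_p(Ay)_p$; thus $Ay\in K'$, and since $A0=0\in K'$ we get $A(\R_+^d)\subseteq K'$, in particular $A(K')\subseteq K'$. Putting the three points together, $\{\R_+^d,K'\}$ is an invariant pair of embedded cones for $\cF$, as claimed.

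I do not expect a genuine obstacle, since the argument is elementary; the only point demanding a little care is ensuring that $K'$ is really a proper cone (solid, not a lower-dimensional face), which is exactly why $\beta$ is taken $\ge2$ so that $\beta>1$ — this also absorbs the degenerate case in which every $A\in\cF$ has constant entries and $M(A)/\mu(A)=1$. As a (non-essential) refinement, choosing $\beta$ strictly larger than $\max_{A\in\cF}M(A)/\mu(A)$, for instance $\beta:=1+\max\{1,\max_{A\in\cF}M(A)/\mu(A)\}$, makes each $A\in\cF$ send $\R_+^d\setminus\{0\}$ into $\operatorname{int}(K')$, so that $\{\R_+^d,K'\}$ is in fact a strictly invariant pair, although this is not needed for \cref{thm.convergence_perturb}.
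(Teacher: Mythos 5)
Your proof is correct, and it is built around the same object as the paper's: the cone of ``balanced'' vectors $K_c=\{x\in\R_+^d : \max_i x_i\le c\min_i x_i\}$ embedded in $\R_+^d$. The differences are in the details and they are worth noting. The paper uses the sharper column-wise constant $c(A)=\max_j(\max_i a_{ij}/\min_i a_{ij})$ and outsources the invariance $A(\R_+^d)\subseteq K_{c(A)}$ to \cite[Corollary 2.14]{ProtasovJungersBlondel2010}, whereas you use the coarser global ratio $M(A)/\mu(A)\ge c(A)$ and verify everything from scratch via $\mu(A)\|y\|_1\le (Ay)_p\le M(A)\|y\|_1$; this makes your cone (weakly) larger but the argument self-contained, and your checks that $K'$ is proper, embedded, and invariant are all sound. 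More importantly, you aggregate over the family with a \emph{maximum} of the ratios, which is what joint invariance actually requires: since $A(\R_+^d)\subseteq K_{c(A)}$, one needs $c\ge c(A)$ for every $A\in\cF$ to conclude $A(K_c)\subseteq K_c$. The paper's text sets $c:=\min_{A\in\cF}c(A)$, which does not support that conclusion (for a matrix with $c(A)>c$ the image need not land back in $K_c$) and appears to be a slip for $\max$; your version avoids this issue. Your closing remark that a strictly larger $\beta$ yields a strictly invariant pair is also correct, though not needed here.
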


\begin{proof}
	Let $A > 0$ and define $c(A) := \max_{1 \le j \le d} \left( \frac{\max_{1\le i \le d} a_{ij} }{\min_{1\le i \le d} a_{ij} } \right)$. By \cite[Corollary 2.14]{ProtasovJungersBlondel2010}, the cone defined as
	\[
	K_{c(A)} := \left\{ x \in \R_+^d \: : \: \max_{1\le i \le d}(x_i) \le c(A) \min_{1\le i \le d}(x_i) \right\}.
	\]
	is invariant for $\cF$ and embedded in $K:=\R_+^d$ by construction. Since $\cF$ is compact, $c:=\min_{A \in \cF} c(A)$ is well-defined. The corresponding cone $K' := K_c$ forms an invariant pair with $\R_+^d$, concluding the proof.
\end{proof}

This result has significant implications for the application and convergence of our algorithms. When $\cF$ includes matrices with zero entries, the assumptions of \cref{thm.gelfand} may not hold. However, we can address this issue by considering perturbed families of the form:
\[
\cF_\eps := \{A_1 + \eps \Delta_1,\ldots,A_m+\eps \Delta_m\}, \quad \text{where } \|\Delta_i\|_F=1 \text{ and } \Delta_i > 0.
\]
In these perturbed families, all matrices are positive, ensuring the convergence of our algorithms by the Perron-Frobenius theorem. Moreover, for every $\eps >0$, $\cF_\eps$ satisfies the assumption of \cref{ch:posit}, yielding:
\[
\check \rho(\cF)  = \lim_{\eps \to 0^+} \check \rho(\cF_\eps).
\]
This approach proves valuable in critical cases, such as the one discussed in \cref{subsec:critical_example}. However, there is a potential trade-off: for extremely small $\eps$, the algorithm may slow down significantly, leading to high computational costs. Empirical evidences suggests that the difference $\check \rho(\cF_\eps) - \check \rho(\cF)$ is of order $\eps$; hence, to observe this difference, we require $\delta \simeq \eps$, which is computationally expensive as $\eps \to 0$.

\section{Algorithms} \label{sec:numerical_implementation}

This section focuses on the numerical implementation of the methodologies introduced in \cref{sec:algorithm}. We primarily address:
\begin{enumerate}[label=(\roman*)]
	\item \textit{Optimization of intermediate calculations} to minimize overall computational cost; for instance, when computing \eqref{eq:definition_q}.
	
	\item \textit{Development of efficient adaptive algorithms} that exploit the ideas from \cref{subsec:algorithm_enhanced} and \cref{subsec:algorithm_enhanced_eg} to refine the initial antinorm at each step.
\end{enumerate}

\subsection{Standard Algorithm (S)} \label{subsec:algorithm_s}

Let $\cF = \{A_1,\ldots,A_m\}$ be a family of non-negative real matrices for which $\mathbb R^d_+$ is an invariant cone. This section implements the basic methodology described in \cref{subsec:basic_methodology}, which utilizes a \textbf{fixed polytope antinorm} (\cref{def:pol_antinorm}).

We describe in details the computational cost of a polytope antinorm evaluation in \cref{subsec:adaptive_antinorm_algorithm}. It is important to note, however, that it requires solving a number of linear programming (LP) problems equal to the cardinality of the vertex set $V$. Conversely, for the $1$-antinorm, denoted $a_1(\cdot)$, we have the following formula:
\begin{equation}\label{eq.antinorm_1}
	\scalebox{1.0}{$a_1(A) = \min_{1 \le j \le d} \, \sum_{i=1}^d a_{ij}, \qquad \text{for every } A = (a_{ij})_{1\le i,j \le d} \in \R^{d,d}.$}
\end{equation}
Using \eqref{eq.antinorm_1} in the algorithm offers a significant computational advantage: calculating $a_1(A)$ becomes inexpensive as it does not require solving any LP problems. However, it is important to observe that the $1$-antinorm is often unsuitable; see \cref{subsec:il_ex} for an example of the $1$-antinorm converging too slowly, making it impractical.

\subsubsection{Pseudocode} \label{subsec:pseudocode_algorithm_s}

Algorithm \textbf{(S)} requires the following inputs: the elements of the family $\cF = \{A_1,\ldots,A_m\}$ and these parameters:

\begin{itemize}
	\item \textit{accuracy} ($\delta > 0$) as specified in \cref{thm.main}, and \textit{computation limit} ($M > 0$), the maximum allowed number of antinorm evaluations.
	
	\item \textit{vertex set} (V) which contains, as columns, the vertices corresponding to the initial polytope antinorm.
\end{itemize}

\begin{algorithm}[ht!]
	\small
	\caption{Algorithm (S) for Computing the Lower Spectral Radius}
	\label{alg.1}
	\begin{algorithmic}[1]
		\STATE{Set the initial lower and upper bounds for the LSR: $L=0$ and $H=+\infty$ \COMMENT{lower and upper bound initialized, respectively, to $0$ and $+\infty$}}
		\STATE{Let $S_1 := \cF$ \COMMENT{to store matrices of degree $1$, following the notation of \cref{thm.main}} and set $m = \# S_1$ \COMMENT{number of elements in the family $\cF$}}
		
		\FOR{$i=1$ to $m$}
		\STATE{compute the antinorm $a(A_i)$ and the spectral radius $\rho(A_i)$}
		\STATE{set $l(i) = a(A_i)$ \COMMENT{the vector $l$ stores all candidate lower bounds in the current step} and $H = \min\{H, \rho(A_i)\}$ \COMMENT{updates the upper bound if $\rho(A_i)$ improves it}}
		\ENDFOR
		\STATE{compute the lower bound after step one as $L = \min_{1 \le i \le m} l(i)$}
		\STATE{\textbf{Setting of iterations parameters:}}
		\STATE{set $\texttt{n} = 1$ \COMMENT{current degree}, $\texttt{n}_{op} = m$ \COMMENT{total number of antinorm evaluations}, $\texttt{J} = m$ \COMMENT{cardinality of $S_{\texttt{n}}$, which is $m$ after the first step}, and $\texttt{J}_{max} = \texttt{J}$ \COMMENT{to keep track of the maximum value of \texttt{J} in the main loop}}
		\STATE{set $\ell_{opt}=1$ \COMMENT{i.e., the degree for which the gap between lower and upper bounds is optimal} and $\ell_{slp}=1$ \COMMENT{i.e., degree yielding the optimal upper bound}}
		\WHILE{$H - L \ge \delta \quad \&\& \quad \texttt{n}_{op} \le M$}
		\STATE{set $H_{old} = H$, $L_{old} = L$ \COMMENT{store the current bounds} and $\texttt{J}_{new} = 0$}
		\STATE{set $\texttt{n}=\texttt{n}+1$ and initialize $S_{\texttt{n}} = [\cdot]$ \COMMENT{empty set to store matrix products of degree \texttt{n} for the next iteration, following the notation of \cref{thm.main}}}
		\FOR{$k = 1$ to $\texttt{J}$ \COMMENT{iterate on all products of degree $\texttt{n}-1$}}
		\FOR{$i = 1$ to $m$ \COMMENT{iterate on all elements of $\cF$}}
		\STATE{let $X_k$ be the $k$-th element in $S_{\texttt{n}-1}$ \COMMENT{note: $S_{\texttt{n}-1}$ has cardinality \texttt{J}}}
		\STATE{set $q = \texttt{J}_{new} + 1$ and consider the product $Y := X_k A_i$}
		\STATE{compute the antinorm $a(Y)$ and the spectral radius $\rho(Y)$}
		\STATE{set $l_{new}(q) = \max\{ l(k), (a(Y))^{1/\texttt{n}}\}$ and $H := \min\{ H, (\rho(Y))^{1/\texttt{n}} \}$}
		
		\IF{$l_{new}(q) < H - \delta$ \COMMENT{criterion for $Y$ to belongs to $S_{\texttt{n}}$}}
		\STATE{increase $\texttt{J}_{new} = \texttt{J}_{new} + 1$ and compute $L = \min\{L, l_{new}(q)\}$}
		\STATE{include $Y$ in the set $S_{\texttt{n}}$ \COMMENT{which is used in the next \textbf{while} iteration}}
		\ENDIF
		\ENDFOR
		\ENDFOR
		
		\STATE{compute $L = \max\{ L_{old}, \min\{ L, H - \delta \}\}$ and set $l := l_{new}$}
		\STATE{set $\texttt{n}_{op} = \texttt{n}_{op} + \texttt{J} \cdot m$, $\texttt{J} = \texttt{J}_{new}$ and $\texttt{J}_{max} = \max\{\texttt{J}, \texttt{J}_{max} \}$}
		
		\IF{$H - L < H_{old} - L_{old}$}
		\STATE{set $\ell_{opt} := \texttt{n}$ \COMMENT{store the degree yielding the best bounds gap}}
		\ENDIF
		\IF{$H<H_{old}$}
		\STATE{set $\ell_{slp} := \texttt{n}$ \COMMENT{store the degree yielding the best upper bound so far}}
		\ENDIF
		\ENDWHILE
		\STATE{\textbf{return} $\texttt{lsr}:=(L, H)$ and
			$\texttt{p} = (\ell_{opt}, \ell_{slp}, \texttt{n}, \texttt{n}_{op}, \texttt{J}_{max})$ \COMMENT{performance metrics}}
	\end{algorithmic}
\end{algorithm}

\begin{remark} \label{remark:alg_1}
	The algorithm outputs the best upper/lower bounds found, along with a \textbf{performance metric} \texttt{p} containing:
	
	\begin{itemize}
		\item $\ell_{opt}$: the optimal product degree (minimizing the gap $H-L$ between upper and lower bounds);
		\item $\ell_{slp}$: the product degree achieving the optimal (smallest) upper bound;
		\item $\texttt{n}$: the maximum product length considered by the algorithm;
		\item $\texttt{n}_{op}$: the number of antinorm evaluations performed;
		\item $\texttt{J}_{max}$: the maximum number of elements in any $S_k$, $k \in \{1,\ldots,\texttt{n}\}$. A small $\texttt{J}_{max}$ indicates the algorithm's ability to explore high-degree products while maintaining a reasonable number of antinorm evaluations.
	\end{itemize}
\end{remark}

To clarify the numerical implementation of \cref{alg.1}, we now highlight the following key lines of the pseudocode provided below:
\begin{enumerate}[label=(\alph*)]
	\item The initial step (lines 3--6) computes preliminary lower and upper bounds using all elements of $\cF$.
	
	\item The lower bound is updated directly (line 7) setting $L = \min_i l(i)$, allowing reuse of intermediate calculations stored in $l$ for efficient computation of \eqref{eq:definition_q} at line 19 as $\max\{l(k),a(Y)^{1/\texttt{n}}\}$.
	
	\item The iteration parameter $\texttt{J}$ tracks the cardinality of $S_{\texttt{n}}$ (\texttt{n} being the current degree), determining the length of the \textbf{for} loop in line 14.
	
	\item Restricting the number of antinorm evaluations (by requiring $\texttt{n}_{op} \le M$) ensures that the loop terminates even if the target accuracy $\delta$ cannot be achieved within reasonable time.
	
	\item In line 13, the set $S_{\texttt{n}}$ is initialized (as defined in \cref{thm.main}). This set stores products of length \texttt{n} for constructing products of length $\texttt{n}+1$ in the next step.
	
	\item The \texttt{if} statement (lines 20--23) determines whether $Y:=X_k A_i \in \Sigma_{\texttt{n}}$ also belongs to $S_{\texttt{n}}$. If so, it is stored for the next step in $S_{\texttt{n}}$; if not, it is discarded.
	
	\item The antinorm $a(Y)$ is computed using either \eqref{eq.antinorm_1} for the $1$-antinorm, or the method outlined in the subsequent \cref{subsec:adaptive_antinorm_algorithm} and implemented in the appendix (see \cref{supplement_sec:comp_antinorm}).
\end{enumerate}

\begin{remark} \label{remark:slp_vs_opt}
	The metric $\ell_{slp}$ tracks upper bound improvements, indicating candidate s.l.p. degree. In contrast, $\ell_{opt}$ updates when the gap between lower and upper bounds narrows, regardless of which bound changes. As a result, $\ell_{opt}$ is often larger than $\ell_{slp}$. For example, slow lower bound improvements can lead to $\ell_{opt} \gg \ell_{slp}$.
\end{remark}

\begin{remark} \label{rmk:choice_1_antinorm}
	The numerical implementation often uses $a_1$ as the intial (fixed) polytope antinorm, due to:
	\begin{itemize}
		\item Numerical stability of summing non-negative elements.
		
		\item Efficient computation of the antinorm of any matrix using \eqref{eq.antinorm_1}.
		
		\item Due to \cref{thm.gelfand}, the 1-antinorm is well-suited when $K=\R_+^d$ is a strictly invariant cone. This happens, for instance, when all matrices are positive.
	\end{itemize}
\end{remark}

Despite the guaranteed convergence of \cref{alg.1} to the target accuracy $\delta$, practical limitations exist. For example, using a fixed antinorm can significantly slow down lower bound convergence, resulting in excessively long computational times. Alternative strategies are the following:
\begin{enumerate}[label=(\roman*)]
	\item Initializing \cref{alg.1} with a different initial antinorm. For example, if $v$ is the leading eigenvector of some $A \in \cF$, the polytope antinorm (see \cref{def:pol_antinorm}) corresponding to the vertex set given by $v$ only is often effective.
	
	\item Developing an adaptive antinorm refinement algorithm (detailed in \cref{subsec:adaptive_antinorm_algorithm}), based on the theoretical results in \cref{prop:improvement}.
\end{enumerate}

\subsection{Adaptive Algorithm (A)} \label{subsec:adaptive_antinorm_algorithm}

In this section, we develop the adaptive algorithm which is outlined in \cref{subsec:algorithm_enhanced} and draws inspiration from the polytopic algorithm. First, however, we briefly recall how to compute a polytope antinorm:

\subsubsection*{Polytope antinorm evaluation} 

Let $a(\cdot)$ be a polytope antinorm with a corresponding minimal vertex set $V = \{v_1,\ldots,v_p\}$ (see \cref{def:pol_antinorm}). Given a matrix $A$, to compute $a(A)$ we utilize the following formula:
\begin{equation}\label{eq.antinorm_generic}
	a(A) = \min_{v \in V} a(Av) = \min_{1\le j \le p} a(Av_j).
\end{equation}
This allows us to focus on determining $a(Av_j)$ for $j \in \{1,\dots,p\}$. Specifically, for $z := Av_j$, we solve the following LP problem:
\begin{equation}\label{eq:lp_problem}
	\min c_0 \quad \text{subject to } \begin{cases} c_0 z \ge \sum_{i=1}^p c_i v_i
		\\[.4em] \sum_{i=1}^p c_i \ge 1 \text{ with } c_i \ge 0 \text{ for every } i. \end{cases}
\end{equation}
The solution, denoted by $c_{V,z} := \min c_0$, is non-negative and potentially $+\infty$ if the system of inequalities has no solution. Consequently, we define $a(z)$ as:
\begin{equation} \label{eq:antinorm_vector}
	a(z) = \begin{cases}
		0 & \text{if } c_{V,z} = + \infty, \\
		+ \infty & \text{if } c_{V,z} = 0, \\
		1/c_{V,z} & \text{otherwise}.
	\end{cases}
\end{equation}
After solving all LP problems, we apply \eqref{eq.antinorm_generic} to determine $a(A)$. For an efficient numerical implementation, refer to \cref{supplement_sec:comp_antinorm}.

\begin{remark}
	Vertex pruning is crucial: minimizing the number of vertices reduces both the quantity and complexity of LP problems.
\end{remark}

To simplify the process of adding new vertices, we assume $\cF$ is normalized (i.e., its lower spectral radius is one). Our strategy is as follows:
\begin{enumerate}[label=(\arabic*)]
	\item Let $a^{(k-1)}$ be the polytope antinorm corresponding to $V_{k-1}$, as defined in \cref{prop:improvement}. Set $V_k := V_{k-1}$ and proceed to the $k$-th step. For each $P \in S_k^\ast$ evaluated by the algorithm, the candidate vertex $z := Av_i$ satisfies:
	\[
	a^{(k-1)}(P) = a^{(k-1)}(z).
	\]
	
	\item Include $z$ in $V_k$ based on the following criterion:
	\begin{itemize}
		\item If $a^{(k-1)}(z) > 1$, discard $z$ as it falls outside the polytope.
		\item If $a^{(k-1)}(z) \le 1$, incorporate $z$ into the vertex set.
	\end{itemize}
	
	\item After the $k$-th step, up to $\# S_k^\ast$ new vertices may be added, some potentially redundant. Apply the following pruning procedure:
	
	\begin{enumerate}[label=(\alph*)]
		\item Let $V_k = \{v_1,\ldots,v_q\}$, where $q > \# V_{k-1}$.
		\item For each $v_i \in V_k$, define $V_k^i$ as the matrix derived from $V_k$ by omitting $v_i$, and let $a_{v_i}(\cdot)$ be its corresponding polytope antinorm. If
		\begin{equation}\label{eq.antinorm_criterion}
			a_{v_i}(v_i) \ge 1 + \texttt{tol}
		\end{equation}
		remove $v_i$ from $V_k$; otherwise, retain $v_i$.
		\item Repeat this process for each $v_i \in V_k$ until either a minimal set is achieved or the rank of $V_k$ reduces to $1$.
	\end{enumerate}
	
	\item Define $a^{(k)}(\cdot)$ as the polytope antinorm with vertex set $V_k$. Proceed to the $(k+1)$-th step, repeating the procedure from $(1)$.
\end{enumerate}

In the numerical implementation (see \cref{alg.supplement.a} and \cref{supplement_sec:adaptive_alg}), we enhance this strategy. Each time we add a vertex, we immediately update the antinorm used in step (1) for the next vertex. This generates a non-decreasing sequence of antinorms:
\[
a_0^{(k-1)} \to a^{(k-1)}_1 \to \cdots \to a^{(k-1)}_\ell, \qquad \text{where } a^{(k-1)}_\ell = a^{(k)}.
\]
This approach offers an additional advantage: the algorithm might not add some redundant vertices before the pruning procedure even begins.

\begin{remark}\label{rmk:choice_antinorm}
	The initial antinorm choice significantly impacts the adaptive algorithm's performance. For instance, vertices defining the $1$-antinorm, due to their position on the boundary of the cone $\R_+^d$, often restrict the algorithm's ability to refine the polytope. This issue is clearly illustrated by \cref{fig:5.2} and the example provided in \cref{subsec:il_ex}.
\end{remark}

\begin{remark}\label{rmk:slp}
	Identifying s.l.p. candidates is straightforward to implement numerically, owing to their connection with optimal upper bounds - see \cref{supplementsec:identification_slp} for the pseudocode. Once the algorithm establishes $\ell_{slp}$ (see \cref{remark:alg_1}), it suffices to consider all products of degree $\ell_{slp}$ and compare their spectral radii.
\end{remark}

\subsection{Adaptive Algorithm (E)} \label{subsec:algorithm_enhanced_eg}

We now present an extension of Algorithm \textbf{(A)} that potentially enhances the polytope antinorm by exploiting products that improve the upper bound. Within the main \textbf{while} loop of \cref{alg.1}, we update the current upper bound (line 19) using the following relation:
\begin{equation}\label{eq:adpative_1}
	H = \min\{ H , (\rho(Y))^{1/\texttt{n}}\}.
\end{equation}
This update allows us to identify all products $Y = X_k A_i$ that improve the current upper bound - specifically, those for which the minimum in \eqref{eq:adpative_1} is given by the spectral radius. We then use these products to refine the antinorm. More precisely, as discussed in \cref{subsec:algorithm_enhanced}, Algorithm \textbf{(E)} is derived from Algorithm \textbf{(A)} by implementing the following key modifications:
\begin{enumerate}[label=(\arabic*)]
	\item Set the scaling parameter $\vartheta$. While the optimal value is context-dependent, empirical evidence suggests that setting $\vartheta \in (1,1+\eps)$, where $\eps \ll 1$, is effective in most cases.
	
	\item Add vertices to the polytope antinorm as in Algorithm \textbf{(A)}. Additionally, when a product $P \in S_k^\ast$ improves the current upper bound according to \eqref{eq:adpative_1}, include in the vertex set the rescaled leading eigenvector $w$ of $P$:
	\[
	\widetilde w := (a(w) \cdot \vartheta)^{-1} w.
	\]
	
	\item Perform vertex pruning identical to Algorithm \textbf{(A)}. Proceed to the next step with the updated antinorm.
\end{enumerate}

The pseudocode of this adaptive algorithm is briefly discussed in the appendix (\cref{supplement_sec:adaptive_alg}). Results of numerical simulations are presented in \cref{sec:il_ex,sec:numerical_applications,sec:random}.

\section{Illustrative examples} \label{sec:il_ex}

In this section, to illustrate our algorithms, we consider two examples in detail. The first compares the performance with the polytopic algorithm \cite{GP13}, and the second shows how to deal with a critical case.

\subsection{Comparison of the algorithms} \label{subsec:il_ex}
Let us consider the family of matrices
\[
\cF := \{A_1,A_2\} = \left\{ \begin{pmatrix}
	7 & 0 \\ 2 & 3
\end{pmatrix}, \begin{pmatrix}
	2 & 4 \\ 0 & 8
\end{pmatrix} \right\}.
\]
In \cite{BrunduZennaro2018} it is shown that the product $\Pi := A_1A_2(A_1^2A_2)^2$ is a s.l.p. and, therefore, that the lower spectral radius of $\cF$ is given by:
\[
\check{\rho}(\cF) = \rho(\Pi)^{1/8} = \left[ 4 \cdot \left( 213803 + \sqrt(44666192953) \right) \right]^{1/8} \approx 6.009313489.
\]
Applying the polytopic algorithm to $\widetilde \cF = \rho(\Pi)^{-1/8} \cdot \cF$, yields the extremal polytope antinorm illustrated in \cref{fig:5.2}. However, the rescaled family $\widetilde \cF$ violates the condition \eqref{eq.hpgelfand}, making our algorithms inapplicable; to address this issue, we simply work with the transpose family, which satisfies the assumption. 

We are now ready to examine the performances on this example of Algorithm \textbf{(S)} and its adaptive variants: Algorithm \textbf{(A)} and Algorithm \textbf{(E)}.

\subsection*{Algorithm (S)} 

\cref{tab:5.1} collects the results obtained with \cref{alg.1} when applied with the $1$-antinorm and $\delta=10^{-6}$ for various values of $M$. It contains the LSR bounds, along with some elements of the performance metric $\texttt{p}$ (see \cref{remark:alg_1}): optimal degree for convergence ($\ell_{opt}$), potential s.l.p. degree ($\ell_{slp}$), highest degree explored ($\texttt{n}$), and maximum number of products ($\texttt{J}_{max}$) for any given length.

\begin{table}[ht!]
	\centering
	\setlength\belowcaptionskip{-5pt}
	\begin{tabular}{
			@{}
			S[table-format=1.0e1]
			|
			S[table-format=1.6]
			S[table-format=1.6]
			|
			S[table-format=1.0]
			S[table-format=2.0]
			S[table-format=2.0]
			S[table-format=6.0]
			@{}
		}
		\toprule
		{$M$} & {\textbf{low.bound}} & {\textbf{up.bound}} & {$\ell_{slp}$} & {$\ell_{opt}$} & {\texttt{n}} & {$\texttt{J}_{max}$} \\
		\midrule
		5e1 & 0.985087 & 1.000025 & 5 &  7 &  7 &      9 \\
		1e3 & 0.995985 & 1.000000 & 8 & 17 & 18 &    139 \\
		5e3 & 0.997440 & 1.000000 & 8 & 25 & 25 &    647 \\
		1e4 & 0.997680 & 1.000000 & 8 & 27 & 28 &   1219 \\
		1e6 & 0.998927 & 1.000000 & 8 & 51 & 51 & 116265 \\
		\bottomrule
	\end{tabular}
	\caption{\cref{alg.1} applied with the $1$-antinorm and $\delta=10^{-6}$.}
	\label{tab:5.1}
\end{table}

The differences between the extremal polytope (see \cref{fig:5.2}) and the one generated by the $1$-antinorm suggest a slow convergence towards one. This is confirmed by the data: the lower bound improves from $M=50$ to $M=10^6$ (the latter required several hours) rather slowly, increasing by only approximately $1.4 \%$.

\subsection*{Algorithm (A)}

In this case, even taking $M=50$ leads to convergence:
\[
\texttt{lower bound} = \texttt{upper bound} = 1.
\]
Thus, the algorithm converges to an accuracy $\delta$ that is smaller than the $\eps$-precision of the machine. The performance metric $\texttt{p} = (8,8,8,54,5)$ shows that both $\ell_{opt}$ and $\ell_{slp}$ coincide with the correct s.l.p. degree ($8$). 

\subsection*{Algorithm (E)} 

The performance is similar to Algorithm {\bfseries (A)}, so we focus only on the behavior with respect to the scaling parameter $\vartheta$. Based on the data collected in \cref{tab:5.1.1}, we make the following observations:

\begin{table}[ht!]
	\centering
	\setlength\belowcaptionskip{-12pt}
	\begin{tabular}{
			@{}
			S[table-format=1.3]
			|
			S[table-format=1.0]
			S[table-format=1.0]
			c
			S[table-format=2.0]
			@{}
		}
		\toprule
		{$\vartheta$} & {\textbf{low.bound}} & {\textbf{up.bound}} & {$(\ell_{slp},\ell_{opt},\texttt{n},\texttt{n}_{op},\texttt{J}_{max})$} & {$\# V$} \\
		\midrule
		1.005 & 1 & 1 & (8,10,10,50,5)   &  7 \\
		1.055 & 1 & 1 & (5,7,7,30,93)  & 6 \\
		1.105 & 1 & 1 & (5,11,11,44,3)  & 8 \\
		1.155 & 1 & 1 & (8,10, 10,64,5) & 7 \\
		1.205 & 1 & 1 & (8,18,18,172,9) & 20 \\
		1.605 & 1 & 1 & (8,20,20,362,24) & 53 \\
		\bottomrule
	\end{tabular}
	\caption{Algorithm {\bfseries (E)} applied with different values of the scaling parameter $\vartheta$.}
	\label{tab:5.1.1}
\end{table}

\begin{itemize}
	\item A small scaling parameter, such as $\vartheta=1.005$, is beneficial. Convergence is achieved with a small number of antinorm evaluations ($\texttt{n}_{op}=50$) and the s.l.p. degree is correctly identified $\ell_{slp}=8$. It returns a polytope with $7$ vertices, close to the extremal one - see \cref{fig:5.2} (\textbf{left}).
	
	\item Larger values of $\vartheta$ either fail to identify the s.l.p. degree or require more antinorm evaluations, and thus computational cost, for convergence. Furthermore, as shown in \cref{fig:5.2} (\textbf{right}), the vertex count increases significantly and the polytope starts to resemble the whole cone $\R_+^d$, making it inefficient.
\end{itemize}

\begin{figure}[ht!]
	\centering
	\setlength\belowcaptionskip{-8pt}
	\begin{subfigure}{0.49\textwidth}
		\centering
		\includegraphics[scale=0.6]{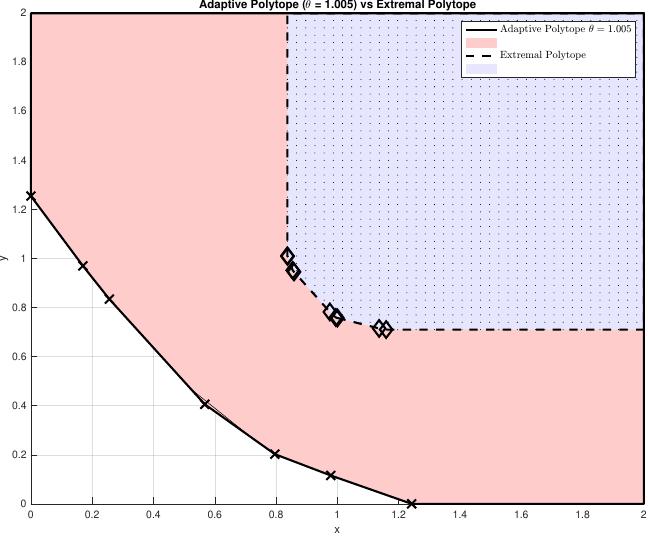}
	\end{subfigure}
	\hfill 
	\begin{subfigure}{0.49\textwidth}
		\centering
		\includegraphics[scale=0.65]{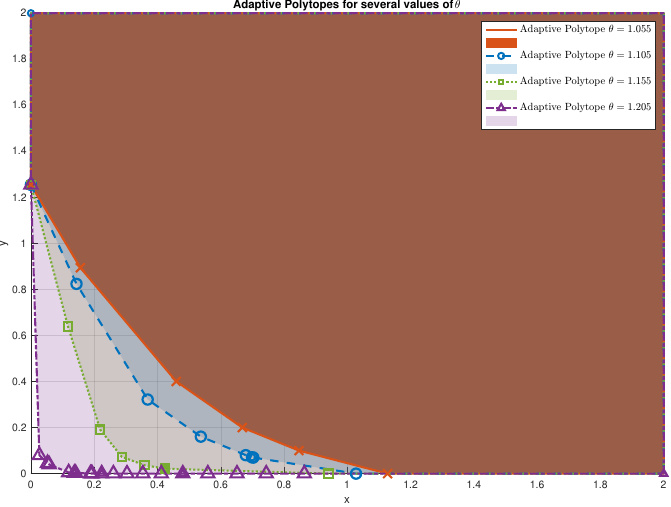}
	\end{subfigure}
	\caption{Comparison between extremal and adaptive ($\vartheta=1.005$) polytopes (\textbf{left}). Comparison of adaptive polytopes generated for different values of $\vartheta$ (\textbf{right}). }
	\label{fig:5.2}
\end{figure}

\subsection{Critical example} \label{subsec:critical_example}

A critical case for our algorithms is a family $\cF$ violating the assumptions of \cref{thm.main}. Specifically, there are secondary eigenvectors lying on the boundary of $\R_+^d$ and some $A_i \in \cF$ have non-simple leading eigenvalues. For instance, consider the family $\cF=\{A_1,A_2\}$, where
\[\scalebox{0.85}{$
	A_1 = \begin{pmatrix}
		5 & 1 & 0 & 0 \\
		0 & 5 & 2 & 0 \\
		0 & 0 & 3 & 1 \\
		0 & 0 & 0 & 2 \\
	\end{pmatrix} \qquad \text{and} \qquad A_2 = \begin{pmatrix}
		1 & 2 & 3 & 4 \\
		0 & 2 & 5 & 6 \\
		0 & 0 & 3 & 7 \\
		0 & 0 & 0 & 4 \\
	\end{pmatrix}.$}
\]
Since $A_1$ has a non-simple leading eigenvalue ($\lambda = 5$) and $\cF$ does not satisfy \eqref{eq.hpgelfand}, \cref{thm.gelfand} does not apply. Thus, the identity \eqref{eq:gelfand_limit} fails for an arbitrary antinorm. To illustrate this issue, let $a_1(\cdot)$ be the $1$-antinorm. By \eqref{eq.antinorm_1}, there holds
\[
a_1(A_1) = 3 \qquad \text{and} \qquad a_1(A_2) = 1,
\]
which means that the lower bound is $1$ after the first step. It is easy to verify that $a_1(A_2^k) = 1$ for all $k \in \N$; therefore, the lower bound remains stuck at one, showing that the $1$-antinorm cannot satisfy the Gelfand's limit \eqref{eq:gelfand_limit}.

\begin{remark} \label{rmk:critical_example_problem}
	Even the adaptive algorithms \textbf{(A)} and \textbf{(E)} fail to converge when starting with the $1$-antinorm. This is because vectors with small $2$-norms (order of $10^{-12}$) are incorporated into the vertex set, leading to $S_6^\ast = \emptyset$. Consequently, no matrix products are selected for evaluation in the subsequent step, preventing the algorithm from progressing further and thus failing to converge to $\delta$.    
\end{remark}

Since $\check \rho(\cF)=3$ and $\Pi = A_1^3 A_2^4$ is a s.l.p., we consider the rescaled family $\widetilde \cF := \rho(\cF)^{-1} \cdot \cF$ and the vertex set $V = \{ v_1 \}$, where $v_1$ is the leading eigenvector of $\widetilde \Pi := \rho(\Pi)^{-1} \cdot \Pi$. In this case, it turns out that:

\begin{itemize}
	\item Algorithm \textbf{(S)} is no longer stuck, but converges too slowly: it returns a lower bound of $0.889231$ with $M=10^6$, making it impractical.
	
	\item Algorithm \textbf{(A)} converges to the target accuracy $\delta=10^{-6}$ within $\texttt{n}_{op}=512$ antinorm evaluations. However, the vertex set grows to $1020$ vertices despite the pruning procedure, making the {\ttfamily linprog} function (solving LP problems) increasingly slower.
	
	\item Algorithm \textbf{(E)} fails to return a lower bound because eigenvectors of the form $(\star, 0, 0, 0)^T$ are included into the vertex set, causing the algorithm to stop prematurely for the same reason described in \cref{rmk:critical_example_problem}.  
\end{itemize}

To avoid relying on a priori knowledge of an s.l.p., we can use a regularization technique (see \cref{subsec:perturbations}) by introducing small perturbations of the form:
\begin{equation}\label{eq:pert_cr}
\widetilde \cF_\eps^\Delta = \widetilde \cF + \eps \{ \Delta_1,\Delta_2 \},
\end{equation}
where $\eps > 0$ and $\Delta_1,\Delta_2$ are non-negative matrices with $\| \Delta_i \|_F = 1$. By \cref{thm.convergence_perturb}, the lower spectral radius is right-continuous; therefore, we have:
\[
\lim_{\eps \to 0^+} \check \rho(\cF_\eps^\Delta) = \check \rho(\cF).
\]
The main advantage of this approach is that we can choose the perturbations to ensure that $\widetilde A_1+\eps \Delta_1$ and $\widetilde A_2 + \eps \Delta_2$ are strictly positive for all $\eps > 0$. By the Perron-Frobenius theorem, this choice guarantees the convergence of our algorithms.

\begin{table}[ht!]
	\centering
	\setlength\belowcaptionskip{-5pt}
	\begin{tabular}{
			@{}
			S[table-format=1.1e-1]
			|
			S[table-format=1.6]
			S[table-format=1.6]
			|
			S[table-format=1.6]
			S[table-format=1.6]
			@{}
		}
		\toprule
		{$\eps$} & \multicolumn{2}{c}{Algorithm \textbf{(A)}} & \multicolumn{2}{c}{Algorithm \textbf{(E)}} \\
		\cmidrule(lr){2-3} \cmidrule(lr){4-5}
		& {\textbf{low.bound}} & {\textbf{up.bound}} & {\textbf{low.bound}} & {\textbf{up.bound}} \\
		\midrule
		1e-3 & 1.001520 & 1.109109 & 1.028864 & 1.109109 \\
		1e-4 & 0.999286 & 1.046703 & 0.999286 & 1.046703 \\
		1e-5 & 0.997308 & 1.018235 & 0.997517 & 1.018235 \\
		1e-6 & 0.994291 & 1.005660 & 0.994587 & 1.005660 \\
		1e-7 & 0.998546 & 1.001086 & 0.999128 & 1.001086 \\
		\bottomrule
	\end{tabular}
	\caption{Algorithms \textbf{(A)} and \textbf{(E)} initialized with the $1$-antinorm. Here Algorithm \textbf{(E)} uses the scaling parameter $\vartheta = 1.005$ for simplicity.}
	\label{tab:5.2.1}
\end{table}

Let us now focus on numerical simulation results obtained by applying Algorithm \textbf{(E)} to perturbations of $\cF$. In particular, we consider several values of the scaling parameter $\vartheta$ and perturbations of the form \eqref{eq:pert_cr}:

\begin{table}[ht]
	\centering
	\setlength\belowcaptionskip{-8pt}
	\scalebox{0.9}{
	\begin{tabular}{
			@{}
			S[table-format=1.3]
			|
			S[table-format=1.0e-1]
			|
			S[table-format=1.6]
			S[table-format=1.6]
			S[table-format=2.0]
			S[table-format=2.0]
			S[table-format=2.0]
			@{}
		}
		\toprule
		$\vartheta$ & $\eps$ & \textbf{low.bound} & \textbf{up.bound} & $\#V$ & $\ell_{slp}$ & $\ell_{opt}$ \\ \midrule
		1.005       & 1e-3   & 1.062580           & 1.101026          & 15    & 12           & 16           \\
		& 1e-5   & 1.000000           & 1.016912          & 14    & 7            & 12           \\
		& 1e-7   & 0.999271           & 1.000948          & 27    & 7            & 13           \\ \midrule
		1.015       & 1e-3   & 1.0647885          & 1.104739          & 10    & 7            & 16           \\
		& 1e-5   & 1.000000           & 1.017197          & 15    & 7            & 12           \\
		& 1e-7   & 0.999271           & 1.000968          & 51    & 7            & 13           \\ \midrule
		1.025       & 1e-3   & 1.065196           & 1.103720          & 11    & 5            & 16           \\
		& 1e-5   & 1.000000           & 1.017378          & 14    & 7            & 12           \\
		& 1e-7   & 0.999271           & 1.000994          & 74    & 7            & 13           \\ \midrule
		1.035       & 1e-3   & 1.069077           & 1.110830          & 15    & 7            & 16           \\
		& 1e-5   & 1.000000           & 1.019048          & 15    & 7            & 12           \\
		& 1e-7   & 0.999271           & 1.001193          & 86    & 7            & 13           \\ \midrule
		1.045       & 1e-3   & 1.068030           & 1.110830          & 15    & 7            & 15           \\
		& 1e-5   & 1.000000           & 1.019048          & 15    & 7            & 12           \\
		& 1e-7   & 0.998857           & 1.001193          & 98    & 7            & 13           \\ \bottomrule
	\end{tabular}}
	\caption{Algorithm \textbf{(E)} applied for various values of the scaling parameter $\vartheta$.}
	\label{tab:s1}
\end{table}

We conclude with few interesting observations drawn from this data, specifically in comparison with the results presented in \cref{tab:5.2.1}:
\begin{itemize}
	\item The s.l.p. degree of $7$ is correctly identified for all $\vartheta$ values when $\eps \le 10^{-5}$.
	
	\item For $\eps \in \{10^{-3},10^{-5}\}$, the impact of $\vartheta$ is minimal. This is further confirmed by the similarity in the number of vertices for different $\vartheta$ values.
	
	\item The scaling parameter $\vartheta = 1.005$ is optimal in two aspects: 
	\begin{itemize}
		\item It yields the smallest gap between upper and lower bounds for $\eps = 10^{-7}$.
		\item The resulting adaptive antinorm has the smallest number of vertices.
	\end{itemize}
	
	\item For $\eps = 10^{-7}$, larger values of $\vartheta$ become problematic. The vertex count grows substantially, while the gap between upper and lower bounds widens.
\end{itemize}

\section{Numerical applications} \label{sec:numerical_applications}

In this section, we discuss two applications in combinatorial analysis that have already been addressed with the polytopic algorithm in \cite[Section 8]{GP13}. Our goal here is different: we aim to find accurate bounds for the lower spectral radius as efficientlt as possible. Thus, it is not essential to identify extremal antinorms; for instance, as shown in \cref{tab:5.3}, adaptive polytopes can have significantly fewer vertices than extremal ones while yielding accurate approximations.

\subsection{The density of ones in the Pascal rhombus}

The Pascal rhombus is an extension of Pascal's triangle. Unlike the latter, which is generated by summing adjacent numbers in the above row forming a triangular array, the Pascal rhombus creates a two-dimensional rhomboidal shape array (\cite{Goldwasser1999}). The elements of the Pascal rhombus can also be characterized by a linear recurrence relation on polynomials:
\[
\begin{cases}
	p_n(x) = (x^2+x+1) \cdot p_{n-1}(x) + x^2 \cdot p_{n-2}(x), & \text{for } n \ge 2,
	\\ p_0(x) = 1, \quad p_1(x) = x^2+x+1. \end{cases}
\]
The asymptotic growth of $w_n$, which denotes the number of odd coefficients in the polynomial $p_n$, as shown in \cite{finch2008odd}, depends on the JSR $\hat \rho(\cF)$ and the LSR $\check \rho(\cF)$ of the matrix family $\cF := \{A_1,A_2\}$, where:
\[
A_1 = \begin{pmatrix} 0 & 1 & 0 & 0 & 0 \\ 1 & 0 & 2 & 0 & 0 \\ 0 & 0 & 0 & 0 & 0 \\ 0 & 1 & 0 & 0 & 1 \\ 0 & 0 & 0 & 2 & 1\end{pmatrix} \qquad \text{and} \qquad A_2 = \begin{pmatrix} 1 & 0 & 2 & 0 & 0 \\ 0 & 0 & 0 & 2 & 1 \\ 1 & 1 & 0 & 0 & 0 \\ 0 & 0 & 0 & 0 & 0 \\ 0 & 1 & 0 & 0 & 0 \end{pmatrix}.
\]
While the JSR ($\hat \rho(\cF) = 2$) can be easily determined, computing the lower spectral radius is quite challenging. In \cite{GP13}, the polytopic algorithm is employed to determine that $\Pi = A_1^3 A_2^3$ is a s.l.p., resulting in $\check \rho(\cF) = \rho(\Pi)^{1/6} \approx 1.6376$. We now compare this to the outcomes obtained by applying our algorithms:

\begin{itemize}
	\item \textbf{Algorithm (S).} Setting $M=10^4$ and $\delta=10^{-6}$, and applying \cref{alg.1} with the $1$-antinorm to the rescaled family $\widetilde \cF := \check \rho^{-1}(\cF) \cdot \{ A_1, A_2 \}$, yields:
	\[
	0.9987 \le \check \rho(\widetilde \cF) \le 1.
	\]
	The upper bound equals one because $M$ is large enough for the algorithm to evaluate products of degree six, $\widetilde \Pi := \rho(\Pi)^{-1} \cdot \Pi$ included. In contrast with the example in \cref{subsec:il_ex}, increasing $M$ improves the lower bound:
	\[
	M = 10^6 \implies (1-\texttt{lower bound}) \approx 10^{-4}.
	\]
	However, the difference between the extremal polytope (which includes $8$ vertices) and the polytope corresponding to the $1$-antinorm slows down convergence as $M$ increases.
	
	\item \textbf{Algorithm (A).} Applying the adaptive algorithm using the $1$-antinorm as the initial one, yields a significantly more precise and efficient outcome:
	\[
	M = 500 \implies (1 - \texttt{lower bound}) \approx 10^{-6}=\delta.
	\]
	The algorithm identifies the s.l.p. degree $\ell_{slp}=6$ correctly. If we apply the algorithm again using the refined antinorm (which has $29$ vertices) as the initial one, we achieve a much higher precision ($\delta=10^{-11}$) within the same amount of antinorm evaluations $\texttt{n}_{op} \approx 50$. In this case, the algorithm correctly returns all cyclic permutations of $\widetilde \Pi$ as s.l.p. candidates.
	
	\item \textbf{Algorithm (E)}. Outperforms Algorithm \textbf{(A)} in terms of speed and accuracy; e.g., it achieves an accuracy of $\delta= 10^{-12}$ within $150$ antinorm evaluations.
\end{itemize}


\subsection{Euler binary partition functions}\label{subsec:euler}

Let $r \geq 2$ integer. The Euler partition function $b(k) := b(r,k)$ is defined as the number of distinct binary representations:
\[
k = \sum_{j=0}^{+\infty} d_j 2^j, \quad \text{where } d_j \in \{0,\ldots,r-1\} \text{ for all } j \in \N.
\]
The function's asymptotic behavior - see \cite{Protasov2000} for an overview - is characterized in \cite{Reznick1990}, for $r$ odd, via the JSR ($\hat \rho$) and LSR ($\check\rho$) of a matrix family $\cF$ as follows:
\[
\limsup_{k \to + \infty} \frac{\log b(k)}{\log k} = \log_2 \hat \rho(\cF)
\qquad \text{and} \qquad \liminf_{k \to + \infty} \frac{\log b(k)}{\log k} = \log_2 \check \rho(\cF).
\]
More precisely, the family $\cF$ is given by two $(r-1)\times (r-1)$ matrices which, for $s \in \{1,2\}$, are defined by the following relation:
\[
(A_s)_{ij} = \begin{cases} 1 & \text{if } i+2-s \le 2j \le i + r - s + 1, \\ 0 & \text{otherwise}. \end{cases}
\]
In \cite{GP13}, the polytopic algorithm demonstrates that for $r$ odd (up to $41$), one of the matrices $A_1$ or $A_1A_2$ acts as a s.m.p. while the other as a s.l.p. 

\begin{remark}
	Algorithm \textbf{(A)} generally performs better than Algorithm \textbf{(E)}, as shown by the data in \cref{tab:5.3} and \cref{tab:supplement_euler_antinorm1_alge}. This is likely due to Algorithm \textbf{(E)} including several eigenvectors of matrix products of degree $\ge 3$ in the vertex set, which reduces the impact of the leading eigenvector of the s.l.p. ($A_1$ or $A_1A_2$).
\end{remark}

In this context, the $1$-antinorm is not recommended as the cone $\R_+^d$ is not strictly invariant for $\cF$. Instead, we use the polytope antinorm corresponding to $V=\{v_1\}$, where $v_1$ is the leading eigenvector of $A_1$. Moreover, since taking $M$ large has limited benefit when the s.l.p. degree is low, a more effective approach is the following:

\begin{enumerate}[label=(\arabic*)]
	\item Apply Algorithm \textbf{(A)}/\textbf{(E)} with $M$ small (e.g., $M=10$) to establish a preliminary bound $L \le \check \rho(\cF)\le H$. In this case, the upper bound $H$ already coincides with the LSR because the s.l.p. degree is either one or two.
	
	\item To prevent large entries from appearing while exploring long products, use the preliminary lower bound to rescale the family of matrices. In particular, consider the matrix family $\widetilde \cF := L^{-1} \cdot \cF$.
	
	\item Set $\delta>0$, $j=1$ (iteration count), and $M$. Apply Algorithm \textbf{(A)}/\textbf{(E)} to obtain a new lower bound ($L_{new}$), upper bound ($H_{new}$, which does not improve further in this case) and vertex set ($V_{new}$). 
	
	\item Check the stopping criterion:
	\begin{equation}\label{eq:alg_stop}
		H_{new} - L_{new} < \delta \qquad \text{or} \qquad \left| L_{new} - L \right| < \delta.
	\end{equation}
	If \eqref{eq:alg_stop} is satisfied or $j = \texttt{maxIter}$ (which is fixed a priori), the procedure terminates, returning $(L_{new}, H_{new})$. Otherwise, increase $j$ by one, rescale the family with $L_{new}$, set $L=L_{new}$ and $H=H_{new}$, and re-apply Algorithm \textbf{(A)}/\textbf{(E)} starting with the refined antinorm corresponding to $V_{new}$.
\end{enumerate}

In \cref{tab:5.3}, we summarize the results obtained by applying this strategy initializing Algorithm \textbf{(A)} with the parameters $M = 100$ and $\texttt{maxIter} = 20$. It provides a comparison between the vertex count of \textit{adaptive polytopes} ($\# V_{adap}$) and \textit{extremal polytopes} ($\# V_{ext}$), the latter taken from \cite[Section 8]{GP13}. The table also includes a time column (\textbf{T}[s]), rounded to the nearest second, showing the algorithm's efficiency even for higher dimensions.

\begin{table}[ht!]
	\centering
	\setlength\belowcaptionskip{-8pt}
	\scalebox{0.8}{
		\begin{tabular}{
				@{}
				S[table-format=2.0]
				|
				S[table-format=2.6]
				S[table-format=2.6]
				S[table-format=3.0]
				S[table-format=2.0]
				S[table-format=3.0]
				|
				S[table-format=2.0]
				|
				S[table-format=2.6]
				S[table-format=2.6]
				S[table-format=3.0]
				S[table-format=2.0]
				S[table-format=3.0]
				@{}
			}
			\toprule
			{\textbf{r}} & {\textbf{low.bound}} & {\textbf{up.bound}} & {\textbf{T}{[}s{]}} & {$\#V_{adap}$} & {$\# V_{ext}$} &
			{\textbf{r}} & {\textbf{low.bound}} & {\textbf{up.bound}} & {\textbf{T}{[}s{]}} & {$\#V_{adap}$} & {$\# V_{ext}$} \\
			\midrule
			7 & 3.490363 & 3.491891 &  78 & 12 &  14 & 25 & 12.499932 & 12.499971 & 393 & 42 &  58 \\
			9 & 4.493916 & 4.494493 & 200 & 11 &  17 & 27 & 13.499891 & 13.499938 & 354 & 24 &  37 \\
			11 & 5.496584 & 5.497043 & 349 & 19 &  24 & 29 & 14.499962 & 14.499982 & 344 & 24 &  43 \\
			13 & 6.498560 & 6.498947 & 335 & 21 &  28 & 31 & 15.499997 & 15.499999 & 302 & 23 &  34 \\
			15 & 7.499776 & 7.499842 & 261 & 23 &  23 & 33 & 16.499998 & 16.499999 & 289 & 25 &  55 \\
			17 & 8.499875 & 8.499904 & 260 & 28 &  30 & 35 & 17.499980 & 17.499997 & 310 & 22 & 102 \\
			19 & 9.499646 & 9.499789 & 340 & 24 &  46 & 37 & 18.499963 & 18.499994 & 341 & 25 & 113 \\
			21 & 10.499695 & 10.499813 & 372 & 27 &  50 & 39 & 19.499986 & 19.499994 & 292 & 18 &  59 \\
			23 & 11.499848 & 11.499894 & 317 & 33 &  31 & 41 & 20.499983 & 20.499997 & 336 & 23 & 120 \\
			\bottomrule
	\end{tabular}}
	\caption{Data obtained applying the procedure above with Algorithm \textbf{(A)} and initial antinorm determined by the leading eigenvector of $A_1$.}
	\label{tab:5.3}
\end{table}

 In conclusion, we make the following observations:
\begin{itemize}
	\item Reducing the maximum iteration number leads to slightly less precise approximations, but the computational time decreases significantly - see  \cref{tab:supplement_euler_lessprec}.
	
	\item Since Algorithm \textbf{(A)} restarts with a rescaled matrix family after each iteration, the target accuracy $\delta$ is \textit{relative} rather than absolute. 
	
	\item The vertex set corresponding to the extremal antinorm is typically larger than the adaptive one given by Algorithm \textbf{(A)}. This difference becomes more significant in higher dimensions - refer to \cref{tab:supplement_euler_larger}.
\end{itemize}

It is worth noting that Algorithm \textbf{(E)} can be applied to very large values of $r$, returning accurate bounds in a short time-frame. For instance:

\begin{table}[ht!]
	\centering
	\setlength\belowcaptionskip{-8pt}
	\begin{tabular}{
			@{}
			S[table-format=4.0]
			|
			S[table-format=3.10]
			S[table-format=3.10]
			c
			S[table-format=3.0]
			S[table-format=3.0]
			@{}
		}
		\toprule
		{\textbf{r}} & {\textbf{low.bound}} & {\textbf{up.bound}} & {\textbf{s.l.p.}} & {$\# V_{adapt}$} & {$\# V_{extr}$} \\
		\midrule
		251 & 125.4999994835 & 125.5000000000 & $A_1A_2$ & 27 & 24 \\
		501 & 250.4999998897 & 250.5000000000 & $A_1$ & 97 & 11 \\
		751 & 375.4999999117 & 375.5000000000 & $A_1 A_2$ & 125 & 5 \\
		1001 & 500.4998890105 & 500.5000000000 & $A_1$ & 4 & 11 \\
		1501 & 750.4999999250 & 750.5000000000 & $A_1$ & 30 & 7 \\
		\bottomrule
	\end{tabular}
	\caption{Algorithm \textbf{(E)} applied to large values of $r$. Comparison between the adaptive polytopes and the extremal polytopes (given by the polytope algorithm).}
	\label{tab:supplement_euler_larger}
\end{table}

Our algorithm is able to identify the correct s.l.p. ($A_1$ or $A_1 A_2$), while keeping low the number of vertices of the adaptive antinorm. In addition, convergence to $\delta=10^{-6}$ is typically achieved in a few minutes.

Let us now compare Algorithm \textbf{(E)} applied with two different choice of the initial antinorm: the $1$-antinorm and the antinorm derived from the leading eigenvector of $A_1$.

\begin{table}[ht!]
	\centering
	\setlength\belowcaptionskip{-10pt}
	\scalebox{0.9}{
		\begin{tabular}{
				@{}
				S[table-format=2.0]
				|
				S[table-format=2.6]
				S[table-format=2.6]
				S[table-format=2.0]
				|
				S[table-format=2.6]
				S[table-format=2.6]
				S[table-format=3.0]
				@{}
			}
			\toprule
			& \multicolumn{3}{c|}{\textbf{$1$-antinorm}} & \multicolumn{3}{c}{\textbf{antinorm $V=\{v_1\}$}} \\
			\midrule
			{\textbf{r}} & {\textbf{low.bound}} & {\textbf{up.bound}} & {$\# V$}  & {\textbf{low.bound}} & {\textbf{up.bound}} & {$\# V$} \\
			\midrule
			7   & 3.491891   & 3.491891  & 62        & 3.491891 & 3.491891  & 38       \\
			9   & 4.487566   & 4.494493  & 63       & 4.494370   & 4.494493  & 16       \\
			11  & 5.489837   & 5.497043  & 66     & 5.497043  & 5.497043  & 23      \\
			13  & 6.479278   & 6.498947  & 67        & 6.495736   & 6.498947  & 36  \\
			15  & 7.457363   & 7.499842  & 69     & 7.499830   & 7.499842  & 28          \\
			17  & 8.461750   & 8.499904  & 72      & 8.499888 & 8.499904  & 29       \\
			19  & 9.458195   & 9.499789  & 74    & 9.499789 & 9.499789  & 126   \\
			21  & 10.467492  & 10.499813 & 76        & 10.498826  & 10.499813 & 16       \\
			23  & 11.460798  & 11.499893 & 72        & 11.499856  & 11.499894 & 29     \\
			25  & 12.450722  & 12.499971 & 80          & 12.499824  & 12.499971 & 17      \\
			27  & 13.445389  & 13.499938 & 81        & 13.499723  & 13.499938 & 17        \\
			29  & 14.414215  & 14.499982 & 84         & 14.499089  & 14.499982 & 16       \\
			31  & 15.380684  & 15.500000 & 80          & 15.499982  & 15.500000 & 17        \\
			33  & 16.402404  & 16.500000 & 88         & 16.499194  & 16.500000 & 5       \\
			35  & 17.424716  & 17.499997 & 90      & 17.499432 & 17.499997 & 9       \\
			37  & 18.431588  & 18.499994 & 92        & 18.499185  & 18.499994 & 19        \\
			39  & 19.423864  & 19.499994 & 91      & 19.423864  & 19.499994 & 8          \\
			41  & 20.414314  & 20.499997 & 95       &   20.499542 & 20.499997 & 9 \\
			\bottomrule
	\end{tabular}}
	\caption{Algorithm \textbf{(E)} with two different choices of the initial antinorm.}
	\label{tab:supplement_euler_antinorm1_alge}
\end{table}

\begin{enumerate}[label=(\roman*)]
	\item Algorithm \textbf{(E)} starting with the $1$-antinorm performs poorly. The accuracy decreases significantly as $r$ increases, and in some cases, it takes hours for a standard laptop to achieve such poor results.
	
	\item The other choice is generally more accurate and requires less computational time, though it does not always converge to $\delta=10^{-6}$.
\end{enumerate}

Finally, we apply Algorithm \textbf{(A)} using the same initial antinorm as \cref{tab:5.3}, but limiting the computational cost (reducing the number of allowed iterations). The results, presented in \cref{tab:supplement_euler_lessprec} below, while less precise than \cref{tab:5.3} due to less allowed iterations, remains relatively accurate compared to \cref{tab:supplement_euler_antinorm1_alge}:

\begin{table}[ht!]
	\centering
	\setlength\belowcaptionskip{-8pt}
	\scalebox{0.83}{
		\begin{tabular}{@{}S[table-format=2.0]|S[table-format=2.6]S[table-format=2.6]S[table-format=2.0]S[table-format=3.0]|S[table-format=2.0]|S[table-format=2.6]S[table-format=2.6]S[table-format=2.0]S[table-format=3.0]@{}}
			\toprule
			{\textbf{r}} & {\textbf{low.bound}} & {\textbf{up.bound}} & {$\# V_{adp}$} & {$\# V_{ext}$} & {\textbf{r}} & {\textbf{low.bound}} & {\textbf{up.bound}} & {$\# V_{adp}$} & {$\# V_{ext}$} \\
			\midrule
			7 & 3.489753 & 3.491891 & 11 & 14 & 25 & 12.499378 & 12.499971 & 9 & 58 \\
			9 & 4.492477 & 4.494493 & 12 & 17 & 27 & 13.498726 & 13.499938 & 11 & 37 \\
			11 & 5.492146 & 5.497043 & 15 & 24 & 29 & 14.499118 & 14.499982 & 11 & 43 \\
			13 & 6.495201 & 6.498947 & 16 & 28 & 31 & 15.499456 & 15.499999 & 10 & 34 \\
			15 & 7.498230 & 7.499842 & 11 & 23 & 33 & 16.499702 & 16.499999 & 7 & 55 \\
			17 & 8.499265 & 8.499904 & 11 & 30 & 35 & 17.499298 & 17.499996 & 11 & 102 \\
			19 & 9.497710 & 9.499789 & 8 & 46 & 37 & 18.499373 & 18.499994 & 8 & 113 \\
			21 & 10.497841 & 10.499813 & 8 & 50 & 39 & 19.499542 & 19.499994 & 7 & 59 \\
			23 & 11.498875 & 11.499894 & 11 & 31 & 41 & 20.499736 & 20.499997 & 8 & 120 \\
			\bottomrule
	\end{tabular}}
	\caption{Algorithm \textbf{(A)} with $\texttt{maxIter} = 8$ and $M=50$.}
	\label{tab:supplement_euler_lessprec}
\end{table}

\section{Simulations on randomly generated families} \label{sec:random}

In this section, we examine the performance of Algorithm \textbf{(E)} applied to randomly generated families of matrices, including both full matrices and sparse matrices with several sparsity densities $\rho$. The section is organized as follows:
\begin{enumerate}[label=(\alph*)]
	\item In \cref{random:alg_s}, we briefly discuss the unsuitability of the non-adaptive algorithm (\cref{alg.1}) for such classes of matrices.
	\item In \cref{subsec:alg_e_random}, we apply Algorithm \textbf{(E)} to randomly generated families with scaling parameter fixed ($\vartheta=1.005$) and examine the results obtained.
	\item In \cref{subsec:supplemennt_reg}, we apply Algorithm \textbf{(E)} to sparse families with low sparsity densities ($\rho \le 0.25$) using the regularization technique involving perturbations.
	\item In \cref{subsec:opt_theta}, we discuss the choice of the scaling parameter $\vartheta$ for Algorithm \textbf{(E)} by performing a grid-search optimization.
	\item In \cref{subsections:random}, we consider examples of families including more than two matrices and compare the results obtained.
\end{enumerate}

\subsection{Algorithm (S) analysis} \label{random:alg_s}

Algorithm \textbf{(S)} exhibits limitations when applied to sparse matrices with sparsity density $\rho \le 0.5$, mainly due to the many zero entries. While it can be utilized for (almost) full matrices, its performance in high-dimensional spaces is poor compared to adaptive algorithms, as shown in the table below:

\begin{table}[ht]
	\centering
	\setlength\belowcaptionskip{-8pt}
	\begin{tabular}{
			@{}
			S[table-format=3.0]
			|
			S[table-format=2.6]
			S[table-format=3.6]
			|
			S[table-format=2.6]
			S[table-format=2.6]
			@{}
		}
		\toprule
		& \multicolumn{2}{c|}{\textbf{$1$-antinorm}} & \multicolumn{2}{c}{\textbf{antinorm $V=\{v_1\}$}} \\
		\midrule
		{\textbf{d}} & {\textbf{low.bound}} & {\textbf{up.bound}} & {\textbf{low.bound}} & {\textbf{up.bound}} \\
		\midrule
		25   & 12.014676  & 12.204626  & 12.124379 & 12.396828 \\
		50   & 24.213542   & 24.545062  & 24.267634 & 24.573435 \\
		100   & 49.671633 & 50.133269 & 49.632976 & 49.632976 \\
		150   & 74.001873 & 74.530903 & 74.672414 & 75.389293 \\
		200   & 96.186436 & 100.017694 & 98.924848 & 99.742000  \\
		\bottomrule
	\end{tabular}
	\caption{Algorithm \textbf{(S)} with two different choices of the initial antinorm.}
	\label{tab:supplement_algorithms_1}
\end{table}

All simulations were conducted with a maximum of $M=10^5$ antinorm evaluations. This limitation was necessary as high-dimensional cases ($d\ge 50$) required several hours for algorithm termination, especially with the $1$-antinorm.

As expected, using the antinorm given by the leading eigenvector of a matrix family element yielded more accurate results than the $1$-antinorm. However, in no instance did we achieve convergence within a reasonable time-frame, making Algorithm \textbf{(S)} unsuitable for randomly generated families. 

Conversely, as shown in subsequent sections, adaptive algorithms significantly outperforms Algorithm \textbf{(S)} in both speed and accuracy.

\subsection{Algorithm (E) analysis} \label{subsec:alg_e_random}

We now examine the performance of Algorithm \textbf{(E)} applied with $\vartheta=1.005$ to randomly generated families of matrices, including both full matrices and sparse matrices with several sparsity densities $\rho$.

\begin{remark} 
	For randomly generated families, the s.l.p. degree is typically small; thus, we employ the same strategy outlined in \cref{subsec:euler}.
\end{remark}

\begin{table}[ht!]
	\centering
	\setlength\belowcaptionskip{-8pt}
	\begin{tabular}{
			@{}
			S[table-format=3.0]
			|
			S[table-format=3.6]
			S[table-format=3.6]
			c
			S[table-format=2.0]
			S[table-format=3.0]
			@{}
		}
		\toprule
		{\textbf{d}} & {\textbf{low.bound}} & {\textbf{up.bound}} & {$(\ell_{slp},\ell_{opt},\texttt{n},\texttt{n}_{op},\texttt{J}_{max})$} & {$\# V$} & {\textbf{T{[}s{]}}} \\
		\midrule
		25 & 12.555472 & 12.555472 & $(1,3,3,14,8)$   &  4 &  46 \\
		25 & 12.228911 & 12.228911 & $(2,7,7,254,128)$ & 30 & 108 \\
		50 & 24.647721 & 24.647721 & $(1,3,3,14,8)$    & 12 &  32 \\
		50 & 24.792006 & 24.792006 & $(3,5,5,62,32)$   &  4 &  38 \\
		100 & 49.419854 & 49.419909 & $(3,5,7,254,128)$ &  4 &  78 \\
		100 & 49.660798 & 49.660798 & $(1,2,2,6,4)$     & 32 & 118 \\
		150 & 74.813865 & 74.813865 & $(1,6,6,126,64)$  &  4 &  39 \\
		150 & 74.883378 & 74.883379 & $(1,3,3,14,8)$    &  4 &  82 \\
		200 & 100.172004 & 100.172005 & $(3,11,11,154,20)$ &  5 &  32 \\
		200 & 99.915573 & 99.915574 & $(1,5,5,62,32)$   &  4 &  51 \\
		\bottomrule
	\end{tabular}
	\caption{Algorithm {\bfseries (E)} applied to randomly generated families of strictly positive matrices. The initial antinorm is given by the leading eigenvector of $A_1$.}
	\label{tab:5.4}
\end{table}

As shown in \cref{tab:5.4}, Algorithm {\bfseries (E)} establishes bounds for the LSR of $d \times d$ random families with strictly nonzero entries, achieving high accuracy. 

The efficiency of the algorithm is demonstrated by low values of $\texttt{J}_{max}$, which represents the maximum number of factors evaluated at any degree. When the set of examined products $S_k^\ast$ remains small, the algorithm can analyze longer matrix products within a small number of antinorm evaluations (e.g., $M=200$). Furthermore, the adaptive procedure typically adds only a few vertices to the initial antinorm, keeping the computational cost reasonable while improving the convergence rate.

Interestingly, the algorithm correctly identified a s.l.p. in most cases (except for the first $200\times 200$ family), which we verified using the polytopic algorithm.

\begin{table}[!ht]
	\setlength\belowcaptionskip{-5pt}
	\centering
	\begin{tabular}{
			@{}
			S[table-format=2.2]
			|
			S[table-format=3.0]
			|
			S[table-format=3.6]
			S[table-format=3.6]
			c
			S[table-format=2.0]
			@{}
		}
		\toprule
		{\textbf{$\rho$}} & \textbf{d} & {\textbf{low.bound}} & {\textbf{up.bound}} & {$(\ell_{slp},\ell_{opt},\texttt{n},\texttt{n}_{op},\texttt{J}_{max})$} & {$\#V$} \\
		\midrule
		0.25 & 25 & 2.531959 & 2.531959 & $(3,7,7,254,128)$ & 8 \\
		& 50 & 5.425970 & 5.425970 & $(2,8,8,510,256)$ & 29 \\
		& 100 & 11.22993 & 11.22993 & $(2,6,6,126,64)$ & 19 \\
		& 150 & 16.338661 & 16.338661 & $(2,8,8,510,256)$ & 8 \\
		& 200 & 22.194604 & 22.194604 & $(2,4,4,30,16)$ & 8 \\
		\midrule
		0.50 & 25 & 4.700123 & 4.700123 & $(4,6,6,126,64)$ & 13 \\
		& 50 &9.771280 & 9.771280 & $(3,4,4,30,16)$ & 4 \\
		& 100 &19.748468 & 19.748468 & $(1,4,4,30,16)$ & 7 \\
		& 150 &29.331904 & 29.331904 & $(2,7,7,254,128)$ & 7 \\
		& 200 &39.459047 & 39.459047 & $(2,6,6,126,64)$ & 12 \\
		\midrule
		0.75& 25 & 6.480977 & 6.480977 & $(2,6,6,126,64)$ & 4 \\
		& 50 &12.811969 & 12.811969 & $(1,5,5,62,32)$ & 4 \\
		& 100 &26.476497 & 26.476497 & $(2,5,5,62,32)$ & 4 \\
		& 150 &39.447342 & 39.447342 & $(1,6,6,126,64)$ & 4 \\
		& 200 &52.924237 & 52.924237 & $(1,4,4,30,16)$ & 4 \\
		\bottomrule
	\end{tabular}
	\caption{Algorithm {\bfseries (E)} applied to randomly generated families of matrices with sparsity density $\rho$. The initial antinorm is given by the leading eigenvector of $A_1$.}
	\label{tab:5.5}
\end{table}

\cref{tab:5.5} gathers the results obtained when applying the same procedure on sparse matrices with sparsity density $\rho$. Below, we remark some interesting observations:
\begin{itemize}
	\item For both full and sparse families, we use the initial antinorm given by the leading eigenvector of $A_1$. This choice is especially important for sparse matrices, as the $1$-antinorm is not suitable with numerous zero entries.
	
	\item When $\rho = 0.25$, the algorithm typically incorporates more vertices compared to higher density or full matrices. Furthermore, a relative accuracy of $\delta = 10^{-6}$ generally requires a higher number of operations ($\texttt{n}_{op}$). This is likely due to $S_k^\ast$, which represents the set of matrix products of degree $k$ that are evaluated, having cardinality close to $\Sigma_k$. Nonetheless, the algorithm converges within a reasonable time frame and successfully identifies s.l.p. in all tested dimensions $d$.
	
	\item When $\rho \in \{0.5,0.75\}$, the algorithm exhibits significantly faster convergence and incorporates fewer vertices. 
\end{itemize}

\subsection{Regularization}\label{subsec:supplemennt_reg}

In this section, we apply Algorithm \textbf{(E)} to families regularized through perturbations, utilizing the methodology outlined in \cref{subsec:euler}. We focus our analysis on sparse families with low sparsity densities ($\rho \le 0.25$) and compare the results to \cref{subsec:alg_e_random}, where no regularization technique was used. 

\begin{table}[ht!]
	\centering
	\setlength\belowcaptionskip{-8pt}
	\begin{tabular}{
			@{}
			S[table-format=3.0]
			|
			S[table-format=1.0e-1]
			|
			S[table-format=1.6]
			S[table-format=1.6]
			S[table-format=3.0]
			S[table-format=2.0]
			S[table-format=2.0]
			@{}
		}
		\toprule
		{\textbf{d}} & {$\eps$} & {\textbf{low.bound}} & {\textbf{up.bound}}  & {$\#V$} & {$\ell_{slp}$} & {$\ell_{opt}$} \\
		\midrule
		25 & 1e-3 & 0.773553 & 0.773554 & 38 & 4 & 14 \\
		& 1e-5 & 0.772502 & 0.772503 & 46 & 1 & 16 \\
		& 1e-7 & 0.772492 & 0.772493 & 43 & 1 & 42 \\
		\midrule
		50 & 1e-3 & 2.223289 & 2.223293  & 52 & 1 & 11 \\
		& 1e-5 & 2.222668 & 2.222702  & 59 & 1 & 9 \\
		& 1e-7 & 2.222460 & 2.222696 & 52 & 1 & 9 \\
		\midrule
		100 & 1e-3 & 4.608453 & 4.608458 & 55 & 3 & 10 \\
		& 1e-5 & 4.606516 & 4.607851  & 47 & 3 & 9 \\
		& 1e-7 & 4.607840 & 4.607845 & 57 & 3 & 10 \\
		\midrule
		200 & 1e-3 & 9.476520 & 9.476530 & 8 & 2 & 21 \\
		& 1e-5 & 9.475914 & 9.475924 & 8 & 3 & 9 \\
		& 1e-7 & 9.475908 & 9.475918 & 10 & 1 & 13 \\
		\bottomrule
	\end{tabular}
	\caption{Sparse families with sparsity density $\rho = 0.10$. Algorithm \textbf{(E)} is applied starting from the leading eigenvector of $A_1$.}
	\label{tab:reg_supp_2}
\end{table}

\begin{remark}Let us make a few observations about the data gathered above:
	\begin{itemize}
		\item Optimizing the scaling parameter $\vartheta$ is not a concern here. Thus, we use the fixed value $\vartheta = 1.010$ for all simulations for consistency.
		
		\item Due to the low sparsity density, the $1$-antinorm is not suitable to initialize the algorithm as it may get stuck. Instead, we use the polytope antinorm derived from the leading eigenvector of $A_1$.
	\end{itemize}
\end{remark}

We now make a few observations about the data gathered in \cref{tab:reg_supp_2}. In this case, we know that the LSR is right-continuous, so we expect that
\[
\check\rho(\cF) \approx \check\rho(\cF_{\eps})
\]
with $\eps=10^{-7}$ is the correct approximation with a target accuracy of $\delta=10^{-6}$. In other words, a natural question arising here is the following: is it true that
\begin{equation}\label{eq:formula_ee}
	\check \rho(\cF_{\eps}) = \check \rho (\cF) + C \eps + \mathcal O(\eps^2)
\end{equation}
for some positive constant $C$? Another interesting question is the following: if the matrix product $\Pi_\eps = \prod_{j=1}^k (A_{i_j}+\eps \Delta_{i_j})$ is a s.l.p. for $\cF_\eps$, is it true that
\[
\Pi := \Pi_\eps \, \big|_{\eps = 0} = \prod_{j=1}^k \left(A_{i_j} + \eps \Delta_{i_j}\right) \, \Big|_{\eps=0} = \prod_{j=1}^k A_{i_j}
\]
is a s.l.p. for the unperturbed family $\cF$? For example, when $d=25$ and $\eps=10^{-7}$, the algorithm finds the s.l.p. 
\[
\Pi_{10^{-7}} = A_1 + 10^{-7} \Delta_1.
\]
Next, we apply the polytopic algorithm to $\Pi=A_1$ and $\cF$ and find that $\Pi$ is indeed a s.l.p. for the unperturbed family $\cF$ and the extremal antinorm consists of $71$ vertices. As a result, the LSR of $\cF$ can be computed exactly as $\check \rho(\cF) = \rho(A_1)$. Thus,
\[
\check \rho(\cF_{\eps}) - \check \rho(\cF) = \rho(A_1+\eps \Delta_1)-\rho(A_1) \approx \eps
\]
holds when $\eps=10^{-7}$, which means that \eqref{eq:formula_ee} is verified in this case, making this approximation correct for a target accuracy of $\delta=10^{-6}$.

In conclusion, the regularization technique performs well for any value of the dimension $d$ tested. The target accuracy of $\delta = 10^{-6}$ is achieved within minutes, while the number of vertices remains relatively small.

\begin{table}[ht!]
	\centering
	\setlength\belowcaptionskip{-8pt}
	\begin{tabular}{
			@{}
			S[table-format=3.0]
			|
			S[table-format=1.0e-1]
			|
			S[table-format=2.6]
			S[table-format=2.6]
			S[table-format=3.0]
			S[table-format=3.0]
			S[table-format=2.0]
			S[table-format=2.0]
			@{}
		}
		\toprule
		{\textbf{d}} & {$\eps$} & {\textbf{low.bound}} & {\textbf{up.bound}} & {\textbf{T[s]}} & {$\#V$} & {$\ell_{slp}$} & {$\ell_{opt}$} \\
		\midrule
		25 & 1e-3 & 2.293235 & 2.293237 & 120 & 22 & 6 & 10 \\
		& 1e-5 & 2.292609 & 2.292612 & 93 & 24 & 4 & 7 \\
		& 1e-7 & 2.292603 & 2.292605 & 80 & 26 & 2 & 8 \\
		\midrule
		50 & 1e-3 & 4.484785 & 4.484790  & 56 & 15 & 1 & 8 \\
		& 1e-5 & 4.484173 & 4.484177 & 70 & 18 & 1 & 7 \\
		& 1e-7 & 4.484167 & 4.484171 & 74 & 20 & 2 & 7 \\
		\midrule
		100 & 1e-3 & 4.608453 & 4.608458 & 40 & 55 & 3 & 10 \\
		& 1e-5 & 4.606516 & 4.607851 & 45 & 47 & 3 & 9 \\
		& 1e-7 & 4.607840 & 4.607845 & 32 & 57 & 3 & 10 \\
		\midrule
		200 & 1e-3 & 17.940780 & 17.940797  & 120 & 9 & 3 & 8 \\
		& 1e-5 & 17.940174 & 17.940191 & 131 & 10 & 2 & 7 \\
		& 1e-7 & 17.940167 & 17.940185 & 176 & 10 & 2 & 10 \\
		\bottomrule
	\end{tabular}
	\caption{Regularization of sparse families with sparsity density $\rho = 0.20$.}
	\label{tab:reg_supp_1}
\end{table}

The results for a sparsity density of $\rho = 0.2$ show a marked improvement in both accuracy and computational efficiency compared to those presented in \cref{tab:reg_supp_2}.

Additionally, if we examine closely the case of $d=50$, we see that the algorithm returns the following s.l.p. candidates depending on the value of $\eps$:
\[
\Pi_{\eps} = \begin{cases} A_2 + \eps \Delta_2 & \text{for } \eps \in \{10^{-3},10^{-5}\},\\ (A_2+\eps \Delta_2)^2 & \text{for } \eps = 10^{-7}. \end{cases}
\]
Theoretically, given that the spectral radius property $\rho(A^2)=\rho(A)^2$ holds for any matrix $A$, the algorithm should have returned $A+\eps \Delta_2$ for all $\eps$ tested. The observed discrepancy for $\eps=10^{-7}$ is likely due to numerical inaccuracies in computing the square root, which updates the upper bound even though it does not change.

That said, we applied the polytopic algorithm to $A_2$ directly. This confirmed that $A_2$ is indeed a s.l.p. for the unperturbed family. The extremal antinorm for this case has only $9$ vertices, which is fewer than all the adaptive ones.

\begin{table}[ht!]
	\centering
	\setlength\belowcaptionskip{-8pt}
	\begin{tabular}{
			@{}
			S[table-format=3.0]
			|
			S[table-format=1.0e-1]
			|
			S[table-format=2.6]
			S[table-format=2.6]
			S[table-format=4.0]
			S[table-format=3.0]
			S[table-format=1.0]
			S[table-format=2.0]
			@{}
		}
		\toprule
		{\textbf{d}} & {$\eps$} & {\textbf{low.bound}} & {\textbf{up.bound}}  & {\textbf{T}[s]} & {$\#V$} & {$\ell_{slp}$} & {$\ell_{opt}$} \\
		\midrule
		50 & 1e-3 & 5.585884 & 5.585890 & 50 & 12 & 1 & 9 \\
		& 1e-5 & 5.585274 & 5.585280 & 55 & 11 & 1 & 13 \\
		& 1e-7 & 5.585268 & 5.585274 & 64 & 11 & 1 & 5 \\
		\midrule
		100 & 1e-3 & 10.853029 & 10.853039 & 165 & 14 & 5 & 10 \\
		& 1e-5 & 10.852423 & 10.852434 & 268 & 14 & 1 & 5 \\
		& 1e-7 & 10.852417 & 10.852428 & 462 & 14 & 3 & 7 \\
		\midrule
		150 & 1e-3 & 16.593939 & 16.593956 & 239 & 12 & 3 & 9 \\
		& 1e-5 & 16.593333 & 16.593349 & 136 & 13 & 1 & 11 \\
		& 1e-7 & 16.593326 & 16.593343 & 353 & 13 & 1 & 5 \\
		\bottomrule
	\end{tabular}
	\caption{Regularization of sparse families with sparsity density $\rho = 0.25$.}
	\label{tab:reg_supp_3}
\end{table}

Comparing the data in \cref{tab:reg_supp_3} with \cref{tab:5.5}, we deduce that the regularization technique does not yield better performances for a sparsity density of $\rho = 0.25$. This suggests that at this density level, the matrices structure is robust enough to make perturbations essentially useless.

Therefore, for sparsity densities $\rho \ge 0.25$, Algorithm \textbf{(E)} can generally be applied directly without the need for regularization, thus improving computational efficiency.

\subsection{Optimization of the scaling parameter} \label{subsec:opt_theta}

In this section, we discuss the choice of the scaling parameter $\vartheta$ for Algorithm \textbf{(E)}. In particular, we focus on two classes of matrices in relatively high dimensions: \textit{full} and \textit{sparse} matrices.

	\begin{itemize}
		\item For consistency, we initialize the algorithm with the antinorm derived from the leading eigenvalue of $A_1$ for both classes.
		
		\item Since from a theoretical point of view there is no information on optimal values of $\vartheta$, we employ a grid-search strategy in \cref{table:supplement_vartheta_opt} and \cref{table:supplement_vartheta_opt_2}.
	\end{itemize}

\begin{table}[ht!]
	\centering
	\setlength\belowcaptionskip{-8pt}
	\scalebox{0.95}{
		\begin{tabular}{
				@{}
				S[table-format=3.0]
				|
				S[table-format=1.4]
				|
				S[table-format=2.7]
				S[table-format=2.7]
				S[table-format=1.0]
				S[table-format=2.0]
				S[table-format=2.0]
				S[table-format=1.0]
				S[table-format=3.0]
				@{}
			}
			\toprule 
			{\textbf{d}} & {$\vartheta$} & {\textbf{low.bound}} & {\textbf{up.bound}} & {$\ell_{slp}$} & {$\ell_{opt}$} & {$\texttt{n}_{op}$} & {$\texttt{J}_{max}$} & {$\# V$}  \\ \midrule
			50         & 1.0005      & 24.4721163         & 24.4721163        & 1 & 8 & 26 & 3 & 2      \\
			& 1.0010      & 24.4721163         & 24.4721163        & 1 & 8 & 26 & 3 & 2      \\
			& 1.0050      & 24.4721163         & 24.4721163        & 1 & 8 & 26 & 3 & 2      \\
			& 1.0100      & 24.4721163         & 24.4721163        & 1 & 12 & 58 & 4 & 2      \\
			& 1.0200      & 24.4721163         & 24.4721163        & 1 & 12 & 58 & 4 & 2      \\
			& 1.0500      & 24.4721163         & 24.4721163        & 1 & 8 & 26 & 3 & 2      \\
			& 1.1000      & 24.4721163         & 24.4721163        & 1 & 8 & 26 & 3 & 2      \\
			& 1.5000      & 24.4721163         & 24.4721163        & 1 & 8 & 26 & 3 & 3      \\
			& 1.8000      & 24.4721163         & 24.4721163        & 1 & 5 & 20 & 3 & 2      \\ \midrule
			150        & 1.0005      & 75.1124947         & 75.1124947        & 1 & 7 & 16 & 2 & 4      \\
			& 1.0010      & 75.1124947         & 75.1124947        & 1 & 7 & 16 & 2 & 3      \\
			& 1.0050      & 75.1124947         & 75.1124947        & 1 & 7 & 16 & 2 & 3      \\
			& 1.0100      & 75.1124947         & 75.1124947        & 1 & 7 & 20 & 2 & 3      \\
			& 1.0200      & 75.1124947         & 75.1124947        & 1 & 7 & 20 & 2 & 4      \\
			& 1.0500      & 75.1124947         & 75.1124947        & 1 & 7 & 20 & 2 & 4      \\
			& 1.1000      & 75.1124947         & 75.1124947        & 1 & 6 & 18 & 2 & 2      \\
			& 1.5000      & 75.1124947         & 75.1124947        & 1 & 7 & 20 & 2 & 3      \\
			& 1.8000      & 75.1124947         & 75.1124947        & 1 & 7 & 20 & 2 & 3      \\ \bottomrule
	\end{tabular}}
	\caption{Algorithm \textbf{(E)} applied for different values of $\vartheta$ to full matrix families.}
	\label{table:supplement_vartheta_opt}
\end{table}

The behavior for full matrices yields unexpected results, as the value of $\vartheta$ appears to have minimal impact on perfomance. More precisely, referring to \cref{table:supplement_vartheta_opt}, we observe that:
\begin{enumerate}[label=(\alph*)]
	\item In both dimensions tested, the algorithm converges quickly to the target accuracy $\delta=10^{-6}$, regardless of the $\vartheta$ value used.
	
	\item \textbf{($d=50$)} The algorithm correctly identifies the s.l.p. degree as $1$. The matrix $A_2$ is confirmed to be a s.l.p. by the polytopic algorithm, with its extremal antinorm having $4$ vertices. Therefore, the LSR of $\cF$ is:
	\[
	\check \rho(\cF)=\rho(A_2) \approx 24.4721163.
	\]
	The algorithm is extremely efficient, exploring products up to degree $12$ within fewer than $100$ antinorm evaluations ($\texttt{n}_{op}$). Indeed, the low values of $\texttt{J}_{max}$ indicate that each $S_k^\ast$ contains only a few elements - refer to \cref{thm.main} and \cref{prop:improvement} for the precise notations used here.
	
	\item \textbf{($d=100$)} As above, convergence to $\delta=10^{-6}$ is achieved quickly, and the correct s.l.p. degree is identified. This results in the following equality:
	\[
	\check \rho(\cF)=\rho(A_2) \approx 75.1124947.
	\]
	Efficiency is even more significant in this case, with at most $\texttt{n}_{op}=20$ antinorm evaluations required for convergence.
\end{enumerate}

\begin{remark} 
	The convergence appears to be independent of the $\vartheta$ value tested. This unexpected behavior may be attributed to the adaptive procedure incorporating only a few new vertices, reducing the overall impact of $\vartheta$.
\end{remark}

\begin{table}[ht!]
	\centering
	\setlength\belowcaptionskip{-5pt}
	\begin{tabular}{
			@{}
			S[table-format=3.0]
			|
			S[table-format=1.4]
			|
			S[table-format=2.7]
			S[table-format=2.7]
			S[table-format=1.0]
			S[table-format=2.0]
			S[table-format=2.0]
			S[table-format=1.0]
			S[table-format=3.0]
			@{}
		}
		\toprule
		{\textbf{d}} & {$\vartheta$} & {\textbf{low.bound}} & {\textbf{up.bound}} & {$\ell_{slp}$} & {$\ell_{opt}$} & {$\texttt{n}_{op}$} & {$\texttt{J}_{max}$} & {$\# V$} \\ \midrule
		100 & 1.0005 & 12.8611170 & 12.8611170 & 2 & 11 & 66 & 4 & 7 \\
		& 1.0050 & 12.8611170 & 12.8611170 & 2 & 11 & 66 & 4 & 7 \\
		& 1.0150 & 12.8611170 & 12.8611170 & 2 & 11 & 66 & 4 & 6 \\
		& 1.0500 & 12.8611170 & 12.8611170 & 2 & 7  & 32 & 3 & 6 \\
		& 1.5000 & 12.8611170 & 12.8611170 & 2 & 7  & 32 & 3 & 4 \\ \midrule
		150 & 1.0005 & 19.5415187 & 19.5415187 & 1 & 4  & 12 & 2 & 11 \\
		& 1.0050 & 19.5415187 & 19.5415187 & 1 & 4  & 12 & 2 & 11 \\
		& 1.0150 & 19.5415187 & 19.5415187 & 1 & 4  & 12 & 2 & 11 \\
		& 1.0500 & 19.5415187 & 19.5415187 & 1 & 4  & 12 & 2 & 11 \\
		& 1.5000 & 19.5415187 & 19.5415187 & 1 & 4  & 12 & 2 & 11 \\ \bottomrule
	\end{tabular}
	\caption{Algorithm \textbf{(E)} applied to sparse matrices with sparsity density $\rho = 0.3$.}
	\label{table:supplement_vartheta_opt_2}
\end{table}

Analysis of \cref{table:supplement_vartheta_opt_2} shows that Algorithm \textbf{(E)} performs well when applied to sparse matrices with a sparsity density of $\rho = 0.3$. A few conclusive observations:
\begin{enumerate}[label=(\alph*)]
	\item \textbf{($d=100$)} The algorithm converges quickly to the target accuracy $\delta=10^{-6}$ and returns $\ell_{slp}=2$. The candidate s.l.p. identified is $\Pi = A_1A_2$, which is confirmed to be a s.l.p. for $\cF$ by the polytopic algorithm. Hence:
	\[
	\check \rho(\cF) = \rho(A_1A_2)^{1/2} \approx 12.8611169884195,
	\]
	with an extremal antinorm that has $14$ vertices. As in \cref{table:supplement_vartheta_opt}, the algorithm is rather efficient, exploring relatively long products within few antinorm evaluations.
	
	\item \textbf{($d=150$)} The algorithm converges quickly, returning $\ell_{slp}=1$. The polytopic algorithm confirms that $A_1$ is a s.l.p., and hence
	\[
	\check\rho(\cF)=\rho(A_1) \approx 19.5415186589294.
	\]
	The extremal antinorm has only $8$ vertices, fewer than our adaptive ones.
\end{enumerate}

\begin{remark}
	The value of $\vartheta$ appears to have no impact on the result. This is due to no eigenvectors \textit{surviving} the pruning procedure, likely due to the initial antinorm including the leading eigenvector of the s.l.p. $A_1$.
\end{remark}

\subsection{Random families of several matrices}\label{subsections:random}

The results, in the case of families including more than two matrices, are consistent with what we have observed so far: convergence to the pre-fixed accuracy $\delta$ is achieved in a reasonable time and, generally, s.l.p. are identified correctly. A few examples are detailed below:

\smallskip

\begin{enumerate}[label=(\alph*)]
	\item Let $\cF=\{A_1,A_2,A_3\}$, where each $A_i$ is a $50 \times 50$ matrix with all strictly positive entries. Applying Algorithm \textbf{(E)} with $\vartheta = 1.005$, yields:
	\[
	24.5734346483858 \le \check\rho (\cF) \le 24.5734346483858.
	\]
	The process is rather efficient, as suggested by the low computational time of about ten minutes and the performance metric (\cref{remark:alg_1}):
	\[
	\texttt{p} = (\ell_{slp},\ell_{opt},\texttt{n},\texttt{n}_{op},\texttt{J}_{max})= (1,6,6,1092,729).
	\]
	The algorithm identifies a candidate of degree $\ell_{slp}=1$ ($A_1$), which is confirmed to be a s.l.p. with the polytopic algorithm; consequently,
	\[
	\check \rho(\cF)=\rho(A_1) \approx 24.5734346483858.
	\]
	
	\item Let $\cF=\{A_1,A_2,A_3\}$, where each $A_i$ is a $200 \times 200$ matrix with all strictly positive entries. Applying Algorithm \textbf{ (E)} with $\vartheta = 1.005$ leads to
	\[
	100.094951113593 \le \check \rho(\cF) \le 100.095037706193,
	\]
	Therefore, convergence is achieved with absolute accuracy $\delta_{abs}=10^{-4}$ and relative accuracy $\delta=10^{-6}$. The metric performance is
	\[
	\texttt{p}= (\ell_{slp},\ell_{opt},\texttt{n},\texttt{n}_{op},\texttt{J}_{max}) = (1,6,6,1092,729)
	\]
	returning a unique s.l.p. candidate of degree one, $A_3$, that is confirmed to be a s.l.p. with the polytopic algorithm; thus,
	\[
	\check \rho(\cF) = \rho(A_3) \approx 100.095037706193.
	\]
	
	\item Let $\cF=\{A_1,A_2,A_3,A_4\}$ be a family of $150\times 150$ sparse matrices with sparsity density $\rho = 0.5$. Applying Algorithm \textbf{(E)} with $\vartheta=1.005$, yields:
	\[
	29.2101076342739 \le \check \rho(\cF) \le 29.2608364813336.
	\]
	Thus, the algorithm does not converge within the allowed number of antinorm evaluations to $\delta = 10^{-6}$. Nevertheless, the performance metric is
	\[
	\texttt{p}= (\ell_{slp},\ell_{opt},\texttt{n},\texttt{n}_{op},\texttt{J}_{max}) = (1,5,5,1364,1024).
	\]
	and there is a unique s.l.p. candidate of degree one, $\Pi := A_3$. The polytopic algorithm converges in four iterations and confirms that
	\begin{itemize}
		\item $\Pi$ is an actual s.l.p. for $\cF$;
		\item the extremal antinorm consists of $35$ vertices.
	\end{itemize}
\end{enumerate}

\section{Conclusive remarks} \label{sec:conclusion}

In this article, we have extended Gripenberg's algorithm for the first time to approximate the lower spectral radius for a class of families of matrices. This can be efficiently coupled with the polytope algorithm proposed in \cite{GP13} when an exact computation is needed. However, being significantly faster than the polytope algorithm, in applications where an approximation of the lower spectral radius suffices, our algorithm can replace the polytope algorithm.
We have analyzed several versions of the algorithm and in particular, beyond a standard basic formulation, we have considered and extensively experimented two main variations: an adaptive one, utilizing different antinorms, and a second one exploiting the knowledge of eigenvectors of certain products in the matrix semigroup.
Further variants might be successfully explored for specific kind of problems.
The algorithms are publically available and will hopefully be useful to the
community, particularly for their relevance in important applications in approximation theory and stability of dynamical systems.

Finally, we briefly discuss in the next subsection an adaptive Gripenberg's algorithm for the joint spectral radius computation. This extension of the original algorithm uses ideas similar to those proposed for approximating of the lower spectral radius. The obtained results indicate the advantages of using an adaptive strategy.

\subsection{Adaptive Gripenberg's algorithm for the JSR}\label{subsec:adaptive_norm_algorithm}

The adaptive procedure used in Algorithm {\bfseries (A)} suggests that a similar improvement can be implemented in Gripenberg's algorithm \cite{Gripenberg1996}. First, we recall a few definitions from \cite{GP13,GuglielmiZennaro2007,VagnoniZennaro2009}:

\begin{definition} \label{def:absco}
    A set $P \subset \mathbb C^d$ is a \textbf{balanced complex polytope} (b.c.p.) if there exists a minimal set of vertices $V=\{v_1,\ldots,v_p\}$ such that
    \[
    \operatorname{span}(V) = \mathbb C^d \qquad \text{and} \qquad P=\operatorname{absco}(V),
    \]
    where $\operatorname{absco}(\cdot)$ denotes the absolute convex hull. Moreover, a \textbf{complex polytope norm} is any norm $\| \cdot \|_P$ whose unit ball is a b.c.p. $P$.
\end{definition}

Complex polytope norms are dense in the set of all norms on $\mathbb C^d$. This property extends to the induced matrix norms (see \cite{GuglielmiZennaro2007}). Consequently, we have:
\[
\hat \rho(\cF) = \inf_{ \| \cdot \|_P } \max_{A \in \cF} \| A \|_P.
\]
Therefore, even though an extremal complex polytope norm for $\cF$ may not exist, the density property allows an arbitrarily close approximation of the JSR. 

The numerical implementation is similar to Algorithm {\bfseries (A)}, but there are two crucial differences: how to compute $\| \cdot \|_P$ and when to add new vertices. Indeed, if we consider $P=\operatorname{absco}(V)$, then $\|z\|_P$ is given by the solution to
\begin{equation}\label{eq:problem_norm}
\max t_0 \qquad \text{subject to } \begin{cases}
\sum_{x \in V} \alpha_x \text{Re}(x) - \beta_x \text{Im}(x) = t_0  \text{Re}(z)
\\[.4em] \sum_{x \in V} \beta_x \text{Re}(x) + \alpha_x \text{Im}(x) = t_0  \text{Im}(z)
\\[.4em] \sum_{x \in V} \sqrt{\alpha_x^2 + \beta_x^2} \le 1
\end{cases}
\end{equation}
This optimization problem can be solved in the framework of the conic quadratic programming; for more details, refer to \cite{GP13}. The decision to include $z$ in the vertex set relies on the same criterion of Algorithm {\bfseries (A)}, but with a reversed inequality. More precisely
    \begin{itemize}
        \item if $t_0 > 1$, then $z$ falls outside the polytope (so we discard it);
        \item if $t_0 \le 1$, then $z$ is incorporated into the vertex set.
    \end{itemize}

\subsection*{Example of the adaptive Gripenberg algorithm}\label{subsec:il_ex_jsr}

Consider the family:
\[
\cF = \frac15 \left\{  \begin{pmatrix} 3 & 0  \\ 1 & 3 \end{pmatrix}, \begin{pmatrix} 3 & -3  \\ 0 & -1 \end{pmatrix} \right\}.
\]
In \cite{Gripenberg1996}, numerical simulations (on MATLAB) with a maximum number of norm computations $M=52550$ yielded the following bound: $0.6596789 < \hat \rho( \cF) < 0.6596924$.

In this case, the algorithm considers products of length up to $243$ using the $2$-norm. It is also mentioned that increasing the value of $M$ does not improve the bound. 

Applying the adaptive Gripenberg algorithm with the $1$-norm and setting $M = 250$ yields (in about ten minutes) the following upper bound:
\[
\hat \rho(\cF) \le 0.659678900000002.
\]
This is obtained exploring products of length up to eight, however the optimal gap is achieved at $3$. Additionally, only three vertices are added to the initial $1$-norm. 

\begin{remark}
    The lower bound does not improve as the spectral radius does not benefit from this adaptive procedure.
\end{remark}

\clearpage

	\appendix

	\section{Polytopic antinorms} \label{supplement_sec:comp_antinorm}
	
	Let $\cF$ be a finite family of real non-negative $d \times d$ matrices sharing an invariant cone $K \subseteq \R_+^d$. Consider a polytope antinorm
	\[
	a(\cdot) : K \longrightarrow \R_+,
	\]
	and let $V = \{v_1,\ldots,v_p\}$ be a minimal vertex set for $a(\cdot)$. For any product $P \in \Sigma(\cF)$, we can compute its antinorm using the following formula:
	\begin{equation}\label{eq:supplement_antinorm}
		a(P) := \min_{1 \le i \le p} a(Pv_i).
	\end{equation}
	Thus, being able to calculate $a(z)$ efficiently for any $z$ is fundamental. As mentioned in \cref{subsec:adaptive_antinorm_algorithm}, this can be done by solving the LP problem \eqref{eq:lp_problem}. Since our algorithm relies on the \texttt{linprog} built-in MATLAB function, let us briefly summarize the key points and introduce some relavant notation for \cref{alg.2.1}:
	
	\begin{enumerate}[label=(\alph*)]
		\item \label{enum:a} The function \texttt{linprog} only solves LP problems of the form
		\[
		\min_{x \in \R^N} f^T x \qquad  \text{ subject to } \begin{cases}
			A x  \le b, \\
			A_{eq} x = b_{eq}, \\
			l_b \le x \le u_b.
		\end{cases}
		\]
		Therefore, to write \eqref{eq:lp_problem} in this form, we take $f=(1,0,\ldots,0)^T \in \R^{p+1}$ and we let $x$ be the vector $(c_0,c_1,\ldots,c_p) \in \R^{p+1}$. The inequality constraints
		\[
		c_0 z \ge \sum_{i=1}^p c_i v_i \qquad \text{and} \qquad \sum_{i=1}^p c_i \ge 1
		\]
		must be written as $A x \le b$. Hence, we take $b = (0,\ldots,0,-1)^T \in \R^{d+1}$ and define the augmented matrix:
		\begin{equation}\label{eq:matrix_A}
			A = \begin{pmatrix}
				-z_1 & | & \multicolumn{2}{c}{} & \\ 
				\vdots & | & \multicolumn{2}{c}{\vspace{\jot}V} & \\ 
				-z_d & | & \multicolumn{2}{c}{\vspace{\jot}} & \\ 
				\cline{1-5}
				0 & | & -1 & \ldots & -1
			\end{pmatrix} \in \R^{(d+1),(p+1)}.
		\end{equation}
		The last constraint, $c_i \ge 0$ for each $i$, is recovered by setting $l_b = (0,\ldots,0)^T \in \R^{p+1}$ and leaving $u_b$ empty to indicate that there are no upper bounds on the variables. Notice also that the optimization problem \eqref{eq:lp_problem} has no equality constraints, so we simply let $A_{eq}$ and $b_{eq}$ be empty.
		
		\item To better suit our problem, we customize some options of \texttt{linprog}:
		\begin{itemize}
			\item \texttt{algorithm}: The method used to find the solution. We use the {\itshape interior-point algorithm} since it works well with linear problems.
			\item \texttt{maxIter}: Maximum number of iterations allowed; in our case, we set $\max\{300,d\}$ as a reasonable choice for all dimensions.
			\item \texttt{tol}: Termination tolerance on the dual feasibility.
		\end{itemize}
		
		\item The implementation requires careful consideration, as it may occasionally fail to return a solution. To address this issue without interrupting the main algorithm (e.g., \cref{alg.1}), we can utilize the \texttt{exitflag} value returned by \texttt{linprog} as it provides information about the reason for termination.
	\end{enumerate}
	
	\begin{algorithm}[ht!]
		\caption{Evaluation of the polytope antinorm $a(\cdot)$ on a vector $z$}
		\label{alg.2.1}
		\begin{algorithmic}[1]
			\STATE{\textbf{Initial data}: $z \in K$ and $V= \begin{pmatrix} v_1 & \cdots & v_p\end{pmatrix}$ vertex matrix corresponding to $a(\cdot)$}
			\STATE{\textbf{Setting of the LP problem as in \ref{enum:a} above:}}
			\STATE{let $f = (1,0,\ldots,0)^T \in \R^{p+1}$, $b := (0,\ldots,0,-1)^T \in \R^{d+1}$, and denote by $A$ the augmented matrix given in \eqref{eq:matrix_A}}
			\STATE{set $A_{eq}, b_{eq}$ and $u_b$ empty and $l_b = (0,\ldots,0)^T \in \R^{p+1}$}
			\STATE{configure \texttt{linprog}'s options by setting $\texttt{tol}=10^{-10}$ and $\texttt{maxIter} = \max\{300,d\}$ \COMMENT{note that 'interior-point' is the default setting for \texttt{algorithm}}}
			\STATE{let $x$ be the solution obtained via \texttt{linprog} \COMMENT{for simplicity, assume that a solution is found; however, in the actual implementation, the value of \texttt{exitflag} should be checked to handle potential errors}}
			
			\IF{$x(1) = + \infty$}
			\STATE{set $a(z) = 0$}
			\ELSIF{$x(1) = 0$}
			\STATE{set $a(z) = +\infty$}
			\ELSE
			\STATE{set $a(z) = 1/x(1)$}
			\ENDIF		
			\RETURN $a(z)$
		\end{algorithmic}
	\end{algorithm}
	
	We can now exploit the formula \eqref{eq:supplement_antinorm} to compute the antinorm $a(P)$ of any given product matrix $P \in \Sigma(\cF)$. Furthermore, since it is used in the adaptive algorithm, \cref{alg.2.2} returns the candidate vertex $z:=P v_i$, where $v_i \in V$ achieves the minimum in \eqref{eq:supplement_antinorm}.
	
	\begin{remark}
		Algorithm \textbf{(A)}, detailed in \cref{supplement_sec:adaptive_alg}, incorporates new vertices immediately if a criterion is satisfied. As a result, \cref{alg.2.2} is applied to each product $P \in S_k$ with a potentially different polytope antinorm.
	\end{remark}
	
	\begin{algorithm}[ht!]
		\caption{Polytope antinorm evaluation at a matrix product $P \in \Sigma(\cF)$}
		\label{alg.2.2}
		\begin{algorithmic}[1]
			\STATE{\textbf{Initial data}: $P \in \Sigma(\cF)$ and $V= \begin{pmatrix} v_1 & \cdots & v_p\end{pmatrix}$ vertex matrix of $a(\cdot)$}
			\FOR{$i = 1$ to $p$}
			\STATE{compute the product $z_i = A \cdot v_i$ and its antinorm $a(z_i)$ with \cref{alg.2.1}}
			\STATE{store the value in an auxiliary vector $c(i) = a(z_i)$}
			\ENDFOR
			\STATE{set $a(P) = \min_{1 \le i \le p} c(i)$ and let $j$ be the index such that $a(P)=c(j)$.}
			\RETURN $a(P)$ and $z := P \cdot v_j$ \COMMENT{i.e., the potential new vertex}
		\end{algorithmic}
	\end{algorithm}

	\subsection{Vertices pruning} \label{supplementsec:pruning_vertices}
	
	As mentioned in \cref{subsec:adaptive_antinorm_algorithm}, at the end of each step of the adaptive algorithms, we have an augmented vertex set $V_{aux}$ that is likely not minimal. To address this issue, we employ a pruning procedure before the next algorithm step. However, in \cref{alg.2.2.1}, we need to be careful and avoid:
	\begin{enumerate}[label=(\roman*)]
		\item the accidental removal of non-redundant vertices;
		\item the same vertex appearing multiple times.
	\end{enumerate}
	
	\begin{algorithm}[ht!]
		\caption{Vertices pruning procedure}
		\label{alg.2.2.1}
		\begin{algorithmic}[1]
			\STATE{\textbf{Initial data}: $V= \begin{pmatrix} v_1 & \cdots & v_p\end{pmatrix}$ vertex matrix, $r=0$  \COMMENT{to determine when to stop the procedure} and $\texttt{tol}>0$ \COMMENT{used in the elimination criterion}}
			\WHILE{$r$ is equal to $0$}
			\STATE{let $n_v$ be the current number of vertices (i.e, columns) of $V$}
			\IF{$n_v$ is less than or equal to $1$}
			\STATE{exit the procedure}
			\ENDIF
			\STATE{set $r=1$ \COMMENT{this values is set to zero in the \textbf{for} cycle below if at least one vertex is eliminated; otherwise, it remains one and the \textbf{while} does not restart}}
			\FOR{$i = n_v, n_v-1, \ldots, 1$}
			\STATE{let $v_i$ denote the $i$-th column of $V$ and define $W_i$ as the matrix obtained by removing the $i$-th column from $V$}
			\IF{$\operatorname{rank}(W)$ is equal to $1$}
			\STATE{skip directly to the next index ($i \to i-1$)}
			\ENDIF
			\STATE{compute $a_{W_i}(v_i)$, where $a_{W_i}$ is the polytope antinorm with vertex set $W_i$}
			\IF{$a_{W_i}(v_i) \ge 1 + \texttt{tol}$}
			\STATE{set $r=0$ and remove the column $v_i$ from $V$, i.e. let $V = W_i$}
			\ENDIF
			\ENDFOR
			\ENDWHILE
			\STATE{identify all columns of $V$ that appear more than once and remove them}
			\RETURN $V$
		\end{algorithmic}
	\end{algorithm}
	
	In particular, we examine each $v_i$ in the vertex set $V$ by considering the polytope antinorm obtained by excluding $v_i$ (which we denote by $a_{W_i}$) and, ideally, whenever $v_i$ lies on the boundary of said polytope, removing it. However, to avoid removing non-redundant vertices due to numerical inaccuracies, we fix a small tolerance (\texttt{tol}) and use the following inequality as the exclusion criterion:
	\[
	a_{W_i}(v_i) \ge 1 + \texttt{tol}.
	\]
	Finally, we make a few remarks about \cref{alg.2.2.1}:
	\begin{itemize}
		\item The stopping criterion $r=1$ halts the procedure as soon as an entire \textbf{for} cycle does not remove any vertex.
		
		\item The procedure may be computationally expensive, as a different antinorm $a_{W_i}(\cdot)$ appears at each iteartion of the \textbf{for} cycle.
		
		\item The fixed tolerance \texttt{tol} should be chosen to be small to maximize the removal of redundant vertices. Setting $\texttt{tol} = 0$, while possible, is not recommended, as numerical inaccuracies may lead to the removal of important vertices.
	\end{itemize}

	\subsection{Identification of s.l.p. candidates} \label{supplementsec:identification_slp}
	
	Recall that a product $\Sigma \in \Sigma(\cF)$ of degree $k$ is a s.l.p. for $\cF$ if the following holds:
	\[
	\rho(\Sigma)^{1/k} = \inf_{k \ge 1} \min_{P \in \Sigma_k(\cF)} \rho(P)^{1/k}
	\]
	Referring to the notation of \cref{rmk:slp}, it suffices to calculate $\ell_{slp}$ and then select among products of such degree those minimizing $\rho(P)^{1/\ell_{slp}}$. The numerical implementation is given in \cref{alg.3}, but first, we summarize the strategy:
	\begin{enumerate}[label=(\arabic*)]
		\item Set $\ell_{slp}:=1$. Next, enter the main \textbf{while} loop of \cref{alg.1} (or any adaptive variant) and complete the step.
		
		\item If the new upper bound is \textbf{strictly} less than the previous value, set $\ell_{slp}$ equal to the current degree. If not, $\ell_{slp}$ remains unchanged.
		
		\item When the algorithm terminates (due to convergence or reaching the maximum number of antinorm evaluations), the value $\ell_{slp}$ coincides with the one given in \cref{rmk:slp}.
		
		\item Solve the minimization problem
		\[
		\min_{P \in \Sigma_{\ell_{slp}(\cF)}} \rho(P)^{1/\ell_{slp}},
		\]
		and apply the polytopic algorithm \cite{GP13} to all solutions to verify if they are s.l.p.
		
	\end{enumerate}
	
	This procedure can be integrated into any version of the algorithm, but performs better with the adaptive variants. Indeed, a fixed antinorm may lead \cref{alg.1} to evaluate a large number of products at each degree and, consequently, limit its ability to explore higher degrees within a reasonable time-frame. In contrast, the adaptive variants consider sets $S_k^\ast$ with typically smaller cardinality and, hence, take into account higher-degree products at the same computational cost. 
	
	There is a potential drawback to this approach: adaptive algorithms might inadvertently remove actual s.l.p.s, resulting in the wrong value for $\ell_{slp}$. Despite this concern, numerical simulations suggest that this is not a common issue. 
	
	\begin{remark}
		A concrete example comparing the performance of \cref{alg.1} and \textbf{(A)} and \textbf{(E)} in s.l.p. identification can be found in \cref{sec:il_ex}.
	\end{remark}
	
	The numerical implementation of this procedure requires only a few modifications to our algorithms. As such, we will only discuss the lines that should be changed or added, and refer to \cref{alg.1} or \cref{alg.supplement.a} for the complete picture.
	
	\begin{algorithm}[ht]
		\caption{Identification of s.l.p. candidates}
		\label{alg.3}
		\begin{algorithmic}[1]
			\STATE{\textbf{Let $\ell_{slp}$ be the value returned by either \cref{alg.1} or \cref{alg.supplement.a} and do the following:}}
			
			\IF{$\ell_{slp}$ is small enough}
			\STATE{solve the minimization problem $\min_{P \in \Sigma_{\ell_{slp}}(\cF)} \rho(P)^{1/\ell_{slp}}$}
			\ELSIF{$\ell_{slp}$ is large}
			\STATE{solve the minimization problem $\min_{P \in S_{\ell_{slp}}^\ast} \rho(P)^{1/\ell_{slp}}$ \COMMENT{the definition of $S_{\ell_{slp}}^\ast$ is given in \cref{prop:improvement}}}
			\ENDIF
			
			\RETURN all solutions (up to cyclic permutations) of the minimization problem
		\end{algorithmic}
	\end{algorithm}

	
	\section{Implementation of Algorithm (A)/(E)} \label{supplement_sec:adaptive_alg}
	
	We now present the numerical implementations of the adaptive variants of \cref{alg.1}: Algorithms \textbf{(A)} and Algorithm \textbf{(E)}. For the latter, we only highlight the main differences and omit the complete pseudocode due to their similarity.
	
	Let us examine the complete pseudocode of Algorithm \textbf{(A)}. We will maintain the notation from \cref{alg.1} whenever possible for clarity. 
	
	\begin{remark}
		The polytope antinorm is defined by its vertex set, which is refined throughout the algorithm. Thus, we use the notation $a_V(\cdot)$ to refer to the antinorm corresponding to the current vertex set $V$.
	\end{remark}
	
	\clearpage
	
	\begin{algorithm}[ht!]
		\caption{Adaptive Algorithm \textbf{(A)}}
		\label{alg.supplement.a}
		\begin{algorithmic}[1]
			\STATE{Set the initial bounds for the lower spectral radius: $L=0$ and $H=+\infty$}
			\STATE{Let $S_1 := \cF$ \COMMENT{to store matrices of degree one, following the notation of \cref{thm.main}} and set $m = \# S_1$ \COMMENT{number of elements in the family $\cF$}}
			\FOR{$i=1$ to $m$}
			\STATE{compute $a_V(A_i)$ with \cref{alg.2.2} and let $z_i$ be the candidate vertex}
			\STATE{set $l(i) = a_V(A_i)$ \COMMENT{the vector $l$ stores all candidates lower bounds in the current step} and $H = \min\{H, \rho(A_i)\}$ \COMMENT{update the upper bound}}
			\IF{$a_V(z_i) \le 1 + \texttt{tol}$}
			\STATE{add $z_i$ to the vertex set $V$, i.e. set $V=[V, z_i]$}
			\ENDIF
			\ENDFOR
			\STATE{compute the new lower bound as $L = \min_{1 \le i \le m} l(i)$ and prune the vertex set $V$ by applying \cref{alg.2.2.1}}
			\STATE{set $\texttt{n} = 1$ \COMMENT{current degree}, $\texttt{n}_{op} = m$ \COMMENT{number of antinorm evaluations}, $\texttt{J} = m$ \COMMENT{cardinality of $S_{\texttt{n}}$, coincides with $m$ after the first step}, and $\texttt{J}_{max} = \texttt{J}$ \COMMENT{keeps track of the maximum value of \texttt{J}}}
			\STATE{set $\ell_{opt}=1$ \COMMENT{i.e., the degree for which the gap between lower and upper bounds is optimal} and $\ell_{slp}=1$ \COMMENT{i.e., degree yielding the optimal upper bound}}
			\WHILE{$H - L \ge \delta \quad \&\& \quad \texttt{n}_{op} \le M$}
			\STATE{set $H_{old} = H$, $L_{old} = L$ and $\texttt{J}_{new} = 0$}
			\STATE{increase $\texttt{n}=\texttt{n}+1$ and initialize $S_{\texttt{n}} = [\cdot]$ \COMMENT{empty set to store matrix products of degree \texttt{n} for the next iteration, following the notation of \cref{thm.main}}}
			\FOR{$k = 1$ to $\texttt{J}$}
			\FOR{$i = 1$ to $m$}
			\STATE{let $X_k$ be the $k$-th element in $S_{\texttt{n}-1}$ \COMMENT{note: $S_{\texttt{n}-1}$ has cardinality \texttt{J}}}
			\STATE{set $q = \texttt{J}_{new} + 1$ and consider the product $Y := X_k A_i$}
			\STATE{compute $a_V(Y)$ and let $z_Y$ be the candidate vertex}
			\STATE{set $l_{new}(q) = \max\{ l(k), (a_V(Y))^{1/\texttt{n}}\}$ and $H := \min\{ H, (\rho(Y))^{1/\texttt{n}} \}$}
			\IF{$a_V(z_Y) \le 1 + \texttt{tol}$}
			\STATE{add $z_Y$ to the vertex set $V$, i.e. set $V=[V, z_Y]$}
			\ENDIF
			\IF{$l_{new}(q) < H - \delta$}
			\STATE{increase $\texttt{J}_{new} = \texttt{J}_{new} + 1$ and compute $L = \min\{L, l_{new}(q)\}$}
			\STATE{include $Y$ in the set $S_{\texttt{n}}$ \COMMENT{which will be used in the next iteration}}
			\ENDIF
			\ENDFOR
			\ENDFOR
			\STATE{compute $L = \max\{ L_{old}, \min\{ L, H - \delta \}\}$ and set $l := l_{new}$}
			\STATE{set $\texttt{n}_{op} = \texttt{n}_{op} + \texttt{J} \cdot m$, $\texttt{J} = \texttt{J}_{new}$ and $\texttt{J}_{max} = \max\{\texttt{J}, \texttt{J}_{max} \}$}
			\IF{$H - L < H_{old} - L_{old}$}
			\STATE{set $\ell_{opt} := \texttt{n}$ \COMMENT{store the optimal degree so far (for convergence)}}
			\ENDIF
			\IF{$H<H_{old}$}
			\STATE{set $\ell_{slp} := \texttt{n}$ \COMMENT{store the degree yielding the best upper bound so far}}
			\ENDIF
			\STATE{prune the current vertex set $V$ by applying \cref{alg.2.2.1}}
			\ENDWHILE
			
			\STATE \textbf{return} $\texttt{lsr}:=(L, H)$,
			$\texttt{p} = (\ell_{opt}, \ell_{slp}, \texttt{n}, \texttt{n}_{op}, \texttt{J}_{max})$ and $V$ \COMMENT{final vertex set}
		\end{algorithmic}
	\end{algorithm}
	
		\clearpage
	Algorithm \textbf{(E)}, on the other hand, has an additional input value: the scaling parameter $\vartheta$. The main changes to \cref{alg.supplement.a} are the following:
	\begin{itemize}
		\item Modify the first \textbf{for} cycle (lines 3--9). Specifically, store all matrices $A_j$ such that in line 5 we have
		\[
		\min\{H,\rho(A_j)\} = \rho(A_j).
		\] 
		After all elements in $\cF$ have been evaluated, before the pruning procedure starts (line 10), find the leading eigenvector $v_j$ of each $A_j$ stored previously and add to the vertex set $V$ the rescaling
		\begin{equation}\label{eq:riscaled}
			\widetilde v_j := v_j \left( a_V(v_j) \vartheta \right)^{-1}.
		\end{equation}
		
		\item The same strategy is employed in the main \textbf{while} loop. Specifically, store all matrix products such that
		\[
		\min\{H, (\rho(Y))^{1/\texttt{n}}\} = (\rho(Y))^{1/\texttt{n}},
		\] 
		and, before the pruning procedure in line $39$, add to $V$ all the leading eigenvectors rescaled as in \eqref{eq:riscaled}.
	\end{itemize}
	
	\begin{remark}
		An interesting improvement to this algorithm would be to develop a method for adaptively change the value of $\vartheta$ after each step. The idea is to find a compromise between two conflicting requirements: preventing drastic changes to the antinorm (i.e., $\vartheta$ not too large) and ensuring that the antinorm actually improves (i.e., $\vartheta>1$).
	\end{remark}

	\section{Output of Algorithm (A)} \label{supplement:simulation}
	
	We now describe in details the first two steps of Algorithm \textbf{(A)} applied to the example in \cref{subsec:il_ex}. Specifically, the input consists of the matrix family already rescaled, i.e.
	\[
	\widetilde\cF := \rho(\Pi)^{-1/8} \cdot \left\{ \begin{pmatrix}
		7 & 0 \\ 2 & 3
	\end{pmatrix}, \begin{pmatrix}
		2 & 4 \\ 0 & 8
	\end{pmatrix} \right\},
	\]
	where $\Pi := A_1A_2(A_1^2A_2)^2$. Fix the accuracy to $\delta=10^{-6}$ and the initial antinorm to $a_1(\cdot)$ (the $1$-antinorm), which corresponds to the vertex matrix
	\[
	V_{in}=\begin{pmatrix}
		1 & 0 \\ 0 & 1
	\end{pmatrix}.
	\]
	
	\subsection*{Step 1: degree-one products (lines 3--9 of \cref{alg.supplement.a})}
	
	The first step only considers the rescaled matrices $\tilde A_1$ and $\tilde A_2$ to compute the bounds:
	\begin{enumerate}[label=(\arabic*)]
		\item The matrix $\tilde A_1$, when rounded to four decimal digits, is given by
		\[
		\tilde A_1 \approx \begin{pmatrix}
			1.1649  &   0.3328 \\
			0  &  0.4992
		\end{pmatrix}.
		\]
		Its $1$-antinorm is $a_1(\tilde A_1) = 0.8320$, and the candidate vertex returned by \cref{alg.2.2} is $z = (0.3328,0.4992)^T$. Recall that $z$ is characterized by
		\[
		a_1(A_1)=a_1(A_1 z).
		\]
		Set $l(1) := a_1(\tilde A_1)$ (refer to line 6), compute the corresponding spectral radius $\rho(\tilde A_1) = 1.1649$ and update the upper bound accordingly:
		\[
		H=\min\{H,\rho(\tilde A_1)\}=\min\{+\infty,1.1649\}=1.1649.
		\]
		Store the matrix $\tilde A_1$ for the next step by setting $S_1=\{\tilde{A_1}\}$, and go to line 7. Since $a(z) = 0.8320$, we add $z$ to the current vertex set:
		\[
		V= \begin{pmatrix}V_{in} & z \end{pmatrix} = \begin{pmatrix}
			1 & 0 & 0.3328 \\ 0 & 1 & 0.4992
		\end{pmatrix}.
		\]
		
		\item The matrix $\tilde A_2$, rounded to four decimal places, is given by
		\[
		\tilde A_2 \approx \begin{pmatrix}
			0.3328    &     0 \\
			0.6656 &   1.3313
		\end{pmatrix}.
		\]
		Calculate its antinorm with respect to the new vertex set, $a(\tilde A_2) = 1.0528$, obtaining the new candidate vertex $z = (0.1108, 0.8861)^T$. Store its value by setting $l(2)=a(\tilde A_2)$, compute the spectral radius $\rho(\tilde A_2)= 1.3313$, and update the upper bound:
		\[
		H=\min\{H,\rho(\tilde A_2)\} = \min\{1.1649,1.3313\} = 1.1649,
		\]
		which, in this case, does not change. Next, store the matrix $\tilde A_2$ as follows:
		\[
		S_1= S_1 \cup \{\tilde A_2\} = \{\tilde A_1,\tilde A_2\},
		\]
		and proceed to line 7. The antinorm of $z$ is $a(z) = 1.0528$, so we do not add $z$ to the vertex set, which remains as before:
		\[
		V=\begin{pmatrix}
			1 & 0 & 0.3328 \\ 0 & 1 & 0.4992
		\end{pmatrix}.
		\]
	\end{enumerate}

	\subsection*{Pruning, lower bound and iterative parameters}
	
	In line 11, compute the lower bound after step one as $\min_i l(i)$, which in this case yields:
	\[
	L=\min\{l(1),l(2)\}=l(1)=0.8320.
	\]
	The pruning procedure applied to $V$ does not remove any vertex. Next, proceed to lines 13--14 and initialize the iteration parameters:
	\[
	\texttt{n}=1, \quad \texttt{n}_{op}=2, \quad \texttt{J}=\texttt{J}_{max}=2.
	\]
	Also set $\ell_{opt}=\ell_{slp}=1$ to keep track of the degree yielding the optimal gap and the best (smallest) upper bound respectively.
	
	\subsection*{Step $2$: degree-two products (lines 13--39)}
	
	Increase $\texttt{n}$ to $2$ and start the two nested \textbf{for} cycles:
	
	\begin{enumerate}[label=(\arabic*)]
		\item[(1a)] When $k=1$ and $i=1$, consider $\tilde A_1 \in S_1$ and set $Y=: \tilde A_1^2$, which rounded to the fourth decimal digit is given by
		\[
		Y \approx \begin{pmatrix}
			1.3569   & 0.5538\\
			0  &  0.2492
		\end{pmatrix}.
		\]
		Following line 20, compute the antinorm $a(Y) = 0.8869$ to obtain the candidate vertex $z = (0.5538,0.2492)^T$ and, then, set
		\[
		l_{new}(1)=\max\{l(1),a(Y)^{1/2}\} = a(Y)^{1/2} = 0.9418.
		\]
		Compute the spectral radius $\rho(Y) = .3569$, and note that the upper bound cannot improve since $\rho(A^2)=\rho(A)^2$; therefore,
		\[
		H=\min\{H,\rho(Y)^{1/2}\} = H = \rho(Y)^{1/2} = 1.1649.
		\]
		Next (lines 22--24), establish whether or not $z$ is added to the vertex set. In particular, since $a(z) = 0.8869$ is less than $1$, it is incorporated:
		\[
		V=\begin{pmatrix}
			1 & 0 & 0.3328 & 0.5538 \\ 0 & 1 & 0.4992 & 0.2492
		\end{pmatrix}.
		\]
		Finally (line 25), check the condition $l_{new}(1)<H-\delta$. Since it is satisfied, update the lower bound
		\[
		L=\min\{L,l_{new}(1)\}=l_{new}(1)=0.9418,
		\]
		and store $Y$ for the next step by setting $S_2=\{\tilde A_1^2\}$.
		
		\item[(1b)] When $k=1$ and $i=2$, set $Y := \tilde A_1 \tilde A_2$, which rounded to the fourth decimal digit is given by
		\[
		Y\approx \begin{pmatrix}
			0.6092  &  0.4431 \\
			0.3323 &   0.6646
		\end{pmatrix}.
		\]
		The antinorm is $a(Y) = 0.9778$ and the corresponding candidate vertex is $z = (0.4478,0.3497)^T$. Set, once again, the quantity
		\[
		l_{new}(2)=\max\{l(1),a(Y)^{1/2}\}  = a(Y)^{1/2} = 0.9888.
		\]
		The spectral radius of $Y$ is $\rho(Y) = 1.0216$ so, since there is an improvement, update the upper bound as follows:
		\[
		H=\min\{H,\rho(Y)^{1/2}\} = \rho(Y)^{1/2} = 1.0108.
		\]
		Next, establish whether or not $z$ is added to the vertex set. Since $a(z) = 0.9778$, the matrix $V$ is updated once again:
		\[
		V=\begin{pmatrix}
			1 & 0 & 0.3328 & 0.5538 & 0.4478 \\ 0 & 1 & 0.4992 & 0.2492 & 0.3497
		\end{pmatrix}.
		\]
		Finally, the \textbf{if} condition in line 25 is satisfied; hence, compute the new lower bound as
		\[
		L=\min\{L,l_{new}(2)\}=l_{new}(2)=0.9418,
		\]
		and store $Y$ in $S_2$ by setting $S_2=\{\tilde A_1^2,\tilde A_1 \tilde A_2\}$.
		
		\item[(2a)] When $k=2$ and $i=1$, consider $\tilde A_2 \in S_1$ and set $Y=\tilde A_2 \tilde A_1$, which rounded to the fourth decimal digit is given by
		\[
		Y \approx \begin{pmatrix}
			0.3877 &   0.1108 \\
			0.7754  &  0.8861
		\end{pmatrix}.
		\]
		The antinorm of $Y$ is $a(Y) = 0.9766$, while the corresponding candidate vertex is $z = (0.2123,0.6571)^T$. Set the quantity:
		\[
		l_{new}(3)=\max\{l(2),a(Y)^{1/2}\} = l(2)=1.0528.
		\]
		Calculate the spectral radius $\rho(Y) = 1.0216$ and notice that the upper bound does not change since $\rho(AB)=\rho(BA)$:
		\[
		H=\min\{H,\rho(Y)^{1/2}\} = H = \rho(Y)^{1/2} = 1.0108.
		\]
		Next establish whether or not $z$ is added to the vertex set. Since $a(z) = 0.9766$ is less than $1$, the matrix $V$ is updated:
		\[
		V=\begin{pmatrix}
			1 & 0 & 0.3328 & 0.5538 & 0.4478 & 0.2123 \\ 0 & 1 & 0.4992 & 0.2492 & 0.3497 & 0.6571
		\end{pmatrix}.
		\]
		However, the condition in line 25 is not satisfied ($l_{new}(3)>H-\delta$), which means that the product $Y$ is discarded and both the lower bound $L$ and the set $S_2$ do not change.
		
		\item[(2b)] When $k=2$ and $i=2$, set $Y := \tilde A_2^2$, which rounded to the fourth decimal digit is given by
		\[
		Y \approx \begin{pmatrix}
			0.1108  &      0 \\
			1.1077  &  1.7723
		\end{pmatrix}.
		\]
		Compute $a(Y) = 1.1542$, the candidate vertex $z = (0.0613,1.0552)^T$, and set the quantity:
		\[
		l_{new}(4)=\max\{l(2),a(Y)^{1/2}\} =a(Y)^{1/2} = 1.0743.
		\]
		The spectral radius is $\rho(Y)=1.7723$, so it does not improve the upper bound:
		\[
		H=\min\{H,\rho(Y)^{1/2}\} = H = 1.0108.
		\]
		Next, since $a(z) = 1.1542$, the candidate vertex is discarded. Furthermore, the condition in line 25 is not satisfied ($l_{new}(4)>H-\delta$), so $Y$ is discarded and both $L$ and $S_2$ remain unchanged.
	\end{enumerate}
	
	\subsection*{Update of iteration parameters and conclusion}
	
	Only two products of degree two survived this step since, following the notation of \cref{thm.main}, we have
	\[
	S_2=\{\tilde A_1^2,\tilde A_1 \tilde A_2 \}.
	\]
	This set is used to generate products of degree three, meaning that $S_3$ may have at most $4$ elements instead of $3^2=9$. Next, update the lower bound (line 31) as
	\[\begin{aligned}
		L & = \min\{L_{old},\min\{L,H-\delta\}\} 
		\\ & = \min\{0.8320, \min\{0.9418,  1.0108-10^{-6}\}\} = 0.9418.
	\end{aligned}\]
	Update the iteration parameters: $\texttt{n}_{op}=6$, $\texttt{J}=2=\texttt{J}_{max}$ and, finally, since both the gap and the upper bound improved, we set
	\[
	\ell_{slp}=\ell_{opt}=\texttt{n}=2.
	\]
	The vertex pruning (line 39) at the end of the step yields
	\[
	V=\begin{pmatrix}
		1 & 0 & 0.3328 & 0.5538 & 0.4478 & 0.2123 \\ 0 & 1 & 0.4992 & 0.2492 & 0.3497 & 0.6571
	\end{pmatrix},
	\]
	which means that once again no vertex is removed during the procedure.

\SkipTocEntry\section*{Acknowledgments}

The code used to obtain the data in this paper is provided in a repository at \url{https://github.com/francescomaiale/subradius_computation}

Nicola Guglielmi and Francesco Paolo Maiale acknowledge that their research was supported by funds from the Italian 
MUR (Ministero dell'Universit\`a e della Ricerca) within the 
PRIN 2022 Project ``Advanced numerical methods for time dependent parametric partial differential equations with applications''. Nicola Guglielmi acknowledges the support of the Pro3 joint project entitled
``Calcolo scientifico per le scienze naturali, sociali e applicazioni: sviluppo metodologico e tecnologico''.
He is also affiliated to the INdAM-GNCS (Gruppo Nazionale di Calcolo Scientifico).

\bibliographystyle{siam} 
\bibliography{references.bib}

\end{document}